\documentclass[12pt]{article}
\usepackage[utf8]{inputenc}

\usepackage{amsmath,amssymb,amsthm} 
\usepackage{hyperref} 
\usepackage{xcolor} 
\usepackage{tikz} 
\usepackage{mathdots} 
\usepackage{tikz-cd} 

\usepackage{graphicx}

\newdimen\R
\R=0.8cm

\oddsidemargin -.05in
\evensidemargin -.05in
\textwidth 6.6in
\topmargin -.5in
\textheight 9in

\newcommand{\Z}{\mathbb{Z}}

\newcommand{\Q}{\mathbb{Q}}

\newcommand{\Orb}{\mathcal{O}}

\newcommand{\fpq}{\frac{p}{q}}

\newtheorem{definition}{Definition}
\newtheorem{lemma}{Lemma}
\newtheorem{theorem}{Theorem}

\newtheorem{cor}{Corollary}

\newtheorem{prop}{Proposition}
\theoremstyle{remark}
\newtheorem{example}{Example}
\newtheorem{remark}{Remark}

\title{A Generalization of Markov Numbers}
\author{Esther Banaian and Archan Sen}
\date{}

\begin{document}

\maketitle

\abstract{We explore a generalization of the Markov numbers that is motivated by a specific generalized cluster algebra arising from an orbifold, in the sense of Chekhov and Shapiro. We give an explicit algorithm for computing these generalized Markov numbers and exhibit several patterns analogous to those that appear within the ordinary Markov numbers. Along the way, we present formulas related to continued fractions and snake graphs. }

\section{Introduction}

This article concerns a generalization of Markov numbers. Markov numbers appear in tuples which are solutions to a certain Diophantine equation. 

\begin{definition}\label{def:Markov}
A \emph{Markov tuple} is a tuple, $(a,b,c)$, of positive integers which satisfy $a^2 + b^2 + c^2 = 3abc$.  A number which appears in at least one Markov triple is called a \emph{Markov number}.  We call $x^2 + y^2 + z^2 = 3xyz$ the \emph{Markov equation.}
\end{definition}

Markov tuples and numbers first appeared in Markov's theorem in \cite{markoff1879formes}. They have remained of interest to mathematicians ever since, in no small part due to Frobenius' famous Uniqueness Conjecture which remains open to this day \cite{frobenius1913markoffschen}. This conjecture states that each Markov number is the largest number in a \emph{unique} Markov triple. For a history of work on this conjecture, see \cite{aigner1999proofs}.

The tuple $(1,1,1)$ is a natural first example of a Markov tuple. We can construct more Markov tuples by the following observation. Given a Markov tuple $(a,b,c)$, we can replace $c$ with $\frac{a^2 + b^2}{c}$ to obtain a distinct Markov tuple, $(a,b, \frac{a^2 + b^2}{c})$. This process is an example of \emph{Vieta jumping}. It is clear that $\frac{a^2 + b^2}{c}$ is positive, and since $3ab - c^2 = \frac{a^2 + b^2}{c}$, we also know this new number is an integer. Note that this process is an involution, and that we could similarly replace $a$ or $b$ with this method.
Furthermore, one can show that every Markov tuple is the result of applying a sequence of Vieta jumping to the Markov tuple $(1,1,1)$. 
A complete proof can be found in \cite{baragar1994integral}.

The process of going from a Markov tuple $(a,b,c)$ to another of the form $(a,b,\frac{a^2 + b^2}{c})$ is reminiscent of mutation in a cluster algebra. In \cite{beineke2011cluster} and \cite{propp2005combinatorics}, the respective authors explain the connection between Markov numbers and the cluster algebra arising from a once-punctured torus. Every triangulation of a once-punctured torus has three arcs and has adjacency quiver $Q_T$ (known as the Markov quiver), given below. 

\begin{center}
\begin{tikzcd}
 & 2 \arrow[dr, shift left=.75ex] \arrow[dr] & \\
1 \arrow[ur, shift left=.75ex] \arrow[ur] && 3 \arrow[ll, yshift=.75ex] \arrow[ll]\\
\end{tikzcd}
\end{center}

More precisely, Markov numbers exactly correspond to the cluster variables in the cluster algebra from the Markov quiver when we set all initial cluster variables to 1. Since this cluster algebra arises from a surface, we can also interpret Markov numbers as the number of perfect matchings of \emph{snake graphs}, as were defined in \cite{MSW}.  Recall that a perfect matching of a graph $G = (V,E)$ is a subset of the edge set $P \subseteq E$ such that every vertex is incident to exactly one edge in $P$.  The snake graphs which correspond to Markov numbers were studied in detail in \cite{CS}.

In this paper, we study solutions to a Diophantine equation inspired by the Markov equation and its connection to the theory of cluster algebras. We refer to our equation as the \emph{generalized Markov equation} although there are many other interesting ways to generalize the equation. 

\begin{definition}\label{def:GenMarkov}
The \emph{generalized Markov equation} is \[
x^2 + y^2 + z^2 + xy + xz + yz = 6xyz.
\] 
A \emph{generalized Markov tuple} is a tuple of positive integers $(a,b,c)$ satisfying the generalized Markov equation. If $m$ is an element of at least one generalized Markov tuple, we call $m$ a \emph{generalized Markov number}.
\end{definition}

Generalized Markov tuples and numbers were studied by Gyoda in \cite{Gyoda} and in a broader context by Gyoda and Matsushita in \cite{GM}.
In particular, it is shown in \cite{Gyoda} that all generalized Markov tuples can be reached from the tuple of $(1,1,1)$ by exchanges similar to those used for the ordinary Markov equation.

\begin{theorem}[\cite{Gyoda}, Theorem 1.1]\label{thm:Gyoda}
Every generalized Markov tuple can be reached from the tuple $(1,1,1)$ by a sequence of exchanges of the form $(a,b,c) \to (a,b,\frac{a^2 + ab + b^2}{c})$.
\end{theorem}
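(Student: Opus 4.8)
The plan is to adapt the classical Vieta jumping descent for the ordinary Markov equation to the form $F(x,y,z) = x^2+y^2+z^2+xy+xz+yz - 6xyz$. The first ingredient is the observation that makes the exchange well-defined. Fixing $a$ and $b$ and viewing $F(a,b,z)=0$ as the quadratic $z^2 + (a+b-6ab)\,z + (a^2+ab+b^2)=0$ in $z$, its two roots $c$ and $c'$ satisfy $c+c' = 6ab-a-b$ and $cc' = a^2+ab+b^2$. Hence if $(a,b,c)$ is a generalized Markov tuple, then $c' = 6ab-a-b-c$ is an integer and $c' = (a^2+ab+b^2)/c$ is positive, so $(a,b,c')$ is again a generalized Markov tuple; moreover the exchange is an involution, and by the symmetry of the equation it may be performed on any of the three coordinates (equivalently, one reorders the tuple first).

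Second, I would run a descent on the positive integer $a+b+c$ (equivalently, on $\max(a,b,c)$): the claim is that every generalized Markov tuple other than $(1,1,1)$ admits an exchange strictly decreasing this quantity. After reordering so that $a \le b \le c$, I will exchange the largest entry $c$, so it suffices to show $c' < c$, that is, that $c$ is the strictly larger of the two roots of $f(z) = z^2 + (a+b-6ab)z + (a^2+ab+b^2)$.

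The heart of the argument is to evaluate $f$ at the middle coordinate. A short computation gives $f(b) = a^2 + 2ab + 3b^2 - 6ab^2$, and since $a \ge 1$,
\[
f(b) = (a-b)(a+3b) - 6b^2(a-1) \le (a-b)(a+3b) \le 0,
\]
with equality in both inequalities exactly when $a=b=1$. If $(a,b) \ne (1,1)$, then $f(b) < 0$, so $b$ lies strictly between the two (necessarily real) roots of $f$; since $c$ is a root with $c \ge b$, it must be the larger one, whence $c' < b \le c$ and in particular $c' < c$. If $(a,b)=(1,1)$, the equation reduces to $c^2 - 4c + 3 = 0$, so $c \in \{1,3\}$, and as the tuple is not $(1,1,1)$ we get $c = 3$ with $c' = 1 < 3$. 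In every case the exchange at the largest coordinate strictly decreases $a+b+c$.

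Finally, since $a+b+c$ is a positive integer strictly decreasing at each step, the process terminates, and by the previous paragraph it can only terminate at $(1,1,1)$; reading the sequence of exchanges backwards (each being an involution) then writes the original tuple as the result of exchanges of the stated form applied to $(1,1,1)$. I expect the only delicate points to be the exceptional tuple with $(a,b)=(1,1)$ — where the generic strict inequality $f(b) < 0$ degenerates and a direct argument is needed — and confirming that $(1,1,1)$ is the unique tuple admitting no descending exchange; the bookkeeping around these boundary cases, rather than any individual computation, is the main thing to get right.
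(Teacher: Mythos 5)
Your proposal is correct: the Vieta relations $c+c'=6ab-a-b$, $cc'=a^2+ab+b^2$ make the exchange well-defined, your evaluation $f(b)=(a-b)(a+3b)-6b^2(a-1)\le 0$ (strict unless $a=b=1$) correctly forces $c$ to be the larger root so the exchange at the maximum strictly decreases $a+b+c$, and the boundary case $(1,1,3)\to(1,1,1)$ is handled. Note that the paper itself gives no proof of this statement --- it is quoted from Gyoda's paper --- and your argument is exactly the standard Vieta-jumping descent that the paper sketches for ordinary Markov triples in the introduction and that the cited source follows, so there is nothing methodologically different to flag.
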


The form of these exchanges resembles mutation in a \emph{generalized cluster algebra}, in the sense of Chekhov and Shapiro \cite{Chekhov}. The relevant generalized cluster algebra, $\mathcal{A}_3$, arises from a once-punctured sphere with three orbifold points, which we will denote $\mathcal{O}_3$. In parallel to the case of ordinary Markov numbers, generalized Markov numbers are given by generalized cluster variables in $\mathcal{A}_3$ when we specialize the initial cluster variables to 1. By following the construction of snake graphs from orbifolds in \cite{BK}, we can again interpret these generalized Markov numbers as the number of perfect matchings of snake graphs. This is the perspective we will take.

In Section \ref{sec:background}, we briefly give some of the background needed to explore our results. This includes discussion of snake graphs, continued fractions, and the labeling of ordinary Markov numbers via rational numbers $q$ such that $0 \leq q \leq 1$.
We will not define cluster algebras as the main results can be given without reference to cluster algebras. We direct a reader instead to the original papers on ordinary  \cite{fz} and generalized cluster algebras \cite{Chekhov} as well as a survey on ordinary cluster algebras by Glick and Rupel \cite{survey}.

 Our first main result is an algorithm to compute the number of perfect matchings of these snake graphs via continued fractions. The algorithm is outlined in Section \ref{sec:algorithm}, with the proof that it gives the correct continued fraction given in Theorem \ref{thm:OrdMarkovContFrac}. We use properties of these continued fractions to give both recurrences and growth behavior for certain sequences within the set of all generalized Markov numbers in Section \ref{sec:Patterns}.  The generalized Markov numbers correspond to arcs (with no self-intersection) on the orbifold associated to $\mathcal{A}$; in Section \ref{sec:extendalgorithm}, we consider generalized arcs (i.e. those with possibly self-intersection) and closed curves on this orbifold. Our final result in this section is Theorem \ref{thm:GenMarkovContFrac}, which gives recurrences on this extended family of numbers. In order to compute this recurrence, we provide Proposition \ref{prop:BandMatchings}, which gives a formula for computing the number of perfect matchings of a \emph{band graph}, and Theorem \ref{thm:BandMarkov}, which computes the number of good matchings of certain band graphs coming from the orbifold $\mathcal{O}_3$.

\section{Background}\label{sec:background}

\subsection{Labeling Markov Numbers with Rational Numbers}\label{subsec:Label}

There is a convenient labeling of Markov numbers larger than 1 using rational numbers in the interval $(0,1]$. One way to illustrate this labeling is by viewing triangulations and arcs on the universal cover of the once-punctured torus, $\mathbb{Z}^2$.  If our initial triangulation is $T_0 = \{\tau_1,\tau_2,\tau_3\}$,  with $\tau_2$ following $\tau_1$ in clockwise order, then we can lift $\tau_1$ to all line segments of the form $y = -x + a$, $\tau_2$ to all line segments of the form $x = b$, and $\tau_3$ to all line segments of the form $y = c$ for $a,b,c \in \mathbb{Z}$, where we consider segments between two consecutive lattice points. We will say these lines, and the arcs in the once-punctured torus which they represent, have slopes $\frac{-1}{1}, \frac10, \frac{0}{1}$ respectively, and we descriptively rename them  $\tau_{\frac{-1}{1}}, \tau_{\frac{1}{0}}, \tau_{\frac{0}{1}}$.

\begin{center}
\begin{tikzpicture}
\draw (0,0) -- (0,3);
\draw (1,0) -- (1,3);
\draw (2,0) -- (2,3);
\draw (3,0) -- (3,3);
\draw(0,0) -- (3,0);
\draw(0,1) -- (3,1);
\draw(0,2) -- (3,2);
\draw(0,3) -- (3,3);
\draw (3,0) -- (0,3);
\draw(2,0) -- (0,2);
\draw(1,0) -- (0,1);
\draw (3,1) -- (1,3);
\draw (3,2) -- (2,3);
\end{tikzpicture}
\end{center}

 We associate our initial triangulation $\{\tau_{\frac{-1}{1}},\tau_{\frac{1}{0}},\tau_{\frac{0}{1}}\}$ to the Markov tuple $(1,1,1)$. We can apply Vieta jumping to any number in this Markov tuple to reach $(1,1,2)$. For the arcs, we pick the convention that we flip $\tau_{\frac{-1}{1}}$. The flip of $\tau_{\frac{-1}{1}}$ will have slope $+1$ in the cover. Thus, we have that the Markov number labeled by $\frac{1}{1}$, $n_{\frac{1}{1}}$, is 2.

\begin{center}
\begin{tikzpicture}
\draw (0,0) -- (0,3);
\draw (1,0) -- (1,3);
\draw (2,0) -- (2,3);
\draw (3,0) -- (3,3);
\draw(0,0) -- (3,0);
\draw(0,1) -- (3,1);
\draw(0,2) -- (3,2);
\draw(0,3) -- (3,3);
\draw (3,0) -- (0,3);
\draw(2,0) -- (0,2);
\draw(1,0) -- (0,1);
\draw (3,1) -- (1,3);
\draw (3,2) -- (2,3);
\draw[red] (0,0) to (1,1);
\end{tikzpicture}
\end{center}

Since Vieta jumping is an involution, at the Markov tuple $(1,1,2)$ we must apply Vieta jumping to one of the entries of $1$,  reaching $(1,2,5)$. In the triangulation $\{\tau_{\frac{1}{1}},\tau_{\frac{1}{0}},\tau_{\frac{0}{1}}\}$, we pick the convention that we will flip $\tau_{\frac{1}{0}}$, which was associated with lines of slope $\frac{1}{0}$. The resulting new arc will have slope $\frac12$. From this we have $n_{\frac{1}{2}} = 5$. 

\begin{center}
\begin{tikzpicture}
\draw (0,0) -- (0,3);
\draw (1,0) -- (1,3);
\draw (2,0) -- (2,3);
\draw (3,0) -- (3,3);
\draw(0,0) -- (3,0);
\draw(0,1) -- (3,1);
\draw(0,2) -- (3,2);
\draw(0,3) -- (3,3);
\draw(0,0) -- (3,3);
\draw (3,2) -- (1,0);
\draw(3,1) -- (2,0);
\draw(2,3) -- (0,1);
\draw (1,3) -- (0,2);
\draw[red] (0,0) to (2,1);
\draw[xshift = 5\R] (0,0) -- (0,3);
\draw[xshift = 5\R] (1,0) -- (1,3);
\draw[xshift = 5\R] (2,0) -- (2,3);
\draw[xshift = 5\R] (3,0) -- (3,3);
\draw[xshift = 5\R](0,0) -- (3,0);
\draw[xshift = 5\R](0,1) -- (3,1);
\draw[xshift = 5\R](0,2) -- (3,2);
\draw[xshift = 5\R](0,3) -- (3,3);
\draw[xshift = 5\R] (3,0) -- (0,3);
\draw[xshift = 5\R](2,0) -- (0,2);
\draw[xshift = 5\R](1,0) -- (0,1);
\draw[xshift = 5\R] (3,1) -- (1,3);
\draw[xshift = 5\R] (3,2) -- (2,3);
\draw[red, xshift = 5\R] (0,0) to (2,1);
\end{tikzpicture}
\end{center}

At this point, if we continue to flip arcs in the torus and do not flip the same arc two times in a row, their lifts will always have slope less than 1. This is a consequence of the following lemma.

\begin{lemma}\label{lem:SlopeArcs}
\begin{enumerate}
\item The set of slopes of each triangulation of the once-punctured torus are of the form $\{ \frac{a}{c}, \frac{a+b}{c+d}, \frac{b}{d}\}$.
\item Mutation has the following effect on the slopes of a triangulation \[\{ \frac{a}{c}, \frac{a+b}{c+d}, \frac{b}{d}\} \to \{ \frac{(a+b) + b}{(c+d) + d}, \frac{a+b}{c+d}, \frac{b}{d}\}\]
\end{enumerate}
\end{lemma}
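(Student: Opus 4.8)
The plan is to work in the universal cover $\Real^2$ of the once-punctured torus, where a triangulation lifts to a $\Z^2$-periodic triangulation of the plane whose vertex set is exactly $\Z^2$ (the preimages of the puncture). After isotoping the three arcs to geodesics of the flat metric, each arc lifts to a family of parallel segments joining consecutive lattice points and hence acquires a well-defined rational (or infinite) slope; so a triangulation has exactly three slopes, one per arc. I will prove (1) directly from this picture and (2) by analyzing the local configuration at the flipped arc.

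For part (1): an Euler characteristic count gives that a triangulation of the once-punctured torus has three arcs and two triangles, and each triangle has all three arcs among its sides (there are no self-folded triangles). A lifted triangle has area $\tfrac12$, i.e.\ it is unimodular, so if $e_1=(c,a)$ and $e_2=(d,b)$ are two of its edge vectors --- with signs chosen so that the third edge vector is $-(e_1+e_2)=-(c+d,a+b)$ --- then $|\det(e_1,e_2)|=|cb-ad|=1$. The three edge directions therefore have slopes $\tfrac ac,\ \tfrac bd,\ \tfrac{a+b}{c+d}$, which is the claimed form (and the Farey relation $|cb-ad|=1$ comes along for free, which is convenient below).

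For part (2): flip the arc $\tau$ of slope $\tfrac ac$, lifted to the segment with endpoints $0$ and $e_1=(c,a)$; let $e_2=(d,b)$ be a lift of the arc of slope $\tfrac bd$, so the remaining arc has direction $\pm(e_1+e_2)$. The triangle on either side of this segment is unimodular and has its two other edges among lifts of the other two arcs; writing its apex as a lattice point $p$, this forces both $p$ and $p-e_1$ to lie in $\{\pm e_2,\ \pm(e_1+e_2)\}$, and a short check of the finitely many cases leaves only $p=-e_2$ on one side and $p=e_1+e_2$ on the other. Hence the two triangles meeting along $\tau$ are those with vertices $\{0,e_1,-e_2\}$ and $\{0,e_1,e_1+e_2\}$, and their union is the parallelogram with vertices $0,\,-e_2,\,e_1,\,e_1+e_2$ in cyclic order, with $\tau$ as one diagonal. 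Flipping replaces $\tau$ by the other diagonal, which joins $-e_2$ to $e_1+e_2$, of direction $e_1+2e_2=(c+2d,\ a+2b)$ and slope $\tfrac{a+2b}{c+2d}=\tfrac{(a+b)+b}{(c+d)+d}$; the other two arcs are untouched, giving the stated transformation. (As an alternative to the geometric proof of (1), one can instead induct on flip sequences from the base triangulation $\{\tau_{-1/1},\tau_{1/0},\tau_{0/1}\}$ using (2), since every triangulation of the once-punctured torus is reached from $T_0$ by flips; I prefer the direct argument as it also explains the Farey condition.)

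The main obstacle is the sign and labeling bookkeeping in part (2): correctly identifying the two triangles adjacent to $\tau$ and choosing the representative vectors so that the new slope emerges as the mediant $\tfrac{(a+b)+b}{(c+d)+d}$ --- and not, say, $\tfrac{b-a}{d-c}$, which is what flipping the middle arc $\tau_{(a+b)/(c+d)}$ would produce. A subsidiary point that should be stated carefully is the reduction to geodesic representatives, so that each arc really does lift to a parallel family carrying a single slope.
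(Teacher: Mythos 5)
The paper never actually proves Lemma \ref{lem:SlopeArcs}: it is stated bare and immediately followed by the remark that such triples form the Farey tessellation, so the only thing to compare against is the informal universal-cover picture set up in Section \ref{subsec:Label}. Your proposal is a correct, self-contained proof of precisely that picture, and it fills a gap the paper leaves to the reader. Part (1) is sound: with the triangulation straightened and lifted to a lattice triangulation with vertex set $\mathbb{Z}^2$, each triangle is a lattice triangle with no extra boundary or interior lattice points, hence of area $\tfrac12$ (Pick's theorem, or the fact that two lattice triangles of area at least $\tfrac12$ each tile a fundamental domain of area $1$), and the cyclic edge vectors $e_1$, $e_2$, $-(e_1+e_2)$ give slopes $\tfrac{a}{c}$, $\tfrac{b}{d}$, $\tfrac{a+b}{c+d}$. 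In part (2) your finite case check is right: of the candidate apexes $\pm e_2$, $\pm(e_1+e_2)$, only $p=-e_2$ and $p=e_1+e_2$ satisfy $p-e_1\in\{\pm e_2,\pm(e_1+e_2)\}$, they lie on opposite sides of the lift of $\tau$, the union is the parallelogram with sides $e_2$ and $e_1+e_2$, and the other diagonal has direction $e_1+2e_2=(c+2d,a+2b)$, i.e. slope $\tfrac{(a+b)+b}{(c+d)+d}$; you also correctly identify that the lemma describes the flip of the arc of slope $\tfrac{a}{c}$ (consistent with the paper's convention of never flipping the same arc twice in a row), and your aside that flipping the middle arc instead yields $\tfrac{b-a}{d-c}$ matches the involutive exchange the paper later uses in the proof of Theorem \ref{thm:GenMarkovContFrac}. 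The ingredients you invoke without proof --- three arcs and two triangles by an Euler count, no self-folded triangles so each triangle has the three distinct arcs as sides, and simultaneous straightening of the arcs to geodesics --- are standard and are exactly what the paper tacitly assumes; in a final write-up a sentence justifying the straightening step (which you rightly flag) would make the argument airtight.
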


The operation of combining $\frac{a}{c}$ and $\frac{b}{d}$ to $\frac{a+b}{c+d}$ is referred to as a \emph{Farey sum}. Triangles of the form mentioned in Lemma \ref{lem:SlopeArcs} form the \emph{Farey tesselation} of the upper-half plane. 

We display the exchange trees for Markov numbers and rational numbers with respect to Vieta jumping and the Farey sum respectively in Figure \ref{fig:MarkovAndQTrees}.  For example, we can see that $n_{\frac23} = 29$ by noting the positions where $\frac23$ and $29$ first appear in each tree. {\c{C}}anak{\c{c}}{\i} and Schiffler discuss a combinatorial way to compute the Markov number associated to each rational number in \cite{CS}.  

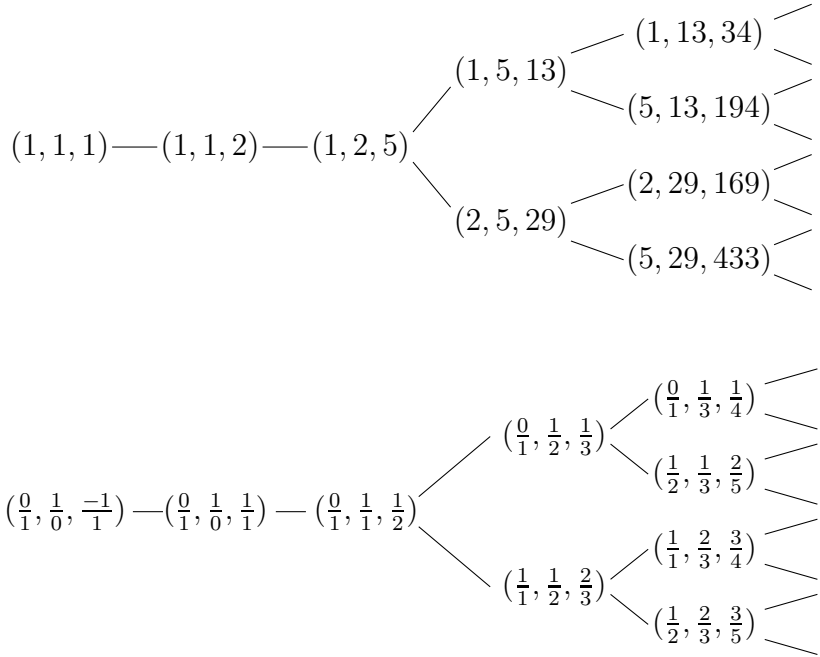
\begin{figure}
\centering
\begin{tikzpicture}
\node[] at (-1,0){$(1,1,1)$};
\node[] at (1,0){$(1,1,2)$};
\node[] at (3,0){$(1,2,5)$};
\node[] at (5,1){$(1,5,13)$};
\node[] at (5,-1){$(2,5,29)$};
\node[] at (7.5,1.5){$(1,13,34)$};
\node[] at (7.5,0.5){$(5,13,194)$};
\node[] at (7.5,-0.5){$(2,29,169)$};
\node[] at (7.5,-1.5){$(5,29,433)$};
\draw (0.7-1,0) -- (1.3-1,0);
\draw(2.7-1,0) -- (3.3-1,0);
\draw (4.2-.5,0.2) -- (5.2-1,0.8);
\draw(4.2-.5,-0.2) -- (5.2-1,-0.8);
\draw(5.8,1.25) -- (6.5,1.5);
\draw(5.8,0.75) -- (6.5,0.5);
\draw(5.8,-0.75) -- (6.5,-0.5);
\draw(5.8,-1.25) -- (6.5, -1.5);
\draw(8.5,1.7) -- (9,1.9);
\draw(8.5,1.3) -- (9,1.1);
\draw (8.5,0.7) -- (9,0.9);
\draw(8.5,0.3) -- (9,0.1);
\draw(8.5,-0.3) -- (9,-0.1);
\draw(8.5,-0.7) -- (9,-0.9);
\draw(8.5,-1.3) -- (9,-1.1);
\draw(8.5,-1.7) -- (9,-1.9);
\end{tikzpicture}

\vspace{1cm}

\begin{tikzpicture}
\node[] at (0-1,0){$(\frac01,  \frac10, \frac{-1}{1})$};
\node[] at (2-1,0){$(\frac01,  \frac10, \frac{1}{1})$};
\node[] at (4-1,0){$(\frac01,  \frac11, \frac{1}{2})$};
\node[] at (5.5,1){$(\frac01, \frac12, \frac13)$};
\node[] at (5.5,-1){$(\frac11, \frac12, \frac23)$};
\node[] at (7.5,1.5){$(\frac01,\frac13,\frac14)$};
\node[] at (7.5,0.5){$(\frac12, \frac13,\frac25)$};
\node[] at (7.5,-0.5){$(\frac11, \frac23, \frac34)$};
\node[] at (7.5,-1.5){$(\frac12, \frac23, \frac35)$};
\draw (0.9-1,0) -- (1.3-1,0);
\draw(2.8-1,0) -- (3.2-1,0);
\draw (4.2-.5,0.2) -- (4.65,1);
\draw(4.2-.5,-0.2) -- (4.65,-1);
\draw(6.25,1.1) -- (6.75,1.5);
\draw(6.25,0.9) -- (6.75,0.5);
\draw(6.25,-0.9) -- (6.75,-0.5);
\draw(6.25,-1.1) -- (6.75, -1.5);
\draw(8.3,1.7) -- (9,1.9);
\draw(8.3,1.3) -- (9,1.1);
\draw (8.3,0.7) -- (9,0.9);
\draw(8.3,0.3) -- (9,0.1);
\draw(8.3,-0.3) -- (9,-0.1);
\draw(8.3,-0.7) -- (9,-0.9);
\draw(8.3,-1.3) -- (9,-1.1);
\draw(8.3,-1.7) -- (9,-1.9);
\end{tikzpicture}
\caption{The initial portions of the exchange trees for Markov Numbers and $\Q \cap (0,1)$}
\label{fig:MarkovAndQTrees}
\end{figure}

Since all generalized Markov numbers are also reachable by a sequence of Vieta jumping, we can also index generalized Markov numbers  larger than 1 with rational numbers in the interval $(0,1]$.  By comparing Figures \ref{fig:MarkovAndQTrees} and \ref{fig:GenMarkovTree}, we can for instance see that the generalized Markov number associated to $\frac23$, $m_{\frac23}$, is 217. In Section \ref{sec:algorithm}, we will give a direct way to compute $m_{\frac{p}{q}}$. 

\begin{figure}
\centering
\begin{tikzpicture}
\node[] at (0,0){$(1,1,1)$};
\node[] at (2,0){$(1,1,3)$};
\node[] at (4.2,0){$(1,3,13)$};
\node[] at (6.6,1){$(1,13,61)$};
\node[] at (6.6,-1){$(3,13,217)$};
\node[] at (9.5,1.5){$(1,61,291)$};
\node[] at (9.5,0.5){$(13,61,4683)$};
\node[] at (9.5,-0.5){$(3,217,3673)$};
\node[] at (9.5,-1.5){$(13,217,16693)$};
\draw (0.7,0) -- (1.3,0);
\draw(2.7,0) -- (3.4,0);
\draw (5,0.2) -- (5.6,0.9);
\draw(5,-0.2) -- (5.6,-0.9);
\draw(7.5,1.2) -- (8.4,1.5);
\draw(7.5,0.8) -- (8.3,0.5);
\draw(7.6,-0.8) -- (8.3,-0.5);
\draw(7.6,-1.2) -- (8, -1.5);
\draw(10.5,1.7) -- (10.8,1.9);
\draw(10.5,1.4) -- (10.8,1.2);
\draw (10.7,0.7) -- (11,0.9);
\draw(10.7,0.4) -- (11,0.2);
\draw(10.7,-0.4) -- (11,-0.2);
\draw(10.7,-0.7) -- (11,-0.9);
\draw(10.9,-1.4) -- (11.2,-1.2);
\draw(10.9,-1.7) -- (11.2,-1.9);
\end{tikzpicture}
\caption{The initial portions of the exchange tree for generalized Markov tuples}
\label{fig:GenMarkovTree}
\end{figure}
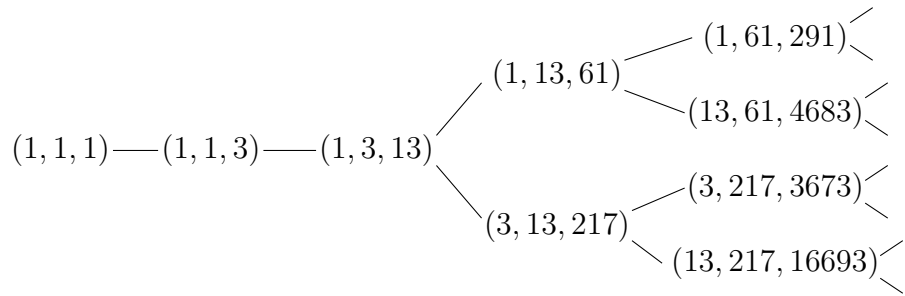

\subsection{Snake Graphs on Surfaces and Orbifolds}

Consider a connected,  oriented 2-dimensional Riemann surface  $S$ with a finite subset $M$ of points called \emph{marked points}, and pick a triangulation $T$ of the surface. As is thoroughly studied in \cite{FST} and \cite{FT}, we can associate a cluster algebra to the tuple $(S,M)$, such that the arcs are in bijection with cluster variables; in particular, arcs in $T$ are associated to initial cluster variables. In \cite{MSW}, Musiker, Schiffler, and Williams gave a direct way to compute the cluster variable $x_\gamma$ associated to the arc $\gamma$ via an edge-labeled graph called a \emph{snake graph}. If $\gamma$ has crosses arcs $\tau_{i_1},\ldots, \tau_{i_d}$ in the triangulation $T$, then the snake graph $G_{\gamma,T}$ consists of $d$ square tiles, $G_1,\ldots,G_d$,  glued along edges. The  tile $G_j$ represents the quadrilateral around the arc $\tau_{i_j}$ in $T$. Then, we can calculate $x_\gamma$ with respect to the initial cluster associated to $T$ by looking at all perfect matchings of $G_{\gamma,T}$. In the following, $\text{cross}(\gamma,T)$, $x(P)$ is the product of the weights of the edges in $P$, and $y(P)$ is another statistic associated to a perfect matching. We do not dwell on these details as they will not be necessary for our main results. 

\begin{theorem}[\cite{MSW}, Theorem 4.9]\label{thm:SnakeGraphSurface}
Given a triangulation $T$ on a surface with marked points $(S,M)$, let $\gamma$ be an arc on $(S,M)$. Then, the expansion of the cluster variable $x_\gamma$ in the cluster algebra arising from $(S,M)$ with initial cluster from $T$ is given by \[
x_\gamma^T = \frac{1}{\text{cross}(\gamma,T)} \sum_{P} x(P)y(P)
\]
where we sum over perfect matchings $P$ of $G_{\gamma,T}$. 
\end{theorem}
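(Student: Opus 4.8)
The plan is to prove the expansion formula by induction on $d = |\text{cross}(\gamma,T)|$, the number of arcs of $T$ crossed by $\gamma$. In the base case $d = 0$, the arc $\gamma$ is isotopic to some arc $\tau_i \in T$, the snake graph $G_{\gamma,T}$ is a single edge with a unique perfect matching $P$ satisfying $x(P) = x_i$ and $y(P) = 1$, and $\text{cross}(\gamma,T)$ is the empty product $1$; hence the right-hand side equals $x_i = x_\gamma^T$. It is convenient to treat $d = 1$ as a second base case: then $G_{\gamma,T}$ is a single square tile, its two perfect matchings select the two pairs of opposite sides of the quadrilateral of $T$ surrounding $\tau_{i_1}$, and summing $x(P)y(P)$ and dividing by $x_{i_1}$ reproduces exactly the Ptolemy (exchange) relation describing the flip at $\tau_{i_1}$, which is the relation defining $x_\gamma^T$.

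For the inductive step, suppose $d \ge 2$ and that $\gamma$ crosses $\tau_{i_1}, \dots, \tau_{i_d}$ in this order. The idea is to cut at the last crossing: applying a skein (smoothing) relation at the intersection of $\gamma$ with $\tau_{i_d}$ writes the product $x_\gamma \cdot x_{\tau_{i_d}}$ as a sum of two terms, each a product of cluster variables of arcs (and possibly boundary segments) that cross strictly fewer than $d$ arcs of $T$, so the inductive hypothesis applies to each factor. On the combinatorial side one verifies the matching analogue of this identity: the perfect matchings of $G_{\gamma,T}$ partition according to how they meet the last tile $G_d$, and the two resulting families are in weight-preserving bijection with (pairs of) perfect matchings of the strictly smaller snake graphs obtained by removing $G_d$ together with an adjacent tile. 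This is the \emph{grafting/cutting} behavior of snake graphs --- organized nowadays as snake graph calculus --- and it matches the algebraic smoothing term by term, including the crossing monomials appearing in the denominators.

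The step I expect to be the main obstacle is tracking the two decorations $x(P)$ and $y(P)$ together with the normalization $\tfrac{1}{\text{cross}(\gamma,T)}$ through this recursion: one must check that the change in $\text{cross}(\gamma,T)$ under removing a tile is absorbed correctly, that the minimal matching of $G_{\gamma,T}$ (which anchors the definition of the $y$-statistic) restricts to the minimal matchings of the pieces, and that the height monomials recording $y(P)$ add up under the cut. A further source of difficulty is the degenerate local configurations permitted on a general surface --- self-folded triangles, arcs enclosing a once-punctured monogon, and crossings adjacent to a puncture --- which need either a direct check or a preliminary reduction (for instance passing to a branched cover, or working with tagged triangulations) before the generic argument applies. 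Alternatively, one can route the proof through the $T$-path expansion of $x_\gamma^T$: first establish or cite the $T$-path formula, then construct an explicit weight-preserving bijection between $T$-paths joining the two endpoints of $\gamma$ and perfect matchings of $G_{\gamma,T}$, under which the weight of a $T$-path corresponds to $x(P)y(P)/\text{cross}(\gamma,T)$; this replaces the skein-relation bookkeeping with the (still delicate) verification of the bijection.
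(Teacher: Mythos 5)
You should first be aware that the paper does not prove this statement at all: Theorem~\ref{thm:SnakeGraphSurface} is quoted as background from Musiker--Schiffler--Williams \cite{MSW} (their Theorem~4.9) and used as a black box, so there is no internal proof to compare yours against. The argument in \cite{MSW} is also not the one you sketch: for unpunctured surfaces it rests on the earlier $T$-path/perfect-matching expansions and an induction that verifies compatibility of the proposed Laurent polynomial with flips of the triangulation, and the punctured case is handled through tagged arcs and a careful local analysis of self-folded triangles; your second, alternative route (via the $T$-path formula and a weight-preserving bijection) is the closest to the historical development.

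Judged on its own terms, your primary plan has a genuine gap that is more than bookkeeping: the inductive step leans on skein (smoothing) relations and on snake graph calculus, but in the literature both of these are \emph{derived from} the expansion formula you are trying to prove (the skein relations with principal coefficients are established by Musiker--Williams using snake graph expansions, and {\c{C}}anak{\c{c}}{\i}--Schiffler's calculus, e.g.\ \cite{CS2,CS4}, takes the expansion formula as input). Invoking them here is circular unless you give an independent derivation, say via lambda lengths and Ptolemy trace identities for the coefficient-free part together with a separate argument for the $y$-monomials, which you do not supply. Moreover, smoothing the crossing of $\gamma$ with $\tau_{i_d}$ need not produce only plain arcs crossing fewer arcs of $T$: if $\gamma$ meets $\tau_{i_d}$ more than once, the resolutions can involve generalized arcs or closed loops, whose expansions (band graphs) are again downstream of the theorem; and the claim that the minimal matching and the height monomials behave additively under cutting off the last tile is exactly the delicate point, not a routine check. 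Finally, dismissing punctures, self-folded triangles, and tagging with ``a direct check or a branched cover'' leaves out the part of \cite{MSW} that required the most work. As written, the proposal is a plausible plan rather than a proof, and its central step is at risk of circularity.
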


For an example of a snake graph,  see Figure \ref{fig:G1q}. 

\subsection{Orbifolds}

An orbifold is a generalization of a manifold where the local structure is given by quotients of open subsets of $\mathbb{R}^n$ under finite group actions. For our considerations, an orbifold $(S,M,Q)$ is a marked surface with an additional set of special points called \emph{orbifold points} $Q$. Each orbifold point comes with an \emph{order} $p \in \mathbb{Z}_{\geq 2}$. Arcs in an orbifold have endpoints in $M$ and cannot pass through orbifold points. An arc which cuts out an unpunctured monogon with exactly one point in $Q$ is called a \emph{pending arc}.

Part of the story about cluster algebras from surfaces was extended to orbifolds by Chekhov and Shapiro in \cite{Chekhov}.  The relevant algebra is a \emph{generalized cluster algebra}.  These cluster algebras are generalized in the sense that the exchange polynomials can have more than 2 terms. A generalized cluster algebra arising from an orbifold will have exchange polynomials with 2 or 3 terms; the number of terms depends on whether the variable corresponds to a standard arc or a pending arc. In particular, in the generalized cluster algebra arising from $\mathcal{O}_3$, the exchange polynomials are all of the form $u+v$ or $u^2 + uv + v^2$. The snake graph expansion formula was extended to generalized cluster variables in a generalized cluster algebra from an orbifold by the first author and Elizabeth Kelley \cite{BK}. 

\begin{theorem}[\cite{BK}, Theorem 1.1]\label{thm:BK}
Given a triangulation $T$ on an orbifold $\mathcal{O}$, let $\gamma$ be an arc on $\mathcal{O}$. Then, the expansion of the generalized cluster variable $x_\gamma$ in the cluster algebra arising from $\mathcal{O}$ with initial cluster from $T$ is given by \[
x_\gamma^T = \frac{1}{\text{cross}(\gamma,T)} \sum_{P} x(P)y(P)
\]
where we sum over perfect matchings $P$ of $G_{\gamma,T}$. 
\end{theorem}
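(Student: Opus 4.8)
\emph{Proof idea.} The plan is to follow the inductive strategy that Musiker, Schiffler, and Williams use to prove Theorem~\ref{thm:SnakeGraphSurface} in \cite{MSW}, modifying it to accommodate pending arcs and the three-term exchange polynomials $u^{2}+uv+v^{2}$ that they produce. I would induct on $|\text{cross}(\gamma,T)|$, the number of arcs of $T$ crossed by $\gamma$. In the base case $\gamma\in T$ the snake graph $G_{\gamma,T}$ is a single edge with one perfect matching, and both sides of the claimed identity reduce to the initial variable $x_\gamma$. Next I would check the one-crossing cases by hand: if $\gamma$ crosses a single standard arc $\tau$, then $G_{\gamma,T}$ is one square tile, whose two perfect matchings produce the two monomials of the ordinary exchange relation at $\tau$; if instead $\gamma$ crosses a single pending arc at an orbifold point, then $G_{\gamma,T}$ is the distinguished tile prescribed in \cite{BK}, and one verifies that its perfect matchings are in bijection with the three monomials of $u^{2}+uv+v^{2}$, the middle monomial $uv$ recording the ``winding'' matching and the $y$-statistic supplying its coefficient.

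For the inductive step, assume $\gamma$ crosses at least two arcs of $T$, pick an arc $\tau\in T$ that $\gamma$ crosses and that lies at an end of the crossing sequence, and let $T'$ be the triangulation obtained by flipping $\tau$, chosen so that $|\text{cross}(\gamma,T')|<|\text{cross}(\gamma,T)|$; the formula then holds for $(\gamma,T')$ by induction. It remains to establish two matching statements. On the algebraic side, flipping $\tau$ yields an exchange relation expressing $x_\gamma^{T}$ in terms of $x_\gamma^{T'}$ and the cluster variables of shorter sub-arcs --- an ordinary two-term relation when $\tau$ is standard, and a generalized three-term relation $u^{2}+uv+v^{2}$ when $\tau$ is pending. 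On the combinatorial side, one shows that $G_{\gamma,T}$ is obtained from $G_{\gamma,T'}$ by a local grafting move at the tile(s) for $\tau$, and that, according to whether a distinguished edge lies in the matching, the perfect matchings of $G_{\gamma,T}$ split into two families (standard $\tau$) or three families (pending $\tau$), each contributing one monomial of the exchange polynomial times the matching generating function of a smaller snake graph. Comparing $x$-weights, $y$-statistics, and the $\tfrac{1}{\text{cross}}$ normalizations on the two sides then closes the induction.

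The main obstacle is the pending-arc instance of the grafting step: one must prove a generalized skein (or resolution) identity for snake graphs at a pending tile, asserting that cutting $G_{\gamma,T}$ there produces exactly three sub-snake-graphs whose matching generating functions reproduce, with the correct $x$- and $y$-bookkeeping, the three terms of $u^{2}+uv+v^{2}$. I would attack this through a careful local analysis of how $\gamma$ winds past the orbifold point, and, where available, by passing to the double cover of $\Orb$ branched at the orbifold points: on the cover a pending arc unfolds into an honest arc crossing a pair of ordinary arcs, so the identity can be pulled back from the surface skein relations of \cite{MSW} via a symmetry argument. The remaining bookkeeping --- checking that the $y(P)$ weights and the normalization transform correctly, with the special tile contributing to $\text{cross}(\gamma,T)$ differently from a square tile --- is routine but needs care precisely in the pending case.
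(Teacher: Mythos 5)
The paper does not prove this statement at all: Theorem~\ref{thm:BK} is quoted as background from the reference \cite{BK} (Banaian--Kelley), so there is no in-paper argument to compare your sketch against. Judged on its own terms, your outline is a reasonable description of the general MSW-style strategy, but two steps as written have genuine gaps.

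First, the inductive step is not well-founded as stated. You propose to flip an arc $\tau$ crossed by $\gamma$ at an end of its crossing sequence ``chosen so that $|\text{cross}(\gamma,T')|<|\text{cross}(\gamma,T)|$,'' but such a choice need not exist: flipping a single arc of $T$ that $\gamma$ crosses can leave the crossing number unchanged or increase it, and on $\mathcal{O}_3$ in particular every flip of a pending arc produces another pending arc, so there is no obvious monotone quantity. The standard ways to close such an induction are different: either induct on crossings by cutting $\gamma$ itself into shorter segments (skein/grafting identities relating $G_{\gamma,T}$ to snake graphs of subarcs), or prove that the matching sum on the right-hand side transforms correctly under an arbitrary flip of the triangulation and then use connectivity of the flip graph together with the zero-crossing base case $\gamma\in T$. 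Your sketch conflates these two schemes, and the step ``the formula holds for $(\gamma,T')$ by induction'' has no valid induction parameter behind it.

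Second, the proposed rescue for the pending-arc case via a branched double cover cannot work here. A degree-two cover branched at an orbifold point only trivializes $\mathbb{Z}/2$ isotropy; the orbifold points relevant to this paper (and to the generalized exchange polynomial $u^{2}+uv+v^{2}$) have order three, and no double cover unfolds them into ordinary marked points, so the three-term identity cannot be pulled back from the surface skein relations of \cite{MSW} by that symmetry argument. This is precisely why \cite{BK} must treat the pending crossings directly: in their construction a crossing with a pending arc contributes a pair of tiles (not a single distinguished tile), the crossing monomial picks up the square of the pending variable, and the three perfect matchings of that local piece account for the three monomials $u^{2}$, $uv$, $v^{2}$. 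Any complete proof along your lines would need to establish that local three-way resolution combinatorially, with the $y$-weights and the $\frac{1}{\text{cross}(\gamma,T)}$ normalization tracked through it, rather than deferring it to a covering-space argument that is unavailable at order three.
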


In this paper we specifically consider the orbifold $\mathcal{O}_3$, a sphere with one puncture and three orbifold points of order three.  Every triangulation of $\mathcal{O}_3$ consists of three pending arcs which are all based at the unique marked point. Arcs on an orbifold can be flipped just as arcs on a surface; moreover, a flip of a pending arc will always be another pending arc. Thus we see that the only effect that a flip has on the initial triangulation on $\mathcal{O}_3$ is flipping the relative orientation of the arcs in the triangulation, just as in the case of a once-punctured torus. 

\begin{center}
    \begin{tikzpicture}
    \node[circle, fill = black, scale = 0.7] at (0,0){};
    \node[] at (1,0){$\times$};
    \node[] at (-0.6,0.8){$\times$};
    \node[] at (-0.6, -0.8){$\times$};
    \draw[red, thick] (0,0) to [out = -30, in = 270] (1.3,0);
    \draw[red, thick] (0,0) to [out = 30, in = 90] (1.3,0);
    \node[red] at (0.6,0.5){$\tau_1$};
    \draw[green,thick] (0,0) to [out = 105, in = 45] (-0.8,1);
    \draw[green,thick] (0,0) to [out = 155, in = 215]  (-0.8,1);
    \node[green] at (-0.8,0.3){$\tau_2$};
    \draw[blue,thick] (0,0) to [out = 255, in = 305] (-0.8,-1);
    \draw[blue,thick] (0,0) to [out = 205, in = 135] (-0.8,-1);
    \node[blue] at (0,-0.7){$\tau_3$};
    \end{tikzpicture}
\end{center}

The combinatorics of the orbifold $\mathcal{O}_3$, Theorem \ref{thm:Gyoda} and Theorem \ref{thm:BK} allow us to interpret the generalized Markov numbers and tuples in terms of a generalized cluster algebra.  Given a graph $G$,  let $m(G)$ be the number of perfect matchings of $G$. 

\begin{cor}[\cite{Gyoda}, Corollary 3.9]\label{cor:Gyoda}
Let $\gamma_1,\gamma_2,\gamma_3$ be a triangulation of $\mathcal{O}_3$.
\begin{itemize}
    \item For any $i$, the number $m(G_{\gamma_i,T})$ is a generalized Markov number. All generalized Markov numbers appear in this way.
    \item  The triple $(m(G_{\gamma_1,T}),m(G_{\gamma_2,T}),m(G_{\gamma_3,T}))$ is a generalized Markov triple. All generalized Markov tuples appear in this way.
\end{itemize}
\end{cor}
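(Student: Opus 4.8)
The plan is to deduce this from Theorem~\ref{thm:BK} together with Theorem~\ref{thm:Gyoda}, by passing through the generalized cluster algebra $\mathcal{A}_3$ attached to $\mathcal{O}_3$. Fix the initial triangulation $T=\{\tau_1,\tau_2,\tau_3\}$, work with trivial coefficients (equivalently, specialize every coefficient $y$-variable to $1$), and consider the evaluation that sends each initial cluster variable $x_{\tau_i}$ to $1$. The first step is to show that under this evaluation the cluster variable of any arc $\gamma$ of $\mathcal{O}_3$ becomes $m(G_{\gamma,T})$: by Theorem~\ref{thm:BK} we have $x_\gamma^T=\frac{1}{\text{cross}(\gamma,T)}\sum_P x(P)y(P)$ with $P$ ranging over perfect matchings of $G_{\gamma,T}$, and since $\gamma$ crosses only arcs of $T$, both $\text{cross}(\gamma,T)$ and each $x(P)$ are monomials in $x_{\tau_1},x_{\tau_2},x_{\tau_3}$ while each $y(P)$ is a monomial in the $y$-variables; hence the evaluation collapses the formula to $\sum_P 1=m(G_{\gamma,T})$. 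Applied to the arcs of $T$ itself, which cross nothing, this gives the triple $(1,1,1)$, which will serve as the root of the comparison.

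Next I would identify the mutations. Every triangulation of $\mathcal{O}_3$ is a triple of pending arcs at the order-$3$ orbifold points, its adjacency quiver is the Markov quiver $Q_T$, and flipping any one of the three arcs again produces such a triangulation; as recorded in the paragraph before Theorem~\ref{thm:BK}, the corresponding exchange polynomial in $\mathcal{A}_3$ is $u^2+uv+v^2$ in the two variables of the unflipped arcs. Consequently, after the evaluation above, if a cluster (hence a triangulation) carries the triple $(a,b,c)$, then flipping its first arc replaces the triple by $\bigl(\tfrac{b^2+bc+c^2}{a},\,b,\,c\bigr)$, and likewise for the other two arcs --- exactly the exchange appearing in Theorem~\ref{thm:Gyoda}. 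Thus the exchange graph of $\mathcal{A}_3$ rooted at $T$ and the exchange tree of generalized Markov tuples rooted at $(1,1,1)$ (Figure~\ref{fig:GenMarkovTree}) have the same shape: at every node the three outgoing edges respectively flip or Vieta-jump one of the three coordinates. Since the roots agree by the previous step and corresponding edges perform the same operation, an induction on the distance from the root identifies, node by node, the triple of matching numbers of a triangulation with the corresponding generalized Markov tuple. This proves that every $m(G_{\gamma_i,T})$ is a generalized Markov number and every such triple is a generalized Markov tuple; conversely, Theorem~\ref{thm:Gyoda} guarantees that every generalized Markov tuple is obtained from $(1,1,1)$ by these Vieta jumps, each of which is realized by a flip, so all generalized Markov tuples --- and hence all generalized Markov numbers --- occur in this way.

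The main obstacle is the justification used in the second step: one must confirm that the generalized-cluster-algebra exchange relation of a pending arc at an order-$3$ orbifold point specializes \emph{exactly} to the integer recurrence $c\mapsto\frac{a^2+ab+b^2}{c}$, i.e.\ that the exchange polynomial is precisely $u^2+uv+v^2$ rather than merely some three-term polynomial, and that no surviving coefficient or $y$-variable factor disturbs the recurrence after the specialization. A secondary point to check is that the flip graph of $\mathcal{O}_3$ really is a tree, isomorphic as a rooted, $3$-edge-colored graph to the Vieta-jump tree, so that the inductive comparison in the last step is well posed; this is the orbifold analogue of the familiar fact that the flip graph of the once-punctured torus is the trivalent tree.
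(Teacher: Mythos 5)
Your derivation is essentially the intended one: the paper does not prove this corollary itself but imports it from \cite{Gyoda}, and the route the paper's framing suggests is exactly the combination you give --- Theorem~\ref{thm:BK} specialized at $x_{\tau_1}=x_{\tau_2}=x_{\tau_3}=1$ (so each generalized cluster variable counts perfect matchings), the pending-arc exchange polynomial $u^2+uv+v^2$ turning each flip into the Vieta jump $(a,b,c)\mapsto(a,b,\tfrac{a^2+ab+b^2}{c})$, and Theorem~\ref{thm:Gyoda} for surjectivity. The two caveats you flag are already covered by the paper's setup: the exchange polynomials of $\mathcal{A}_3$ are stated in the background to be exactly $u+v$ or $u^2+uv+v^2$ (with the tagging subtlety handled in the remark following the corollary), and you do not actually need the flip graph to be a tree --- flip-connectedness of the triangulations of $\mathcal{O}_3$ together with the flip/Vieta-jump correspondence gives one inclusion, and Theorem~\ref{thm:Gyoda} gives the other.
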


\begin{remark}
The description of generalized snake graphs in \cite{BK} was only concerned with orbifolds without punctures. When punctures are present in a surface, in order to consider the corresponding cluster algebra one must use \emph{tagged arcs} and \emph{tagged triangulations}. However, any triangulation of $\Orb_3$ consists of three pending arcs, which must all have the same tagging at the unique puncture.  Thus, if we consider an initial triangulation with all arcs tagged plain, we can use generalized snake graphs to compute the generalized cluster variables resulting from any finite sequence of mutations. 
\end{remark}

\subsection{Continued Fractions}

{\c{C}}anak{\c{c}}{\i} and Schiffler give a method to compute the number of perfect matchings of a snake graph via continued fractions in \cite{CS}. This method first requires knowledge of a \emph{sign function} on a snake graph. 

\begin{definition}
Given a snake graph $G$,  a \emph{sign function} on $G$ labels all edges of the snake graph with $+$ or $-$ such that \begin{enumerate}
    \item the signs on the South and East edges of a tile are the same,
    \item the signs on the North and West edges of a tile are the same, 
    \item the signs on the North and South edges of a tile are the opposite, and
    \item the signs on the East and West edges of a tile are the opposite. 
\end{enumerate}
In order to have a unique sign function on each snake graph, we instill the convention that the South edge of the first tile is assigned the sign $-$. 

The \emph{sign sequence} of a snake graph $G$ with $m-1$ tiles is $f_1,\ldots,f_m$ where $f_1$ is the sign on the south edge of the first tile ($-$ by convention), for $1 < i < m$, $f_i$ is the sign of the edge shared by tiles $G_i$ and $G_{i+1}$, and $f_m = f_{m-1}$. 
\end{definition}

We call the edge shared by tiles $G_i$ and $G_{i+1}$ an \emph{internal edge}.

A consequence of the definition of a sign function is that the sign is constant on a diagonal line through the edges of the snake graph traveling North-East. Moreover, we can see that a snake graph is determined by its sign sequence.  

From the sign sequence, we can compute the number of perfect matchings of a snake graph by looking at the numerator of an associated \emph{continued fraction}.

\begin{definition}
Let $a_1,\ldots,a_n$ be a sequence of integers. Then, the \emph{continued fraction}, $[a_1,\ldots,a_n]$ is given by \[
[a_1,\ldots,a_n] = a_1 + \cfrac{1}{a_2 + \cfrac{1}{a_3 + \cfrac{1}{\ddots + \cfrac{1}{a_n}}}}
\]
\end{definition}

Every rational number can be expressed as a continued fraction. This expression is unique if we require $a_n > 1$; it is straightforward to see that if $a_n > 1$, $[a_1,\ldots,a_n] = [a_1,\ldots,a_n - 1,1]$. 

Given a snake graph with sign sequence $f_1,\ldots,f_m$,  we form a continued fraction $[a_1,\ldots,a_n]$ by counting the lengths of subsequences of the sign sequence which have the same sign.  For example, if the sign sequence is $--+++--$,  then the continued fraction will be $[2,3,2]$. Since  $[a_1,\ldots,a_n] = [a_1,\ldots,a_n - 1,1]$,  we can choose $f_{m-1} = f_m$ in the sign sequence from a snake graph $G$ on $m$ tiles. Pictorially, we can choose the sign of either the North or East edge of the last tile, so we always choose the one which has the same sign as the internal edge of the last tile. 

Let $\mathcal{N}([a_1,\ldots,a_n])$ be the numerator of the continued fraction $[a_1,\ldots,a_n]$. Let $\mathcal{G}[a_1,\ldots,a_n]$ be the snake graph whose sign function gives the continued fraction $[a_1,\ldots,a_n]$; after fixing our conventions, this snake graph is unique.

\begin{theorem}[\cite{CS}, Theorem A]\label{thm:ContFracSnakeGraph}
Let $a_1,\ldots,a_n$ be a sequence of positive integers such that $a_n > 1$. Then, \[
[a_1,\ldots,a_n] = \frac{m(\mathcal{G}[a_1,\ldots,a_n])}{m(\mathcal{G}[a_2,\ldots,a_n])},
\]
where the right-hand side is reduced.
In particular, $m(\mathcal{G}[a_1,\ldots,a_n]) = \mathcal{N}[a_1,\ldots,a_n]$. 
\end{theorem}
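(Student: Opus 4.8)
I would prove the first displayed identity $[a_1,\ldots,a_n] = m(\mathcal{G}[a_1,\ldots,a_n])/m(\mathcal{G}[a_2,\ldots,a_n])$ by induction on $n$; the claim that the fraction is reduced, and hence that $m(\mathcal{G}[a_1,\ldots,a_n]) = \mathcal{N}[a_1,\ldots,a_n]$, will then follow with little extra effort. Abbreviate $m_i := m(\mathcal{G}[a_i,\ldots,a_n])$, with the convention that $\mathcal{G}[\,]$ is a single edge, so $m_{n+1} = 1$. For the base case $n = 1$, the graph $\mathcal{G}[a_1]$ is built from a constant sign sequence of length $a_1$ (hence has $a_1 - 1$ tiles, forming a zigzag strip), and one checks directly that such a strip has exactly $a_1$ perfect matchings; thus $m_1/m_2 = a_1/1 = [a_1]$.

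For the inductive step I would use the basic identity $[a_1,\ldots,a_n] = a_1 + 1/[a_2,\ldots,a_n]$ together with the inductive hypothesis $[a_2,\ldots,a_n] = m_2/m_3$. Substituting these in shows that the assertion for $(a_1,\ldots,a_n)$ is equivalent to the purely combinatorial recurrence
\[
m_1 \;=\; a_1\, m_2 \;+\; m_3 ,
\]
since dividing this relation through by $m_2$ gives exactly $a_1 + m_3/m_2$. So the whole matter reduces to proving this recurrence for perfect-matching counts of the graphs $\mathcal{G}[\,\cdot\,]$.

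To establish the recurrence I would exploit the explicit shape of $G := \mathcal{G}[a_1,\ldots,a_n]$. Its sign sequence begins with a maximal run of $a_1$ equal signs, which corresponds to a zigzag ``head'' of $a_1 - 1$ tiles joined to the remainder of $G$ at a turning tile; removing the head exhibits the rest of $G$ as a copy of $\mathcal{G}[a_2,\ldots,a_n]$, and removing one further block exhibits a copy of $\mathcal{G}[a_3,\ldots,a_n]$. I would then sort the perfect matchings of $G$ by how they restrict to the head and the turning tile: the matchings whose restriction to the tail is itself a perfect matching of $\mathcal{G}[a_2,\ldots,a_n]$ are in bijection with (perfect matchings of $\mathcal{G}[a_2,\ldots,a_n]$) $\times$ (perfect matchings of the head), and since the head has $a_1$ perfect matchings by the base case this class contributes $a_1 m_2$; the remaining matchings are forced across the head and biject with perfect matchings of $\mathcal{G}[a_3,\ldots,a_n]$, contributing $m_3$. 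Together these give $m_1 = a_1 m_2 + m_3$. (Equivalently, one can argue by transfer matrices: tracking the pair consisting of the number of perfect matchings of a partial snake graph and the number of matchings covering every vertex except the two endpoints of a distinguished boundary edge of the last tile, one finds that appending each block of the sign sequence acts on this pair by the fixed unimodular matrix $\left(\begin{smallmatrix} a_i & 1 \\ 1 & 0 \end{smallmatrix}\right)$, and the product of these matrices is precisely the continuant matrix, which yields not just the recurrence but directly the identity $m_1 = \mathcal{N}[a_1,\ldots,a_n]$.) Making the matching decomposition precise — keeping careful track of the sign conventions, the exact location and chirality of the turning tile, and the degenerate case $a_1 = 1$ in which the head is empty and one has only a single turn — is the step I expect to be the main obstacle.

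Finally, iterating the recurrence $m_i = a_i m_{i+1} + m_{i+2}$ (the same lemma applied to each suffix $(a_i,\ldots,a_n)$), any common divisor of $m_1$ and $m_2$ divides $m_3$, hence $m_4$, and so on down to $m_{n+1} = 1$; thus $\gcd(m_1,m_2) = 1$, so $m_1/m_2$ is already in lowest terms. Since $[a_1,\ldots,a_n]$ equals this reduced fraction, its numerator is $\mathcal{N}[a_1,\ldots,a_n] = m_1 = m(\mathcal{G}[a_1,\ldots,a_n])$, completing the argument.
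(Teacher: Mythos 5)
The paper does not prove this statement at all: Theorem \ref{thm:ContFracSnakeGraph} is imported from \cite{CS} (Theorem A there) and used as a black box, so there is no in-paper argument to compare against. Judged on its own terms, your overall route is the standard one and is sound in structure: induct on $n$, use $[a_1,\ldots,a_n]=a_1+1/[a_2,\ldots,a_n]$ to reduce everything to the matching recurrence $m_1=a_1m_2+m_3$ (the matching-count analogue of Lemma \ref{lem:ContFracRecurrence}), and then get reducedness, hence the ``in particular'' clause, by Euclidean descent down to $m_{n+1}=1$; the base case (a zigzag on $a_1-1$ tiles has $a_1$ matchings) and the gcd argument are both fine, and your transfer-matrix aside is a complete alternative once the block-appending action is verified, since products of $\left(\begin{smallmatrix} a_i & 1\\ 1 & 0\end{smallmatrix}\right)$ compute continuants.

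The one place your sketch is not yet correct as written is the decomposition itself. Counting tiles, $\mathcal{G}[a_1,\ldots,a_n]$ has $\sum_i a_i-1$ tiles while $\mathcal{G}[a_2,\ldots,a_n]$ has $\sum_{i\geq 2}a_i-1$, so the embedded copy of $\mathcal{G}[a_2,\ldots,a_n]$ is obtained by deleting the first $a_1$ tiles, i.e.\ the zigzag head of $a_1-1$ tiles \emph{and} the tile $G_{a_1}$, through which the snake runs straight because the sign changes there; removing only the head, as you state, leaves one tile too many. With the corrected tail, the dichotomy is: either the tail is matched internally (a parity count shows the two ``side'' edges of $G_{a_1}$ are used either both or not at all), giving $m_2\cdot a_1$ matchings, or both side edges of $G_{a_1}$ are used, forcing a unique matching on the head minus the two shared vertices and leaving a perfect matching of the tail minus its two attaching vertices. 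That this last count equals $m_3$ is a genuine lemma, not a formality: the tail attaches to $G_{a_1}$ along the non-dominant boundary edge of its first tile (in the terminology the paper introduces in Section \ref{sec:extendalgorithm}), and deleting the endpoints of the dominant edge instead would give $m(\mathcal{G}[a_2-1,a_3,\ldots,a_n])$, which is generally different — e.g.\ for $[2,2,2]$ the two choices give $12$ versus $13$. So you must either peel along the first run of the tail or strengthen the induction hypothesis to cover vertex-deleted snake graphs; you flagged exactly this bookkeeping as the main obstacle, and with that lemma supplied (or by falling back on the transfer-matrix computation) the proof goes through.
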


\begin{example}

We display the entire sign function on the following 5-tile snake graph. The sign sequence from this snake graph is $-,-,-,+,+,+$. 

\begin{center}
    \begin{tikzpicture}
    \draw(0,0) to node[below, red]{$-$} (1,0) to node[below, red]{$+$} (2,0) to node[right, red]{$+$} (2,1) to node[right, red]{$-$} (2,2) to  node[below, red]{$-$} (3,2) to  node[right, red]{$-$} (3,3) to node[above, red]{$+$} (2,3) to  node[above, red]{$-$}(1,3) to  node[left, red]{$-$}(1,2) to  node[left, red]{$+$} (1,1) to node[above, red]{$+$} (0,1) to node[left, red]{$+$} (0,0);
    \draw (1,0) to node[right, red]{$-$} (1,1);
    \draw (2,1) to node[above, red]{$-$} (1,1);
    \draw (2,2) to node[above, red]{$+$} (1,2);
    \draw (2,3) to node[right, red]{$+$} (2,2);
    \end{tikzpicture}
\end{center}

The continued fraction to compute in this case is $[3,3]$. We have that $[3,3] = 3 + \frac13 = \frac{10}{3}$, and indeed this snake graph has 10 perfect matchings. 
\end{example}

We record a few results concerning continued fractions which will be useful for our later calculations. The first two are well-known.

\begin{lemma}\label{lem:FlipContFraction}
Let $a_1,\ldots,a_n$ be positive integers. Then,
\[
\mathcal{N}[a_1,\ldots,a_n] = \mathcal{N}[a_n,\ldots,a_1].
\]
\end{lemma}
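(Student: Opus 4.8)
The plan is to prove that the numerator of a continued fraction is unchanged under reversal of the sequence, i.e. $\mathcal{N}[a_1,\ldots,a_n] = \mathcal{N}[a_n,\ldots,a_1]$. The cleanest route is via the matrix product formula for continuants. Recall that if we set
\[
M(a) = \begin{pmatrix} a & 1 \\ 1 & 0 \end{pmatrix},
\]
then the product $M(a_1)M(a_2)\cdots M(a_n)$ has the form $\begin{pmatrix} \mathcal{N}[a_1,\ldots,a_n] & \mathcal{N}[a_1,\ldots,a_{n-1}] \\ \mathcal{N}[a_2,\ldots,a_n] & \mathcal{N}[a_2,\ldots,a_{n-1}] \end{pmatrix}$, where the $(1,1)$ entry is exactly the numerator $\mathcal{N}[a_1,\ldots,a_n]$ (the \emph{continuant} $K(a_1,\ldots,a_n)$). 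I would first establish this matrix identity by induction on $n$: the base case $n=1$ is immediate, and the inductive step multiplies on the right by $M(a_{n})$ and uses the standard three-term recurrence $\mathcal{N}[a_1,\ldots,a_n] = a_n \,\mathcal{N}[a_1,\ldots,a_{n-1}] + \mathcal{N}[a_1,\ldots,a_{n-2}]$ together with the analogous recurrence "from the left", $\mathcal{N}[a_1,\ldots,a_n] = a_1\,\mathcal{N}[a_2,\ldots,a_n] + \mathcal{N}[a_3,\ldots,a_n]$.

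Once the matrix formula is in hand, the reversal identity falls out by transposition. Each $M(a_i)$ is symmetric, so
\[
\bigl(M(a_1)M(a_2)\cdots M(a_n)\bigr)^{\!\top} = M(a_n)^{\top}\cdots M(a_1)^{\top} = M(a_n)\cdots M(a_1).
\]
The $(1,1)$ entry of a matrix equals the $(1,1)$ entry of its transpose, so the $(1,1)$ entry of $M(a_1)\cdots M(a_n)$ — which is $\mathcal{N}[a_1,\ldots,a_n]$ — equals the $(1,1)$ entry of $M(a_n)\cdots M(a_1)$, which is $\mathcal{N}[a_n,\ldots,a_1]$. This completes the proof.

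An alternative, more combinatorial, proof would use Theorem \ref{thm:ContFracSnakeGraph}: $m(\mathcal{G}[a_1,\ldots,a_n]) = \mathcal{N}[a_1,\ldots,a_n]$, so it suffices to observe that $\mathcal{G}[a_n,\ldots,a_1]$ is the snake graph obtained from $\mathcal{G}[a_1,\ldots,a_n]$ by reversing the order of the tiles (equivalently, rotating the snake graph by $180^\circ$), which is clearly a graph isomorphism and hence preserves the number of perfect matchings. One must be slightly careful that reversing the sign sequence yields the correct reversed snake graph under the stated conventions — a reversal may flip all signs simultaneously, but this corresponds to the symmetry of a snake graph under reflection, which again does not change $m$. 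I would likely present the matrix proof as the main argument since it is self-contained and short.

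The only real obstacle is bookkeeping: getting the precise statement of the continuant matrix identity right (which entry is which, and that the $(1,1)$ entry is genuinely the \emph{numerator} rather than the denominator) and making sure the two recurrences — expanding from the last term versus expanding from the first term — are both available. Both are standard facts about continuants, and the paper explicitly notes that this lemma "is well-known," so I would state the matrix lemma, give the one-line induction, and then deduce the result by transposition, keeping the whole proof to a few lines.
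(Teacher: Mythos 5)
Your proof is correct, but there is nothing in the paper to compare it against: the paper states this lemma (together with Lemma \ref{lem:ContFracRecurrence}) as ``well-known'' and gives no proof at all, so your matrix argument is supplying a proof the authors omitted rather than paralleling one. The continuant route you chose is the standard one and is sound: the identity
\[
M(a_1)\cdots M(a_n) \;=\; \begin{pmatrix} \mathcal{N}[a_1,\ldots,a_n] & \mathcal{N}[a_1,\ldots,a_{n-1}] \\ \mathcal{N}[a_2,\ldots,a_n] & \mathcal{N}[a_2,\ldots,a_{n-1}] \end{pmatrix}
\]
follows by induction using exactly the right-hand recurrence of Lemma \ref{lem:ContFracRecurrence} (which is independent of the present lemma, so there is no circularity), and symmetry of each $M(a_i)$ plus invariance of the $(1,1)$ entry under transposition gives the reversal identity. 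Two small bookkeeping points you should make explicit if you write this up: first, in the base case the $(2,2)$ entry is the continuant of a sequence of length $-1$, which must be declared to be $0$ (the paper only fixes $\mathcal{N}[\,]=1$), or else start the induction at $n=2$; second, the identification of the $(1,1)$ entry with the \emph{numerator} of $[a_1,\ldots,a_n]$ in lowest terms uses that consecutive continuants are coprime, which follows from $\det M(a_i)=-1$ and so is available from the same matrix identity. Your alternative snake-graph argument (reversal of the sign sequence corresponds to a $180^\circ$ rotation of $\mathcal{G}[a_1,\ldots,a_n]$, which preserves the number of perfect matchings) also works and is closer in spirit to how the paper uses the lemma, but the matrix proof is shorter and self-contained, so presenting it as the main argument is the right call.
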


\begin{lemma}\label{lem:ContFracRecurrence}
Let $n \geq 2$ and let $a_1,\ldots,a_n$ be positive integers. Then,
\[
\mathcal{N}[a_1,\ldots,a_n] = a_1 \mathcal{N}[a_2,\ldots,a_n] + \mathcal{N}[a_3,\ldots,a_n]
\]
and 
\[
\mathcal{N}[a_1,\ldots,a_n] = a_n \mathcal{N}[a_1,\ldots,a_{n-1}] + \mathcal{N}[a_1,\ldots,a_{n-2}].
\]
where we define $\mathcal{N}[] = 1$.  
\end{lemma}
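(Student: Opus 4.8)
The plan is to prove both identities by a straightforward induction on $n$, using only the recursive definition of the continued fraction together with a basic fact about numerators and denominators of continued fractions. Write $\mathcal{N}[a_1,\ldots,a_n]$ and $\mathcal{D}[a_1,\ldots,a_n]$ for the numerator and denominator of $[a_1,\ldots,a_n]$ (in the standard non-reduced convention generated by the recursion below), so that $[a_1,\ldots,a_n] = \mathcal{N}[a_1,\ldots,a_n]/\mathcal{D}[a_1,\ldots,a_n]$, with $\mathcal{N}[\,] = 1$ and $\mathcal{D}[\,] = 0$, and $\mathcal{N}[a_n] = a_n$, $\mathcal{D}[a_n]=1$. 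From $[a_1,\ldots,a_n] = a_1 + 1/[a_2,\ldots,a_n]$ one reads off directly the pair of ``head'' recursions
\[
\mathcal{N}[a_1,\ldots,a_n] = a_1\,\mathcal{N}[a_2,\ldots,a_n] + \mathcal{D}[a_2,\ldots,a_n], \qquad
\mathcal{D}[a_1,\ldots,a_n] = \mathcal{N}[a_2,\ldots,a_n].
\]
The second of these, applied one index deeper, gives $\mathcal{D}[a_2,\ldots,a_n] = \mathcal{N}[a_3,\ldots,a_n]$, and substituting into the first yields exactly the first claimed identity
\[
\mathcal{N}[a_1,\ldots,a_n] = a_1\,\mathcal{N}[a_2,\ldots,a_n] + \mathcal{N}[a_3,\ldots,a_n],
\]
valid for $n\geq 2$ (when $n=2$ the last term is $\mathcal{N}[\,]=1$, matching $\mathcal{D}[a_2]=1$).

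For the second (``tail'') identity I would argue by induction on $n$. The base case $n=2$ is the identity $[a_1,a_2] = a_1 + 1/a_2 = (a_1a_2+1)/a_2$, so $\mathcal{N}[a_1,a_2] = a_1 a_2 + 1 = a_2\,\mathcal{N}[a_1] + \mathcal{N}[\,]$. For the inductive step, apply the head recursion proved above to peel off $a_1$: $\mathcal{N}[a_1,\ldots,a_n] = a_1\mathcal{N}[a_2,\ldots,a_n] + \mathcal{N}[a_3,\ldots,a_n]$. By the inductive hypothesis applied to the shorter sequences $a_2,\ldots,a_n$ and $a_3,\ldots,a_n$ (both of which end in $a_n$), we have $\mathcal{N}[a_2,\ldots,a_n] = a_n\mathcal{N}[a_2,\ldots,a_{n-1}] + \mathcal{N}[a_2,\ldots,a_{n-2}]$ and similarly for the sequence starting at $a_3$. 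Substituting, regrouping the terms containing the factor $a_n$, and reassembling via the head recursion in the other direction then gives $a_n\big(a_1\mathcal{N}[a_2,\ldots,a_{n-1}]+\mathcal{N}[a_3,\ldots,a_{n-1}]\big) + \big(a_1\mathcal{N}[a_2,\ldots,a_{n-2}]+\mathcal{N}[a_3,\ldots,a_{n-2}]\big) = a_n\mathcal{N}[a_1,\ldots,a_{n-1}] + \mathcal{N}[a_1,\ldots,a_{n-2}]$, as desired. One must handle the small-$n$ edge cases ($n=2,3$) separately so that the bracketed expressions reduce correctly to $\mathcal{N}[\,]=1$ where appropriate.

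Alternatively, and perhaps more cleanly, the tail identity follows immediately from the head identity combined with Lemma \ref{lem:FlipContFraction}: applying $\mathcal{N}[a_1,\ldots,a_n] = \mathcal{N}[a_n,\ldots,a_1]$, then the head recursion to the reversed sequence, then Lemma \ref{lem:FlipContFraction} again to each resulting shorter numerator, converts the first identity into the second with no computation at all. I would likely present this short derivation as the main proof. I do not expect any genuine obstacle here — the only thing requiring a little care is bookkeeping of the degenerate cases $\mathcal{N}[\,]=1$ and ensuring the indices in the shortened sequences never go out of range; the content is entirely the standard two-term recurrence for continuant polynomials.
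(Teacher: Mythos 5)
Your proposal is correct, and in fact there is nothing in the paper to compare it against: the paper records Lemma \ref{lem:ContFracRecurrence} as ``well-known'' and gives no proof. Your argument is the standard continuant argument — the head recursion read off from $[a_1,\ldots,a_n] = a_1 + 1/[a_2,\ldots,a_n]$, and the tail recursion obtained either by induction or, more cleanly, from the head recursion together with the reversal identity — and the cleaner route creates no circularity, since the paper likewise takes Lemma \ref{lem:FlipContFraction} as known. The only point worth one extra line: you define $\mathcal{N}$ and $\mathcal{D}$ by the recursion, whereas the paper's $\mathcal{N}$ is \emph{the} numerator of the continued fraction; to identify the two you should remark that the fraction produced by the recursion is already in lowest terms, which follows from the determinant identity
\[
\mathcal{N}[a_1,\ldots,a_n]\,\mathcal{D}[a_2,\ldots,a_n] - \mathcal{D}[a_1,\ldots,a_n]\,\mathcal{N}[a_2,\ldots,a_n] = (-1)^n ,
\]
itself an immediate induction from the same head recursion.
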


It is clear that $[a_1,\ldots,a_n,1] = [a_1,\ldots,a_n+1]$.  Combining this with Lemma \ref{lem:FlipContFraction} gives us a result for continued fractions with 1 as the first entry.

\begin{lemma}\label{lem:OneAtEnd}
Let $a_1,\ldots,a_n$ be positive integers. Then,\[
\mathcal{N}[1,a_1,\ldots,a_n] = \mathcal{N}[a_1+1,\ldots,a_n].
\]
\end{lemma}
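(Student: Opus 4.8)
The plan is to derive Lemma \ref{lem:OneAtEnd} by stringing together the two identities the excerpt has already supplied: the palindrome invariance of the numerator (Lemma \ref{lem:FlipContFraction}) and the absorption rule $[a_1,\ldots,a_n,1] = [a_1,\ldots,a_n+1]$, which the excerpt notes is clear (it follows from $a_n + \tfrac11 = a_n+1$ read from the bottom of the fraction). The key observation is that the numerator function $\mathcal{N}$ depends only on the continued fraction expression as a list, so any manipulation that preserves the value of the continued fraction — in particular the absorption rule — preserves $\mathcal{N}$ as well.

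First I would apply Lemma \ref{lem:FlipContFraction} to the left-hand side to reverse the list:
\[
\mathcal{N}[1,a_1,\ldots,a_n] = \mathcal{N}[a_n,\ldots,a_1,1].
\]
Next I would use the absorption identity $[a_n,\ldots,a_1,1] = [a_n,\ldots,a_1+1]$, together with the fact that equal continued fractions have equal numerators (since $\mathcal{N}$ is just the numerator of the reduced — or here, formally evaluated — fraction), to rewrite this as
\[
\mathcal{N}[a_n,\ldots,a_1,1] = \mathcal{N}[a_n,\ldots,a_2,a_1+1].
\]
Finally I would reverse the list once more via Lemma \ref{lem:FlipContFraction}:
\[
\mathcal{N}[a_n,\ldots,a_2,a_1+1] = \mathcal{N}[a_1+1,a_2,\ldots,a_n],
\]
which is exactly the claimed right-hand side. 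Chaining these three equalities completes the proof.

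The only point that needs a little care — and the closest thing to an obstacle — is justifying that $\mathcal{N}[a_n,\ldots,a_1,1] = \mathcal{N}[a_n,\ldots,a_1+1]$, i.e. that the absorption move on the continued fraction, which a priori is stated at the level of the rational value, descends to the numerator $\mathcal{N}$. One clean way to see this is to invoke the second recurrence in Lemma \ref{lem:ContFracRecurrence}, peeling entries off the right end: with last entry $1$ we get $\mathcal{N}[a_n,\ldots,a_1,1] = 1\cdot\mathcal{N}[a_n,\ldots,a_1] + \mathcal{N}[a_n,\ldots,a_2]$, and with last entry $a_1+1$ we get $\mathcal{N}[a_n,\ldots,a_2,a_1+1] = (a_1+1)\mathcal{N}[a_n,\ldots,a_2] + \mathcal{N}[a_n,\ldots,a_3]$; another application of the recurrence to $\mathcal{N}[a_n,\ldots,a_1]$ shows these two expressions agree. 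So the lemma reduces to two invocations of Lemma \ref{lem:FlipContFraction} and a short bookkeeping computation with Lemma \ref{lem:ContFracRecurrence}; there is no genuine difficulty, only the need to phrase the absorption step at the level of $\mathcal{N}$ rather than of the fraction's value.
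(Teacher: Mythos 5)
Your proof is correct and follows essentially the same route as the paper, which likewise obtains the lemma by combining the absorption identity $[a_1,\ldots,a_n,1]=[a_1,\ldots,a_n+1]$ with Lemma~\ref{lem:FlipContFraction}. Your extra care in checking, via Lemma~\ref{lem:ContFracRecurrence}, that the absorption move holds at the level of $\mathcal{N}$ is a nice touch but does not change the argument.
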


The final result in this section will be useful in the proof of Theorem \ref{thm:BandMarkov}.

\begin{lemma}\label{lem:SwapPlus1}
Let $k \geq 2$ and let $a_1,\ldots,a_k$ be positive integers. Then,
\[
\mathcal{N}[a_2,\ldots,a_k+1, a_k , \ldots, a_1] = \mathcal{N}[a_1,\ldots,a_k+1,a_k,\ldots,a_2] + c_k
\]
where $c_k$ is given by \[
c_k = \begin{cases} 1 & k \text{ is even}\\
-1 & k \text{ is odd}
\end{cases}
\]
\end{lemma}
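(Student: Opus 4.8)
The plan is to prove the identity by establishing a symmetric ``determinant-type'' formula for the difference of the two numerators and then evaluating it. Write $N = \mathcal{N}[a_1,\dots,a_k+1,a_k,\dots,a_1]$ (this palindrome is not literally what appears, but is a useful reference point), and set
\[
L = \mathcal{N}[a_2,\dots,a_k+1,a_k,\dots,a_1], \qquad R = \mathcal{N}[a_1,\dots,a_k+1,a_k,\dots,a_2].
\]
The key observation is that $L$ and $R$ are numerators of two continued fractions that differ only by which end has the extra initial/final entry $a_1$ deleted, and whose ``inner'' strings $a_2,\dots,a_k+1,a_k,\dots,a_2$ agree. So I would first reduce to understanding how deleting a single entry from one end versus the other affects the numerator. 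The cleanest tool here is the transfer-matrix (or $2\times 2$ matrix) encoding of continued fractions: recall that
\[
\begin{pmatrix}\mathcal{N}[a_1,\dots,a_n] & \mathcal{N}[a_1,\dots,a_{n-1}]\\[2pt] \mathcal{N}[a_2,\dots,a_n] & \mathcal{N}[a_2,\dots,a_{n-1}]\end{pmatrix} = \prod_{i=1}^{n}\begin{pmatrix}a_i & 1\\ 1 & 0\end{pmatrix},
\]
and every matrix in the product has determinant $-1$, so the determinant of the left side is $(-1)^n$. This is exactly the source of the sign $c_k$.

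The main steps: (1) Let the string be $\mathbf{w} = (a_1,\dots,a_{k-1},a_k+1,a_k,a_{k-1},\dots,a_1)$, of length $2k$, and let $\mathbf{w}' = (a_2,\dots,a_{k-1},a_k+1,a_k,a_{k-1},\dots,a_2)$ be its ``core'' of length $2k-2$ obtained by stripping the outer $a_1$ from each side. Then $R = \mathcal{N}[a_1,\mathbf{w}']$ and $L = \mathcal{N}[\mathbf{w}',a_1]$ (using Lemma~\ref{lem:FlipContFraction} to reverse one of them, noting $\mathbf{w}'$ is a palindrome so its reversal is itself). (2) By Lemma~\ref{lem:ContFracRecurrence}, $R = a_1 \mathcal{N}[\mathbf{w}'] + \mathcal{N}[(\mathbf{w}')_{\ge 2}]$ and $L = a_1 \mathcal{N}[\mathbf{w}'] + \mathcal{N}[(\mathbf{w}')_{\le 2k-3}]$, where $(\mathbf{w}')_{\ge 2}$ drops the first entry of $\mathbf{w}'$ and $(\mathbf{w}')_{\le 2k-3}$ drops the last. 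Hence
\[
L - R = \mathcal{N}[(\mathbf{w}')_{\le 2k-3}] - \mathcal{N}[(\mathbf{w}')_{\ge 2}],
\]
and since $\mathbf{w}'$ is a palindrome, $(\mathbf{w}')_{\le 2k-3}$ is the reversal of $(\mathbf{w}')_{\ge 2}$ — so by Lemma~\ref{lem:FlipContFraction} these two numerators are \emph{equal}, giving $L - R = 0$?! That would be wrong, so the palindrome has to be broken correctly: in fact $\mathbf{w}'$ ends in $\dots,a_k+1,a_k,a_{k-1},\dots,a_2$ while it starts $a_2,\dots,a_{k-1},a_k+1,a_k,\dots$ — the ``$+1$'' sits one slot off-center, so $\mathbf{w}'$ is \emph{not} a palindrome. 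This is the crux: I must track exactly where the asymmetry ($a_k+1$ versus $a_k$) lies and feed it into the matrix identity.

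So the honest route is step (1)$'$: write $M(b) = \begin{pmatrix} b & 1\\ 1 & 0\end{pmatrix}$, let $A = M(a_{k-1})\cdots M(a_1)$ (a product of $k-1$ matrices) and let $P = M(a_k+1)M(a_k)$. Then the $2k$-string $\mathbf{w}$ corresponds to $A^{\mathrm{rev}} \cdot (\text{middle}) \cdot A$ appropriately, and $L$, $R$ are specific matrix entries of $A^{T}$-conjugates of $P$ (using that $M(b)^T = M(b)$ and that reversal of a product transposes it, which for these symmetric matrices just reverses the order). The difference $L-R$ then becomes the difference of the $(1,1)$ and, say, $(1,2)$-or-$(2,1)$ entries of a single matrix $X = A^{T} P' A$ for a suitable $2\times 2$ block $P'$ recording the $+1$ perturbation; because $M(a_k+1) = M(a_k) + E_{11}$ with $E_{11} = \begin{pmatrix}1&0\\0&0\end{pmatrix}$, the perturbation $P' = M(a_k+1)M(a_k) - M(a_k)M(a_k+1)$ (the commutator-type object measuring the two placements of the $+1$) is a rank-one matrix with a very simple form. (3) Evaluate: $X := A^{T}(M(a_k+1)M(a_k) - M(a_k)M(a_k+1))A$ where the bracket equals $E_{11}M(a_k) - M(a_k)E_{11} = \begin{pmatrix}0 & 1\\ -1 & 0\end{pmatrix} =: J$, the standard symplectic form. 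Then $L - R$ is a single entry of $A^{T} J A$. But $A^{T} J A = \det(A)\, J$ for any $2\times 2$ matrix $A$ — this is the defining property of $J$ — and $\det A = \det M(a_{k-1})\cdots \det M(a_1) = (-1)^{k-1}$. Reading off the appropriate entry of $(-1)^{k-1} J$ gives $\pm 1$ with the sign $(-1)^{k-1}$, i.e. $+1$ when $k$ is even and $-1$ when $k$ is odd, which is exactly $c_k$.

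The main obstacle is step (2)$'$: carefully matching the continued-fraction numerators $L$ and $R$ to the correct matrix entries (top-left of the product, versus the other entries), getting the reversal/transpose bookkeeping right so that the ``extra $a_1$'' on each side really does cancel except for the central $J$-term, and confirming that the leftover is $A^{T} J A$ rather than, say, $A J A^{T}$ or a transpose thereof (these differ only in sign of the off-diagonal entry, which is precisely what determines whether we read off $+c_k$ or $-c_k$). Once the commutator is isolated as $J$ and the identity $A^{T} J A = (\det A) J$ is invoked, the sign computation is immediate. I would double-check the final sign by testing $k=2$ by hand: there $\mathcal{N}[a_2, a_2+1, a_2, a_1]$ versus $\mathcal{N}[a_1,a_2+1,a_2,a_2]$, which should differ by $c_2 = +1$.
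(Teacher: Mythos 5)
Your proposal is essentially correct, and it takes a genuinely different route from the paper. The paper argues by induction on $k$: expanding both sides at the outer entry $a_1$ via Lemma~\ref{lem:ContFracRecurrence}, the common terms $a_1\mathcal{N}[a_2,\ldots,a_k+1,a_k,\ldots,a_2]$ cancel, and the leftover difference is exactly the statement of the lemma for $k-1$ in the variables $a_2,\ldots,a_k$, giving $c_k=-c_{k-1}$ with the base case $k=2$ computed by hand. Your transfer-matrix argument is non-inductive and explains conceptually where the $\pm 1$ comes from (it is a determinant); the price is that you must import, or re-prove by an easy induction from Lemma~\ref{lem:ContFracRecurrence}, the standard identity expressing the matrix of numerators as the product of the $M(a_i)$, which the paper never states.

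The bookkeeping you flagged as the main obstacle does close up, and more cleanly than your step $(2)'$ suggests. With $M(b)=\bigl(\begin{smallmatrix} b & 1\\ 1 & 0\end{smallmatrix}\bigr)$, put $A=M(a_{k-1})\cdots M(a_1)$, $P=M(a_k+1)M(a_k)$, and $X=A^{T}PA$. Then $X$ is the matrix product for the length-$2k$ word $(a_1,\ldots,a_{k-1},a_k+1,a_k,a_{k-1},\ldots,a_1)$, and by the standard identity its $(2,1)$ entry is the numerator of the word with the first letter deleted, while its $(1,2)$ entry is the numerator of the word with the last letter deleted. Hence $L=X_{21}$, $R=X_{12}$, and
\[
L-R=(X-X^{T})_{21}=\bigl(A^{T}(P-P^{T})A\bigr)_{21}=\bigl(A^{T}JA\bigr)_{21}=\det(A)\,J_{21}=(-1)^{k-1}\cdot(-1)=(-1)^{k}=c_k,
\]
where $J=\bigl(\begin{smallmatrix} 0 & 1\\ -1 & 0\end{smallmatrix}\bigr)$ is your commutator. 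Two small corrections. First, the sign: $\det A=(-1)^{k-1}$ equals $+1$ for $k$ odd, not $k$ even; the missing factor is $J_{21}=-1$, precisely the off-diagonal subtlety you anticipated, and it is what produces $(-1)^{k}$. Second, your closing $k=2$ sanity check uses the wrong strings: for $k=2$ the lemma compares $\mathcal{N}[a_2+1,a_2,a_1]$ with $\mathcal{N}[a_1,a_2+1,a_2]$ (the words have length $2k-1=3$), not the length-four words you wrote; those coincide when $a_1=a_2$, so the test as written would return $0$ and misleadingly suggest a failure.
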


\begin{proof}
We prove this by induction. For the base case, we directly compute \[
\mathcal{N}[a_1, a_2, a_2+1] = a_1(a_2^2 + a_2+1)+a_2 + 1
\]
and \[
\mathcal{N}[a_1,a_2+1,a_2] = a_1(a_2^2 + a_2+1)+a_2.
\]

Now, we assume the claim for $k-1$ and analyze the case for $k$. We expand the first term using Lemma \ref{lem:ContFracRecurrence} , \begin{align*}
\mathcal{N}[a_2,\ldots,a_k+1, a_k, \ldots, a_1] \\= a_1 \mathcal{N}[a_2,\ldots,a_k+1, a_k, \ldots, a_2] + \mathcal{N}[a_2, \ldots,a_k+1, a_k, \ldots, a_3]\\
=a_1 \mathcal{N}[a_2,\ldots,a_k+1, a_k, \ldots, a_2] + \mathcal{N}[a_3, \ldots,a_k+1, a_k , \ldots, a_2] - c_{k-1},
\end{align*}

where the last equality comes from applying the inductive hypothesis.

Then we also apply Lemma \ref{lem:ContFracRecurrence} to the continued fraction on the lefthand side.
\begin{align*}
 \mathcal{N}[a_1, a_2,\ldots,a_k+1, a_k, \ldots, a_2]  \\
 = a_1 \mathcal{N}[a_2,\ldots,a_k, a_k + 1, \ldots, a_2] + \mathcal{N}[a_3, \ldots,a_k+1, a_k, \ldots, a_2]\\
\end{align*}

Thus, we see that $\mathcal{N}[a_2,\ldots,a_k+1,a_k,\ldots,a_1] = \mathcal{N}[a_1,\ldots,a_k+1,a_k,\ldots,a_2] - c_{k-1}$.  Therefore,  $c_k = -c_{k-1}$; since the base case showed $c_2 = 1$,  we are done.
\end{proof}

\section{Algorithm}\label{sec:algorithm}

Recall from Section \ref{subsec:Label} that we index the generalized Markov numbers with (reduced) rational numbers $\frac{p}{q}$ in the interval $(0,1]$. However, given such a rational number $\frac{p}{q}$, it is not immediately clear what the generalized Markov number $m_{\frac{p}{q}}$ is. In this section, we give a direct way to compute $m_{\frac{p}{q}}$. This method is inspired by the shape of the snake graph $G_{\gamma_{\frac{p}{q}}, T}$ for the arc, $\gamma_{\frac{p}{q}}$, which is associated to $m_{\frac{p}{q}}$. Such an arc is guaranteed in Corollary \ref{cor:Gyoda}. However, it is more compact to give the continued fraction $[a_1,\ldots,a_n]$ associated to $G_{\gamma_{\frac{p}{q}}, T}$, so we use this language instead.  {\c{C}}anak{\c{c}}{\i} and Schiffler give a similar construction in \cite{CS}.

In what follows, we will describe a function $\mathbf{f}$ which maps from $\mathbb{Q} \cap (0,1]$ to sequences from the alphabet $\{-,+\}$ of varied length. In particular,  $\mathbf{f}(\frac{p}{q}) = (f_0,\ldots,f_{4(p+q)-6})$ where $f_i \in \{-,+\}$. In Theorem \ref{thm:GenMarkovContFrac}, we show that $\mathbf{f}$ is the sign sequence associated to $G_{\gamma_{\frac{p}{q}},T}$.

Throughout,  we fix the lattice given by all lines through integral points with slopes $0, 1,$ and $\infty$.  We will consider each line segment between consecutive pairs of integer points as a distinct arcs.  

Let $\gamma_{\frac{p}{q}}$ denote the line segment between $(0,0)$ and $(q,p)$, oriented to start at $(0,0)$ and end at $(q,p)$.  We can describe $\mathbf{f}(\frac{p}{q}) = (f_0,\ldots,f_{4(p+q)-6})$ as the following. We always set $f_0 = -$ and $f_{4(p+q)-6} = +$. For $1 \leq i \leq 2(p+q)-3$, let $\sigma_i$ be the $i$-th line segment in the lattice which $\gamma_{\frac{p}{q}}$ crosses, and let $s_i$ be this intersection point. The odd-indexed entries keep track of whether $s_i$ is closer to endpoint of $\sigma_i$ which lies to the right or to the left of $\gamma_{\frac{p}{q}}$; if it is closer to the endpoint on the right, we assign $f_{2i+1} = -$ and otherwise we assign $f_{2i+1} = +$. There is only one intersection point which is at the midpoint of an arc, and we will see we can choose either sign here. The even-indexed entries record whether the endpoint shared by $\sigma_i$ and $\sigma_{i+1}$ lies to the right or left of $\gamma_{\frac{p}{q}}$. Again, if this endpoint is to the right, then assign $f_{2i} = -$, and  assign $f_{2i} = +$ if it lies to the left. 

In the following, we give formulas to directly compute $\mathbf{f}(\frac{p}{q})$. First, we recall notation given in \cite{CS}. 
\begin{align*}
    v_1 = \lfloor \frac{q}{p} \rfloor \\
    v_i = \lfloor \frac{qi}{p} \rfloor - (v_1 + \cdots + v_{i-1}) \quad 1 < i < p\\
    v_p = (q-1) - (v_1 + \cdots + v_{p-1})
\end{align*}

The quantity $v_i$ tell us how many vertical lines $\gamma_{\frac{p}{q}}$ crosses between its crossing of the horizontal lines $y = i-1$ and $y = i$. For $0 < i < p$, we define \[
 \widetilde{v_i} = \lfloor \frac{qi}{p} \rfloor+ i
 \]
  The $\widetilde{v_i}$ give information about which arcs $\sigma_j$ are horizontal. In particular $\sigma_j$ is horizontal if and only if $j = 2\widetilde{v_i}$.  It will later be convenient to set $\widetilde{v_0} = 0$ and $\widetilde{v_p} = p+q-1$. 
 
As in \cite{CS}, we can compute the even-indexed terms $f_{2i}$ using the $v_i$ and $\widetilde{v_i}$. First we specify $f_{2i}$ if there exists $0 < j< p$ such that $2\widetilde{v_j} - 2 \leq i \leq 2\widetilde{v_j} + 1$,. This corresponds to the crossings near a crossing of $\gamma_{\frac{p}{q}}$ and a horizontal line segment; in particular, since the slope is less than 1, we know that the arcs crossed by $\gamma_{\frac{p}{q}}$ near a crossing with a horizontal arc will have the pattern: vertical, diagonal, horizontal, diagonal, vertical. Thus, we set  \begin{align*}
f_{2(2\widetilde{v_j} - 2)} = f_{2(2\widetilde{v_j} - 1)} = +\\
f_{4\widetilde{v_j}} = f_{2(2\widetilde{v_j}+1)} = -\\
\end{align*}

\begin{center}
\begin{tikzpicture}[scale = 1.3]
\draw (0,0) -- (0,1) -- (1,1) -- (1,2);
\draw (1,0) -- (0,1);
\draw (0,2) -- (1,1);
\draw[red, thick] (-0.5,0.5) -- (1.5,1.5);
\end{tikzpicture}
\end{center}

The rest of the even-indexed entries correspond to $\gamma_{\frac{p}{q}}$ crossing an alternating sequence of vertical and diagonal edges. Precisely, if for some $0 \leq j < p$, $i$ satisfies $2\widetilde{v_j} + 1 < i < 2\widetilde{v_{j+1}} - 2$, then \[
f_{2i} = \begin{cases} - & i \text{ is even} \\ + & \text{otherwise.} \end{cases}
\]

\begin{center}
\begin{tikzpicture}[scale = 1.3]
\draw (0,0) -- (0,1) -- (1,0) -- (1,1);
\draw[red, thick] (-0.5,0.3) -- (1.5,0.8);
\end{tikzpicture}
\end{center}

Finally, we also set $f_0 = f_2 = -$ and $f_{4(p+q)-6}  = +$. 

Next we turn to the odd-indexed entries. The entries $f_j$ for $j \equiv 3 \pmod{4}$ of $\mathbf{f}(\frac{p}{q})$ record information about the crossing points of $\gamma_{\frac{p}{q}}$ and horizontal and vertical line segments. We know that the segment $\sigma_{2\widetilde{v_j}}$ lies on the line $y = j$. Thus, we compute whether the intersection of $\gamma_{\frac{p}{q}}$ and $\sigma_{2\widetilde{v_j}}$ is closer to the right or left endpoint with the following, 
\[
f_{4(\widetilde{v_j} - 1) + 3} = \begin{cases}
- &\lceil \frac{jq}{p} \rceil-  \frac{jq}{p}  < \frac12 \\
+ & \text{ otherwise.} \\
\end{cases}
\]

All other entries $f_i$ with $i \equiv 3 \pmod{4}$ record information about the crossing of $\gamma_{\frac{p}{q}}$ and a vertical edge. If $i \neq \widetilde{v_j}$ for any $1 \leq j \leq p-1$, let $i'$ be the largest integer in $[0,p-1]$ such that $i > \widetilde{v_{i'}} $. This implies that arc $\sigma_{2j}$ lies on the line $x = i - i'$. Therefore, we set \[
f_{4(i - 1) + 3} = \begin{cases}
- & \ \frac{(i-i')p}{q}  - \lfloor\frac{(i-i')p}{q}\rfloor< \frac12 \\
+ & \text{ otherwise.} \\
\end{cases}
\]

Finally, we look at the intersections of $\gamma_{\frac{p}{q}}$ and diagonal arcs. Note that all arcs $\sigma_j$ for odd $j$ are diagonal. We define $w_i$ for $1 \leq i \leq p+q-1$,
 
 \[
 w_i = \frac{qi}{p+q}.
 \]
 
 Since $\gamma_{\frac{p}{q}}$ and $y = -x + i$ intersect at $(w_i, \frac{p}{q} w_i)$, we set 
 \[
 f_{4(i-1)+1} = \begin{cases} - & \lceil w_i \rceil - w_i < \frac12 \\
 + & \text{ otherwise.}\\
 \end{cases}
 \]
 
In Table \ref{table:contfrac}, we give the continued fractions for the sign sequences $\mathbf{f}(\frac{p}{q})$ and we give the numerators of these continued fractions in Table \ref{table:GenMarkov}.  The values associated to $(p,q)$ with $\gcd(p,q) > 1$ in both tables will be explained in Section \ref{sec:extendalgorithm}.

\begin{table}
\renewcommand{\arraystretch}{1.5}
\centering
\begin{tabular}{|c||c|c|c|c|c|c|c|}
     \hline
     $p \backslash q$ & 1&2&3&4&5&6&7 \\\hline\hline
     1 & \scalebox{1}{$3$} & \scalebox{1}{$13$} & \scalebox{1}{$61$} & \scalebox{1}{$291$} & \scalebox{1}{$1393$} & \scalebox{1}{$6673$} & \scalebox{1}{$31971$}\\\hline
     2 & & \scalebox{1}{$51$} & \scalebox{1}{$217$} & \scalebox{1}{$1001$} & \scalebox{1}{$4863$} & \scalebox{1}{$22,265$} & \scalebox{1}{$106,153$}\\\hline
     3 & & & \scalebox{1}{$846$} & \scalebox{1}{$3673$} & \scalebox{1}{$16,693$} & \scalebox{1}{$77,064$} & \scalebox{1}{$360,517$} \\\hline
     4 & & & & \scalebox{1}{$14,637$} & \scalebox{1}{$62,221$} & \scalebox{1}{$282,534$} & \scalebox{1}{$1,285,131$} \\\hline
     5 & & & & & \scalebox{1}{$247,965$} & \scalebox{1}{$1,054,081$} & \scalebox{1}{$4,778,353$} \\\hline
     6 & & & & & & \scalebox{1}{$4,200,768$} & \scalebox{1}{$17,857,153$}\\\hline
     7&&&&&&& \scalebox{1}{$71,165,091$} \\\hline
\end{tabular}
\caption{Numbers  $m_{(q,p)}$ for small values of $p \leq q \leq 7$.  When $\gcd(p,q) = 1$, these are generalized Markov numbers $m_{\frac{p}{q}}$.  For discussion of the other values,  see Section \ref{sec:extendalgorithm}}\label{table:GenMarkov}
\end{table}

\begin{table}
\begin{tabular}{|c||c|c|c|c|c|c|}
    \hline
    $p \backslash q$&1&2&3&4&5&6 \\ \hline \hline
    1 & [3] & [3,4]&[4,1,2,4]&[4,1,2,3,1,4] &[4,1,3,1,2,3,1,4] &[4,1,3,1,2,3,1,3,1,4]\\\hline
    2 & & [3,5,3] & [3,4,5,3] & [3,4,5,1,3,3] & [4,2,1,4,5,1,2,4] & [4,2,1,4,5,1,3,2,1,4]\\\hline
    3 & & & [3,5,3,5,3] & [3,5,3,4,5,3] & [3,4,5,1,2,5,4,3] & [3,4,5,1,2,4,5,1,2,4]\\\hline
    4 &&&& [3,5,3,5,3,5,3] & [3,5,3,4,5,3,5,3] & [3,4,5,3,5,1,2,5,4,3]\\\hline
    5 &&&&& [3,5,3,5,3,5,3,5,3]&[3,5,3,5,3,4,5,3,5,3] \\\hline
    6 &&&&&&[3,5,3,5,3,5,3,5,3,5,3] \\\hline
\end{tabular}
\caption{Continued fractions given from sign sequences $\mathbf{f}(q,p)$.  In each case, $m_{(q,p)}$ is the numerator of this continued fraction.}\label{table:contfrac}
\end{table} 
 
 For any $\frac{p}{q}$, the sequence $\mathbf{f}(\frac{p}{q}) = (f_0,f_1,\ldots,f_{4(p+q) - 6})$ is anti-symmetric. That is, for $i < 2(p+q) - 3$, $f_{i} = - f_{(4(p+q)-6)-i}$. Moreover, the middle term of the sequence $\mathbf{f}(\frac{p}{q})$, $f_{2(p+q) - 3}$, always corresponds to a crossing which is the midpoint of the segment of $\sigma_{p+q-1}$; no other crossing can occur at a midpoint, or else $\gamma_{\frac{p}{q}}$ would go through other integral points besides $(0,0)$ and $(q,p)$. By Lemma \ref{lem:FlipContFraction}, since the rest of the sequence is anti-symmetric, the choice of sign at the midpoint does not matter. 

In Theorem \ref{thm:GenMarkovContFrac}, we show that $\mathbf{f}(\frac{p}{q})$ exactly gives the sign sequence for the snake graph $G_{\gamma,T}$ which encodes the generalized Markov number $m_{\frac{p}{q}}$. Recall such an arc $\gamma$ and snake graph $G_{\gamma,T}$ is guaranteed in Corollary \ref{cor:Gyoda}.

\begin{theorem}\label{thm:GenMarkovContFrac}
Let $p,q \in \mathbb{Z}$ be such that $p < q$ and $\gcd(p,q) = 1$. Let $\mathbf{f}(\frac{p}{q})$ be the sign function on $\frac{p}{q}$, and let $[a_1,\ldots,a_m]$ be the continued fraction from $\mathbf{f}(\frac{p}{q})$. Then, if $m_{\frac{p}{q}}$ is the generalized Markov number associated to $\frac{p}{q}$,\[
m_{\frac{p}{q}} = \mathcal{N}([a_1,\ldots,a_m]) = m(G_{\overline{\gamma}_{\frac{p}{q}},T_0}),
\]
where $\overline{\gamma}_{\frac{p}{q}}$ is the arc on $\mathcal{O}_3$ associated to the generalized Markov number $m_{\frac{p}{q}}$ and $T_0$ is the initial triangulation of $\mathcal{O}_3$. 
\end{theorem}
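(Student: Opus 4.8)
The plan is to reduce the theorem to one combinatorial statement: that the sequence $\mathbf{f}(\tfrac pq)$ \emph{is} the sign sequence of the snake graph $G_{\overline\gamma_{p/q},T_0}$. Granting this, the equality $m_{p/q}=m(G_{\overline\gamma_{p/q},T_0})$ is Corollary~\ref{cor:Gyoda} together with the definition of the labeling of generalized Markov numbers by $\mathbb{Q}\cap(0,1]$: the arc $\overline\gamma_{p/q}$ is by definition the arc obtained from $T_0$ by the flip sequence producing ``slope $\tfrac pq$'' (this is well defined by Lemma~\ref{lem:SlopeArcs}), and the generalized Markov number sitting at the corresponding node of the exchange tree is exactly $m(G_{\overline\gamma_{p/q},T_0})$. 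The equality $m(G_{\overline\gamma_{p/q},T_0})=\mathcal{N}([a_1,\dots,a_m])$ is then Theorem~\ref{thm:ContFracSnakeGraph}: once $\mathbf{f}(\tfrac pq)$ is identified as the sign sequence of $G_{\overline\gamma_{p/q},T_0}$, that graph coincides with $\mathcal{G}[a_1,\dots,a_m]$, whose number of perfect matchings is $\mathcal{N}[a_1,\dots,a_m]$. So the entire content of the theorem is the sign-sequence claim.

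To prove the sign-sequence claim I would first pin down the shape of $G_{\overline\gamma_{p/q},T_0}$. Although $\mathcal{O}_3$ is not covered by the once-punctured torus, the two share the same triangulation-and-flip combinatorics (Lemma~\ref{lem:SlopeArcs}), so the crossings of $\overline\gamma_{p/q}$ with the three pending arcs of $T_0$ may be tracked in the same lattice that models the torus and is fixed in Section~\ref{sec:algorithm}: following the approach of \cite{CS}, the arc $\overline\gamma_{p/q}$ is represented by the segment $\gamma_{p/q}$ from $(0,0)$ to $(q,p)$, which meets the lattice in the $2(p+q)-3$ points $s_1,\dots,s_{2(p+q)-3}$ along the segments $\sigma_1,\dots,\sigma_{2(p+q)-3}$. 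The one difference from the surface case is that each $\sigma_i$ represents a \emph{pending} arc of order three, so by the orbifold snake-graph construction of \cite{BK} the passage of $\overline\gamma_{p/q}$ through $\sigma_i$ contributes a two-tile gadget rather than a single tile; this accounts for the tile count $4(p+q)-6 = 2\bigl(2(p+q)-3\bigr)$. I would then read off the sign function geometrically: on a snake graph from a surface or orbifold, the sign of an internal edge records on which side of the oriented curve the corresponding triangulation vertex lies, and, for the two tiles produced by crossing a pending arc, it additionally records which half of $\sigma_i$ contains $s_i$. Comparing this with the definition of $\mathbf{f}(\tfrac pq)$, whose odd-indexed entries record which half of $\sigma_i$ contains $s_i$, whose even-indexed entries record on which side of $\gamma_{p/q}$ the endpoint shared by $\sigma_i$ and $\sigma_{i+1}$ lies, and whose boundary entries $f_0=-$ and $f_{4(p+q)-6}=+$ match the sign-function conventions, shows that the two sequences agree entrywise. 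The closed-form formulas of the section are then verified by elementary coordinate geometry: since $\tfrac pq<1$, the local crossing pattern around each horizontal crossing is vertical, diagonal, horizontal, diagonal, vertical (giving the prescribed block of signs near $2\widetilde{v_j}$) and elsewhere the crossed segments alternate vertical and diagonal; and comparing $\tfrac12$ with the fractional parts of the relevant coordinates of $\gamma_{p/q}\cap\{y=j\}$, $\gamma_{p/q}\cap\{x=i-i'\}$, and $\gamma_{p/q}\cap\{y=-x+i\}$ (namely $\tfrac{jq}{p}$, $\tfrac{(i-i')p}{q}$, and $w_i=\tfrac{qi}{p+q}$) reproduces exactly the displayed case distinctions.

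I expect the main obstacle to be the first step: checking from \cite{BK} that the gadget attached at each pending-arc crossing is precisely a two-tile strip, and that the local orientation and sign conventions propagate correctly through the chain of gadgets, so that the global sign sequence of $G_{\overline\gamma_{p/q},T_0}$ is indeed what the formulas predict; everything afterward is bookkeeping with floors and fractional parts. A secondary point requiring care is the single intersection $s_{p+q-1}$, which lies at the midpoint of the diagonal segment $\sigma_{p+q-1}$ --- no other crossing can be a midpoint, or else $\gamma_{p/q}$ would pass through a lattice point other than $(0,0)$ and $(q,p)$ --- so the sign there is not forced by the geometry; but the rest of $\mathbf{f}(\tfrac pq)$ is anti-symmetric, reflecting the fact that reversing the orientation of $\overline\gamma_{p/q}$ reverses and negates the sign sequence, so by Lemma~\ref{lem:FlipContFraction} either choice of the middle sign yields the same numerator $\mathcal{N}[a_1,\dots,a_m]$, and the ambiguity does no harm.
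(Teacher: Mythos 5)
Your reduction of the theorem to the single claim that $\mathbf{f}(\tfrac pq)$ equals the sign sequence of $G_{\overline{\gamma}_{\frac{p}{q}},T_0}$ is exactly the paper's reduction, and your handling of the midpoint crossing (anti-symmetry plus Lemma \ref{lem:FlipContFraction}) also matches. The gap lies in how you propose to establish that sign-sequence claim. You want to read the signs of the orbifold snake graph directly off the straight segment from $(0,0)$ to $(q,p)$ in the planar lattice, asserting that each crossing of a lattice segment corresponds to a two-tile gadget from \cite{BK} whose internal-edge signs record the sides and halves in the obvious way. But the planar lattice is the universal cover of the once-punctured torus, not of $\mathcal{O}_3$: the arc $\overline{\gamma}_{\frac{p}{q}}$ is only defined through a flip sequence, and the statement that it crosses the pending arcs of $T_0$ in the order, multiplicity, and local configuration predicted by the lattice segment is precisely the nontrivial content of the theorem. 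The remark that the two settings ``share the same triangulation-and-flip combinatorics (Lemma \ref{lem:SlopeArcs})'' governs how slopes transform under flips, but it does not by itself produce a crossing-by-crossing dictionary between a straight line in the plane and an arc on the orbifold, nor does it tell you how the sign conventions of \cite{BK} behave across the chain of pending-arc gadgets. You explicitly flag this as ``the main obstacle'' and say you expect it to work out, but no argument is given, so the central step of the proof is missing rather than merely technical.

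For contrast, the paper never constructs such a direct dictionary; it argues by induction on the flip sequence. When $\tau_{\frac{a+c}{b+d}}$ arises by a flip in the triangulation $(\tau_{\frac{a}{b}},\tau_{\frac{c}{d}},\tau_{\frac{a+c}{b+d}})$, an initial section of $\tau_{\frac{a+c}{b+d}}$ is homotopic to $\tau_{\frac{c}{d}}$, so the first $4(c+d)-5$ entries of the sign sequence of $G_{\tau_{\frac{a+c}{b+d}},T_0}$ agree with those of $G_{\tau_{\frac{c}{d}},T_0}$; since pending arcs are loops, their snake graphs are symmetric about the halfway point, so this more-than-half agreement determines the whole sequence. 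On the lattice side the same recursion holds because the triangle with vertices $(0,0)$, $(d+b,a+c)$, $(d,c)$ has no interior lattice points, and $\mathbf{f}$ is anti-symmetric. Together with the base cases $\tfrac11$, $\tfrac12$ and a separate analysis of the exceptional case $\tfrac ab=\tfrac1n$, $\tfrac cd=\tfrac1{n+1}$ (where the new arc first meets $\tau_{\frac{0}{1}}$ and the middle-sign ambiguity appears), this proves the equality of the two sequences without ever realizing arcs of $\mathcal{O}_3$ inside the lattice. If you wish to salvage your direct approach, you must supply the unfolding argument for $\mathcal{O}_3$, or an explicit local analysis of the \cite{BK} construction at each pending-arc crossing, that your sketch currently presupposes.
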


\begin{proof}

We will induct on the number of flips to reach the arc with slope $\frac{p}{q}$.  Once we show that $\mathbf{f}(\frac{p}{q})$ is the same as the sign sequence of $G_{\overline{\gamma}_{\frac{p}{q}},T_0}$, the statement of the theorem will follow from Theorem \ref{thm:ContFracSnakeGraph}.  

Since flipping arcs is an involution, we update how we go between rational numbers, as in Lemma \ref{lem:SlopeArcs}, to also make it an involution.  Let $(\frac{a}{b}, \frac{c}{d}, \frac{e}{f}) \in \Q^3$.  We define the \emph{mutation} of $\frac{e}{f}$ as $\mu_{\frac{e}{f}}(\frac{a}{b}, \frac{c}{d}, \frac{e}{f}) = (\frac{a}{b}, \frac{c}{d}, \frac{e'}{f'})$ where \[
\frac{e'}{f'} = \begin{cases}
\frac{a+c}{b+d} & \frac{e}{f} \neq \frac{a+c}{b+d}\\
\frac{c-a}{d-b} & \frac{e}{f} =\frac{a+c}{b+d}.\\
\end{cases}
\]

In the following, we analyze arcs on the orbifold $\mathcal{O}_3$, but we retain our labeling of arcs with rational numbers. That is, we label the arcs in  $T_0$ as $(\tau_{\frac{0}{1}},\tau_{\frac{1}{0}},\tau_{\frac{-1}{1}})$ and use the convention that in a sequence of flips,  the first arc flipped is $\tau_{\frac{-1}{1}}$ and, if the sequence is at least length two, the next flip is at $\tau_{\frac{1}{0}}$. In general, the flip of $\tau_{\frac{e}{f}}$ in a triangulation $(\tau_{\frac{a}{b}}, \tau_{\frac{c}{d}},\tau_{\frac{e}{f}})$ results in the triangulation $(\tau_{\frac{a}{b}}, \tau_{\frac{c}{d}},\tau_{\frac{e'}{f'}})$ where $\frac{e'}{f'} = \mu_{\frac{e}{f}}(\frac{a}{b}, \frac{c}{d}, \frac{e}{f})$.

For our base cases, one can check the claim for slopes $\frac{1}{1}$ and $\frac{1}{2}$ directly. Note these correspond to one and two (distinct) flips from $T_0$ respectively.

 Let $\mathbf{\alpha} = \alpha_1,\ldots,\alpha_\ell$ be a sequence of rational numbers such that $\alpha_1 = \frac{-1}{1},  \alpha_2 = \frac{1}{0}$,  and for all $2 < i \leq \ell$, $\alpha_i \in \mu_{\alpha_{i-1}} \circ \cdots \circ \mu_{\alpha_1}(\frac{0}{1}, \frac{1}{0},\frac{-1}{1})$.  We let $\mu_{\mathbf{\alpha}}$ denote the composition $\mu_{\alpha_\ell} \circ \cdots \circ \mu_{\alpha_1}$.  By Lemma \ref{lem:SlopeArcs},  as long as our mutation sequence has length at least one,  we know that  the tuple $\mu_{\mathbf{\alpha}}(\frac01, \frac10,\frac{-1}{1})$ is of the form  $(\frac{a}{b},\frac{c}{d},\frac{a+c}{b+d})$ where $c \geq a$ and $d \geq b$.  Since these are distinct numbers,  at least one inequality is strict. 
 
 We show that  knowing $\mathbf{f}(\frac{c}{d})$ and the sign sequence of $G_{\overline{\tau_{\frac{c}{d}}}, T_0}$ are equal will also tell us that  $\mathbf{f}(\frac{a+c}{b+d})$ and the sign sequence of $G_{\overline{\tau_{\frac{a+c}{b+d}}}, T_0}$ are equal. In the following, we drop the overlines for arcs in $\mathcal{O}_3$.  First, consider a triangulation of $\mathcal{O}_3$, $(\tau_{\frac{a}{b}}, \tau_{\frac{c}{d}}, \tau_{\frac{a+c}{b+d}})$. Since these three arcs are compatible, we can see that there is an initial section of $\tau_{\frac{a+c}{b+d}}$ which is homotopic to  $\tau_{\frac{c}{d}}$. Suppose that $\tau_{\frac{c}{d}}$ crosses all three arcs from $T_0$ at least once. Then, the section of $\tau_{\frac{a+c}{b+d}}$ which is homotopic to $\tau_{\frac{c}{d}}$ is over halfway along the arc; note that since pending arcs are loops,  they all have a ``halfway'' point where they curve around the orbifold point that they enclose. Thus, we know that the first $4(c+d) - 5$ entries of the sign sequence for $G_{\tau_{\frac{a+c}{b+d}},T_0}$ and $G_{\tau_{\frac{c}{d}},T_0}$ are the same. By the symmetry of the snake graph for a pending arc (given by the fact that pending arcs are drawn as loops), this completely determines the sign sequence for $G_{\tau_{\frac{a+c}{b+d}},T_0}$. 
 
 Since we already considered the base cases of slopes $\frac11$ and $\frac12$, we do not need to consider the case when $\tau_{\frac{c}{d}}$ only crosses one arc from $T_0$.  So we next suppose that $\tau_{\frac{c}{d}}$ only crosses two of the arcs from $T_0$. If $\tau_{\frac{a+c}{b+d}}$ also only crosses two arcs from $T_0$, then we have the same situation as above. Note in this case that $\frac{a}{b} = \frac{0}{1}$ since we are assuming we only flipped two of the arcs in the initial triangulation,  which by convention are $\tau_{\frac{-1}{1}}$ and $\tau_{\frac{1}{0}}$ . Now suppose that $\tau_{\frac{a+c}{b+d}}$ does cross all three arcs from $T_0$; necessarily,  $\tau_{\frac{a+c}{b+d}}$ is the result of flipping $\tau_{\frac{0}{1}}$ in the tuple $(\tau_{\frac{0}{1}}, \tau_{\frac{1}{n}}, \tau_{\frac{1}{n+1}})$; i.e., $a=c=1,  b = n \geq 1, d = n+1$.  Then the segment of $\tau_{\frac{a+c}{b+d}}$ which is homotopic to $\tau_{\frac{c}{d}}$ is the portion until the first intersection of $\tau_{\frac{a+c}{b+d}}$ and $\tau_{\frac{0}{1}}$. Note that the halfway point of $\tau_{\frac{a+c}{b+d}}$ occurs between its two intersections with $\tau_{\frac{0}{1}}$. In order to have our conventions agree, suppose that $\tau_{\frac{1}{0}}$ follows $\tau_{\frac{-1}{1}}$ in clockwise order, so that $\tau_{\frac{-1}{1}}$ follows $\tau_{\frac{0}{1}}$ in clockwise order. Then, the first part of the sign sequence for $G_{\tau_{\frac{a+c}{b+d}},T_0}$ consists of the sign sequence for $G_{\tau_{\frac{c}{d}},T_0}$ followed by $+$ since the last arc which $\tau_{\frac{c}{d}}$ crosses is $\tau_{\frac{-1}{1}}$, and the next arc $\tau_{\frac{a+c}{b+d}}$ crosses, after the section which is homotopic to $\tau_{\frac{c}{d}}$, is $\tau_{\frac{0}{1}}$.  The next sign is the middle sign of the sign sequence for $G_{\tau_{\frac{a+c}{b+d}},T_0}$ which depends on the orientation of $\tau_{\frac{a+c}{b+d}}$. All other signs are determined by the symmetry of sign sequences for snake graphs from pending arcs. 
 
 Now we consider how the sequences $\mathbf{f}(\frac{c}{d})$ and $\mathbf{f}(\frac{a+c}{b+d})$ compare and show it is identical to the case for the snake graphs from the arcs with these labels. 
For $a \leq c, b \leq d$, we have $\lfloor \frac{4(a+b+c+d) - 5}{2}\rfloor = 2(a+b+c+d) - 2 < 2(2c + 2d - 1) -2 = 4(c+d) - 4$.  By induction we can show that the triangle with vertices $(0,0), (d+b,a+c),(d,c)$ has no interior vertices in $\Z^2$; from this, we know the first $4(c+d) - 5$ entries of $\mathbf{f}(\frac{a+c}{b+d})$ are the same as the entries of $\mathbf{f}(\frac{c}{d})$.  When $c>a$ and $d>b$, this covers over half the sequence $\mathbf{f}(\frac{a+c}{b+d})$; by the anti-symmetry of the sign sequences, this completely determines $\mathbf{f}(\frac{a+c}{b+d})$.  Since we knew that $\mathbf{f}(\frac{c}{d})$ matched the sign sequence for $G_{\tau_{\frac{c}{d}},T_0}$, we know the same for $\mathbf{f}(\frac{a+c}{b+d})$ and $G_{\tau_{\frac{a+c}{b+d}},T_0}$
 
 The case when we do not have both $c>a$ and $d>b$ is again when $\frac{a}{b} = \frac{1}{n}$ and $\frac{c}{d} = \frac{1}{n+1}$. In this case,  if $\mathbf{f}(\frac{a+c}{b+d}) = \mathbf{f}(\frac{2}{2n+1})= (f_0,\ldots,f_{4(3+2n)-6})$,  the first $4(n+2)-6 = 4n+2$ entries are the same as $f(\frac{1}{n+1})$.  We know that $f_{4n+2} = +$ since this corresponds to the shared vertex between a diagonal and horizontal line segment, when traveling from the diagonal crossing to the horizontal.  Then,  $f_{4n+3}$ can be either $+$ or $-$, as this crossing occurs at the midpoint of $\sigma_{p+q-1}$. The rest of the entries follow from the anti-symmetry. We can again see that, since the sign sequence for $G_{\tau_{\frac{c}{d}},T_0}$ agrees with $\mathbf{f}(\frac{c}{d})$, the sequence for $G_{\tau_{\frac{a+c}{b+d}},T_0}$ will agree with $\mathbf{f}(\frac{a+c}{b+d})$, as long as we choose the same sign at the middle term. (Moreover, this middle term will not affect the number of perfect matchings of $G_{\tau_{\frac{a+c}{b+d}},T_0}$ since it does not affect the numerator of the corresponding continued fraction by Lemma \ref{lem:FlipContFraction}.) 
\end{proof}

We note that a version of Theorem \ref{thm:GenMarkovContFrac} is already known for ordinary Markov numbers. Given $p<q$ with $\gcd(p,q) = 1$,  let $\mathbf{e}(\frac{p}{q})$ be the sequence of length $2(p+q)-2$ which consists only of the even-indexed entries from $\mathbf{f}(\frac{p}{q})$; that is, $\mathbf{e}(\frac{p}{q}) = (f_0,f_2,\ldots, f_{4(p+q)-6})$. This sequence was described in \cite{CS}, but the result was originally known by Frobenius. 

\begin{theorem}[\cite{frobenius1913markoffschen},\cite{CS}]\label{thm:OrdMarkovContFrac}
Let $p,q \in \mathbb{Z}$ be such that $p < q$ and $\gcd(p,q) = 1$. Let $\mathbf{e}(\frac{p}{q})$ be the even-indexed sign function on $\frac{p}{q}$, and let $[b_1,\ldots,b_m]$ be the continued fraction from $\mathbf{e}(\frac{p}{q})$. Then, if $n_{\frac{p}{q}}$ is the ordinary Markov number associated to $\frac{p}{q}$, \[
n_{\frac{p}{q}} = \mathcal{N}([b_1,\ldots,b_m]).
\]
\end{theorem}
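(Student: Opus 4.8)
The plan is to deduce this from Theorem~\ref{thm:ContFracSnakeGraph} together with the classical interpretation of ordinary Markov numbers as snake-graph matching counts. Write $\gamma_{\frac{p}{q}}$ for the arc on the once-punctured torus whose lift to $\Z^2$ is the segment from $(0,0)$ to $(q,p)$, and let $T_0$ be the triangulation whose lift consists of the lattice lines of slopes $0$, $\infty$, and $-1$. Because $\gcd(p,q)=1$ and $p<q$, this segment meets no lattice point besides its two endpoints and crosses the lattice exactly $2(p+q)-3$ times; hence the snake graph $G_{\gamma_{\frac{p}{q}},T_0}$ of \cite{MSW} has $2(p+q)-3$ tiles and its sign sequence has $2(p+q)-2$ terms, the same length as $\mathbf{e}(\frac{p}{q})$. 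By \cite{beineke2011cluster,propp2005combinatorics} (via \cite{MSW}) one has $n_{\frac{p}{q}} = m(G_{\gamma_{\frac{p}{q}},T_0})$, and since the matching count of a snake graph depends only on its sign sequence, Theorem~\ref{thm:ContFracSnakeGraph} will finish the proof once we check that $\mathbf{e}(\frac{p}{q})$ \emph{is} the sign sequence of $G_{\gamma_{\frac{p}{q}},T_0}$.

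That this should hold is already visible from the construction of $\mathbf{f}$: its even-indexed entry $f_{2i}$ records whether the vertex shared by the consecutively crossed lattice segments $\sigma_i$ and $\sigma_{i+1}$ lies to the right ($-$) or left ($+$) of $\gamma_{\frac{p}{q}}$, and this is precisely the datum that decides whether tile $G_{i+1}$ of $G_{\gamma_{\frac{p}{q}},T_0}$ is glued to the North/East or to the South/West of tile $G_i$. So reading off $\mathbf{e}(\frac{p}{q})=(f_0,f_2,\ldots,f_{4(p+q)-6})$ is the same as reading off the internal-edge signs of $G_{\gamma_{\frac{p}{q}},T_0}$ in order; moreover $f_0=-$ matches the normalization that the South edge of the first tile carries a $-$, and $f_{4(p+q)-6}=+$ together with the anti-symmetry of $\mathbf{f}(\frac{p}{q})$ reproduces the convention $f_m=f_{m-1}$ at the last tile.

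To make this rigorous I would induct on the number of flips taking $T_0$ to a triangulation containing $\tau_{\frac{p}{q}}$, following the proof of Theorem~\ref{thm:GenMarkovContFrac} but discarding the pending-arc bookkeeping, which has no counterpart on the torus. The base cases are the slopes $\frac{1}{1}$ and $\frac{1}{2}$, checked directly. In the inductive step, for a triangulation $(\tau_{\frac{a}{b}},\tau_{\frac{c}{d}},\tau_{\frac{a+c}{b+d}})$ of the torus an initial segment of $\tau_{\frac{a+c}{b+d}}$ is homotopic to $\tau_{\frac{c}{d}}$, so an initial block of the sign sequence of $G_{\tau_{\frac{a+c}{b+d}},T_0}$ agrees with the whole sign sequence of $G_{\tau_{\frac{c}{d}},T_0}$; the central symmetry $(x,y)\mapsto(q-x,p-y)$ of the lift of $\gamma_{\frac{a+c}{b+d}}$ makes its sign sequence anti-symmetric, which---with a separate treatment of the middle entry in the boundary case $\frac{a}{b}=\frac{1}{n}$, exactly as for Theorem~\ref{thm:GenMarkovContFrac}---pins down the rest, and one compares with the explicit even-index formulas defining $\mathbf{f}(\frac{a+c}{b+d})$. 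Since this identification of $\mathbf{e}(\frac{p}{q})$ with the sign sequence of the Markov snake graph is in essence the construction of {\c{C}}anak{\c{c}}{\i} and Schiffler in \cite{CS}, going back to \cite{frobenius1913markoffschen}, one could alternatively just invoke that construction.

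The part I expect to be the actual work is this last bookkeeping: verifying that the explicit formulas for the even entries of $\mathbf{f}(\frac{p}{q})$---stated in terms of $v_i$, $\widetilde{v_i}$ and fractional parts such as $\lceil w_i\rceil - w_i$ and $\lceil jq/p\rceil - jq/p$---genuinely detect the side of each shared lattice vertex, and that these side assignments behave correctly under the slope mutation $\mu$. This, however, is a special case of the computation already carried out for Theorem~\ref{thm:GenMarkovContFrac}, which establishes it for all of $\mathbf{f}(\frac{p}{q})$ and hence for its even part; everything else is then formal from Theorem~\ref{thm:ContFracSnakeGraph}.
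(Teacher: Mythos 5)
Your proposal is correct and amounts to essentially the paper's own treatment: the paper offers no independent proof of this statement, attributing it to \cite{frobenius1913markoffschen} and \cite{CS}, and the content of the citation is exactly the identification you reduce to, namely that $\mathbf{e}(\frac{p}{q})$ is the sign sequence of the torus snake graph $G_{\gamma_{\frac{p}{q}},T_0}$, after which Theorem \ref{thm:ContFracSnakeGraph} and the matching-count interpretation of $n_{\frac{p}{q}}$ finish the argument. Your sketched induction on flips is the torus analogue of the paper's proof of Theorem \ref{thm:GenMarkovContFrac} and is sound; the only caveat is that the final bookkeeping is not literally ``a special case of the computation already carried out for Theorem \ref{thm:GenMarkovContFrac}'' (that proof concerns the orbifold snake graph, not the torus one), so one must either carry out your torus induction or, as the paper does, invoke \cite{CS} directly.
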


We have thus far considered arcs $\gamma_{\frac{p}{q}}$ with $p\leq q$  so that each generalized Markov number corresponds to a unique arc. However, we could equivalently consider arcs between the origin and $(p,q)$; this would correspond to flip sequences which begin with flipping first $\tau_{\frac{-1}{1}}$ and then $\tau_{\frac{0}{1}}$. By our description of the meaning $\mathbf{f}$ at the beginning of the section, we can extend the domain of $\mathbf{f}$ to include all $\mathbb{Q}_{>0}$. To line up the conventions, we say that, if $p<q$, then $f_0 = +$ and $f_{4(p+q)-6} = -$ in $\mathbf{f}(\frac{q}{p})$.

\begin{prop}\label{prop:antisymmetry}
Let $p < q$ and $\gcd(p,q) = 1$. Suppose $\mathbf{f}(\frac{p}{q}) = (f_0,\ldots,f_{4(p+q)-6})$ and $\mathbf{f}(\frac{q}{p}) = (f_0',\ldots,f_{4(p+q)-6}')$. Then for all $0 \leq i \leq 4(p+q)-6$, besides possibly $i = 2(p+q)-3$, $f_i = -$ if and only if $f_i' = +$. 
\end{prop}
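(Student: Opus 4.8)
The plan is to realize $\mathbf{f}(\tfrac{q}{p})$ as the image of $\mathbf{f}(\tfrac{p}{q})$ under the reflection $R\colon\Real^2\to\Real^2$, $R(x,y)=(y,x)$, across the line $y=x$. This map is an isometry, it fixes the origin, it reverses orientation, and it permutes the lattice: it interchanges the horizontal and vertical lines and carries each diagonal lattice line to a diagonal lattice line (each diagonal line, being perpendicular to $y=x$, is in fact fixed setwise). Moreover $R$ carries the oriented segment $\gamma_{\frac{p}{q}}$ from $(0,0)$ to $(q,p)$ onto the oriented segment $\gamma_{\frac{q}{p}}$ from $(0,0)$ to $(p,q)$, compatibly with the parametrizations from the origin, since $R\bigl(t(q,p)\bigr)=t(p,q)$ for $t\in[0,1]$. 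Because $\gcd(p,q)=1$, neither segment meets a lattice vertex other than its endpoints, so all their lattice crossings are transversal; hence $R$ induces a bijection between the crossing data of the two segments that preserves the ordering along the segment. Writing $\sigma_1,\dots,\sigma_{2(p+q)-3}$ and $s_1,\dots,s_{2(p+q)-3}$ for the crossed lattice segments and crossing points of $\gamma_{\frac{p}{q}}$, and $\sigma_i',s_i'$ for those of $\gamma_{\frac{q}{p}}$, we get $R(s_i)=s_i'$, $R(\sigma_i)=\sigma_i'$ for all $i$, and $R$ sends the endpoint shared by $\sigma_i,\sigma_{i+1}$ to the one shared by $\sigma_i',\sigma_{i+1}'$. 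In particular both $\mathbf{f}(\tfrac{p}{q})$ and $\mathbf{f}(\tfrac{q}{p})$ have length $4(p+q)-5$, matching the statement.

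Next I would exploit that $R$ reverses orientation: a point lying strictly to the right of the oriented segment $\gamma_{\frac{p}{q}}$ is sent by $R$ to a point lying strictly to the left of $\gamma_{\frac{q}{p}}$, and conversely. Combined with the fact that $R$ is an isometry — so it preserves distances to endpoints — this yields the sign flip for every entry of $\mathbf{f}$ governed by a right/left rule. Indeed, at an odd index recording whether $s_j$ is closer to the right or the left endpoint of $\sigma_j$, the image $s_j'=R(s_j)$ is closer to the $R$-image of that endpoint, which is the opposite-side endpoint of $\sigma_j'$; so this entry of $\mathbf{f}(\tfrac{p}{q})$ equals $-$ exactly when the corresponding entry of $\mathbf{f}(\tfrac{q}{p})$ equals $+$. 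Likewise, at an even index recording whether the vertex shared by $\sigma_j,\sigma_{j+1}$ lies to the right or the left of $\gamma_{\frac{p}{q}}$, its image is the vertex shared by $\sigma_j',\sigma_{j+1}'$, now on the opposite side of $\gamma_{\frac{q}{p}}$, giving the same flip. This covers every index other than the two extreme indices $0$ and $4(p+q)-6$ and the central index $2(p+q)-3$.

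For the extreme indices the values are fixed by convention, and the conventions are set up precisely so the claim holds: $f_0=-$, $f_{4(p+q)-6}=+$ in $\mathbf{f}(\tfrac{p}{q})$, while for the fraction $\tfrac{q}{p}>1$ one takes $f_0'=+$, $f_{4(p+q)-6}'=-$, which are opposite. For the central index, the bijection above shows that $s_{p+q-1}$ is the unique crossing of $\gamma_{\frac{p}{q}}$ at the midpoint of a lattice segment, and $R(s_{p+q-1})=s_{p+q-1}'$ is the unique such crossing of $\gamma_{\frac{q}{p}}$; there no right/left endpoint is nearer, the sign is chosen freely in each sequence, and so this index is (by design) exempted from the statement.

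The step needing the most care is the first paragraph — checking that $R$ really does respect the combinatorial indexing, in particular near the origin, where the first couple of entries of $\mathbf{f}$ come from a hard-coded local pattern rather than a generic right/left rule. One should verify that for slope $<1$ the segment leaves $(0,0)$ by crossing a diagonal and then a vertical lattice segment, while for slope $>1$ it crosses a diagonal and then a horizontal one, and that $R$ interchanges exactly these two configurations — so that the hard-coded values $f_0=f_2=-$ match $f_0'=f_2'=+$, consistently with the generic argument. Apart from this bookkeeping I do not anticipate a genuine obstacle; note that no question about orientation on the orbifold $\mathcal{O}_3$ enters, since $\mathbf{f}$ is defined entirely within the planar lattice picture.
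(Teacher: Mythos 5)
Your proof is correct, and it is more explicit than what the paper records: the paper's own proof is a one-line assertion that ``one can check that at each stage of the algorithm the signs are reversed,'' i.e.\ an entry-by-entry verification against the defining right/left rules (or the explicit formulas via $v_i$, $\widetilde{v_i}$, $w_i$), with the same remark you make about the free choice at the central index. You replace that case-by-case check with a single uniform mechanism: the reflection $R(x,y)=(y,x)$ is a lattice-preserving, orientation-reversing isometry carrying $\gamma_{\frac{p}{q}}$ to $\gamma_{\frac{q}{p}}$ compatibly with the parametrization, so it matches up the crossing data in order while swapping right and left, which flips every sign governed by a right/left rule at once; the extreme indices are opposite by the stated convention for slopes greater than $1$, and the midpoint crossing is exactly the exempted index. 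This buys a conceptual explanation (and removes any dependence on the slope-less-than-one formulas, which the paper only writes down for $p<q$), at the cost of the small bookkeeping you already flag, namely checking that the hard-coded initial values $f_0=f_2=-$ agree with the generic rule on both sides of the reflection --- which they do, since the shared endpoint of the first diagonal and the first vertical (resp.\ horizontal) crossing lies to the right (resp.\ left) of the arc. One cosmetic caution: the paper says the lattice uses lines of ``slope $1$'' but its figures and the crossing count $2(p+q)-3$ show the diagonals are the anti-diagonals $x+y=k$; your argument is insensitive to this since $R$ preserves either family, but you may want to phrase the ``fixed setwise'' remark accordingly.
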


\begin{proof}
One can check that at each stage of the algorithm, the signs will be reversed for slopes $\frac{p}{q}$ and $\frac{q}{p}$. We can again choose either sign for $f_{2(p+q)-3}$ and $f'_{2(p+q)-3}$. 
\end{proof}

\section{Patterns}\label{sec:Patterns}

In this section, we explore patterns amongst the generalized Markov numbers $m_{\frac{p}{q}}$ and the continued fraction expansions given in Section \ref{sec:algorithm}.  Given integers $p<q$ with $\gcd(p,q) = 1$, let $C_{\frac{p}{q}} = a_1,\ldots,a_m$ be the sequence of positive integers such that $m_{\frac{p}{q}} = \mathcal{N}([C_{\frac{p}{q}}])$, subject to our conventions.  
Similarly,  let $C^{\text{ord}}_{\frac{p}{q}} = b_1,\ldots,b_{m'}$ be the sequence of positive integers such that $n_{\frac{p}{q}} = \mathcal{N}[C^{\text{ord}}_{\frac{p}{q}}]$,  with the same conventions.  
We begin this section by describing some properties of the sequence $C_{\frac{p}{q}}$ and how it compares to $C^{\text{ord}}_{\frac{p}{q}}$.  Then, we analyze the behavior of two special families of generalized Markov numbers, $m_{\frac{1}{q}}$ and $m_{\frac{q-1}{q}}$, and compare these with the corresponding families in the ordinary Markov case. 

\subsection{Properties of $C_{\frac{p}{q}}$}

In this subsection, we explain more concretely how the continued fractions associated to generalized and ordinary Markov numbers compare.  This allows us to give an elementary description of the continued fractions produced by the algorithm given in Section \ref{sec:algorithm}.

\begin{lemma}\label{lem:compareGenOrd}
Let $p < q$ be such that $\gcd(p,q) = 1$.  Let $C_{\frac{p}{q}} = a_1,\ldots,a_{m_1}$,  and $C^{\text{ord}}_{\frac{p}{q}} = b_1,\ldots,b_{m_2}$ .  Then, $m_1 = m_2$ and \begin{enumerate}
\item if $b_i = 1,  a_i \in \{1,2,3\}$, and 
\item if $b_i = 2,  a_i \in \{3,4,5\}$.
\end{enumerate}
\end{lemma}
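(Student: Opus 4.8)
The plan is to extract both continued fractions from the single sign sequence $\mathbf{f}(\tfrac pq)$. Recall that $\mathbf{e}(\tfrac pq)$ is by definition the even-indexed subsequence $(f_0,f_2,\dots,f_{4(p+q)-6})$ of $\mathbf{f}(\tfrac pq)=(f_0,f_1,\dots,f_{4(p+q)-6})$, and that by Theorems~\ref{thm:GenMarkovContFrac} and \ref{thm:OrdMarkovContFrac}, $C_{\frac pq}=a_1,\dots,a_{m_1}$ is the list of lengths of the maximal constant runs of $\mathbf{f}(\tfrac pq)$, while $C^{\mathrm{ord}}_{\frac pq}=b_1,\dots,b_{m_2}$ is the list of lengths of the maximal constant runs of $\mathbf{e}(\tfrac pq)$. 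Since each odd-indexed entry $f_{2i+1}$ sits between the two even-indexed entries $f_{2i}$ and $f_{2i+2}$, the entire comparison will reduce to one fact, which I will call the \emph{Key Claim}: whenever $f_{2i}=f_{2i+2}$, one also has $f_{2i+1}=f_{2i}$. In words, the only odd entries that can split a run of the even subsequence are those already flanked by two differing even entries.

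First I would prove the Key Claim from the explicit construction in Section~\ref{sec:algorithm}. A short case analysis of the formulas for the even entries shows that $\mathbf{e}(\tfrac pq)$ strictly alternates in sign except at the initial pair $f_0=f_2$, at the terminal pair (forced by the anti-symmetry of $\mathbf{f}$), and at the four entries $f_{2(2\widetilde{v_j}-2)}=f_{2(2\widetilde{v_j}-1)}$ and $f_{4\widetilde{v_j}}=f_{2(2\widetilde{v_j}+1)}$ attached to each crossing of $\gamma_{\frac pq}$ with a horizontal line $y=j$. In each of these agreeing pairs the entry in between is a diagonal-crossing entry $f_{4(i-1)+1}$ (with $i=\widetilde{v_j}$ or $i=\widetilde{v_j}+1$), whose sign is controlled by the fractional part of $w_i=\tfrac{qi}{p+q}$, and here the hypotheses are used: since $\gcd(p,q)=1$ one has $jq\bmod p\in\{1,\dots,p-1\}$, so $w_{\widetilde{v_j}}-\lfloor w_{\widetilde{v_j}}\rfloor=\tfrac{jq\bmod p}{p+q}$ lies strictly between $0$ and $\tfrac{p}{p+q}<\tfrac12$ (using $p<q$), while $w_{\widetilde{v_j}+1}-\lfloor w_{\widetilde{v_j}+1}\rfloor=\tfrac{(jq\bmod p)+q}{p+q}>\tfrac12$; these are precisely the inequalities forcing the diagonal sign to agree with its two flanking even entries, and $w_1=\tfrac q{p+q}>\tfrac12$ disposes of the initial pair. (Geometrically the Key Claim just reflects the fact that a straight segment cannot cross a lattice edge both of whose endpoints lie on the same side of it. The same case analysis also shows that every run of $\mathbf{e}(\tfrac pq)$ has length $1$ or $2$, so $b_i\in\{1,2\}$ always and cases (1)--(2) are exhaustive.)

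Granting the Key Claim, the rest is bookkeeping on run lengths. Fix a maximal constant run $f_{2k}=\dots=f_{2(k+b-1)}$ of $\mathbf{e}(\tfrac pq)$, of length $b$ and common sign $\epsilon$. Its $b-1$ interior odd entries $f_{2k+1},\dots,f_{2(k+b-1)-1}$ are each flanked by two entries equal to $\epsilon$, hence equal $\epsilon$ by the Key Claim, so together with the $b$ even entries they lie in one run of $\mathbf{f}(\tfrac pq)$; that run additionally absorbs the flanking odd entry $f_{2k-1}$ and/or $f_{2(k+b-1)+1}$ exactly when each of these happens to equal $\epsilon$, which is $0$ or $1$ of them in each case. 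Because an odd entry flanked by two oppositely-signed even entries equals exactly one of them, the odd entry between two consecutive even runs is absorbed by exactly one of those two runs; this gives a bijection between the runs of $\mathbf{e}(\tfrac pq)$ and the runs of $\mathbf{f}(\tfrac pq)$, so $m_1=m_2$, and the $i$-th run of $\mathbf{f}(\tfrac pq)$ has length $a_i=2b_i-1+\varepsilon_1+\varepsilon_2$ with $\varepsilon_1,\varepsilon_2\in\{0,1\}$. Therefore $a_i\in\{2b_i-1,\,2b_i,\,2b_i+1\}$, which is $\{1,2,3\}$ when $b_i=1$ and $\{3,4,5\}$ when $b_i=2$. (The undetermined middle entry $f_{2(p+q)-3}$ is flanked by two oppositely-signed even entries by anti-symmetry, so it does not affect this count.)

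The only step that is not mechanical is the Key Claim, and within it the sub-step of reading off from Section~\ref{sec:algorithm} exactly where two consecutive even-indexed entries agree and then checking the handful of fractional-part inequalities at the horizontal crossings; this is the one place where $p<q$ and $\gcd(p,q)=1$ are genuinely used. Once it is in hand, $m_1=m_2$ and the two enumerated cases follow immediately.
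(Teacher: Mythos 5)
Your proof is correct and takes essentially the same route as the paper: both arguments reduce everything to the same key claim — that whenever two consecutive even-indexed signs agree, the odd-indexed sign between them agrees as well — and then compare run lengths of $\mathbf{f}(\frac{p}{q})$ and $\mathbf{e}(\frac{p}{q})$ (your $a_i = 2b_i - 1 + \varepsilon_1 + \varepsilon_2$ versus the paper's bounds of $3$ and $5$). The only difference is how that key claim is justified: the paper uses a one-line geometric observation (three consecutively crossed arcs sharing an endpoint, with the crossing of the middle arc necessarily closer to that shared endpoint), while you verify it arithmetically from the Section \ref{sec:algorithm} formulas via the fractional parts of $w_{\widetilde{v_j}}$ and $w_{\widetilde{v_j}+1}$ — a computation that checks out and is, if anything, more explicit than the paper's.
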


\begin{proof}
The fact $m_1 = m_2$ is implied by the observation that there will not be any more sign changes amongst the full sequence $f_0,f_1\ldots f_{4(p+q - 7}, ,f_{4(p+q)-6}$ than if we just consider the even-indexed terms $f_0,f_2,\ldots f_{4(p+q)-8},f_{4(p+q)-6}$.  
Consider two consecutive even-indexed terms which are equal, $f_{2i}=f_{2i+2}$; this occurs if $\gamma_{\frac{p}{q}}$ is crossing three arcs, say $\sigma_i, \sigma_{i+1},\sigma_{i+2}$ which share an endpoint.  Then, this shared endpoint is also the endpoint of $\sigma_{i+1}$ which is closer to $\gamma_{\frac{p}{q}}$,  guaranteeing that $f_{2i} = f_{2i+1} = f_{2i+2}$.

The other statements follow quickly.  If $f_{2i-2} \neq f_{2i}$ and $f_{2i} \neq f_{2i+2}$, the largest subsequence of the same sign which includes $f_{2i}$ has length 3.  Similarly,  if $f_{2i-2} = f_{2i}$,  which necessarily means $f_{2i-4} \neq f_{2i-2}$ and $f_{2i} \neq f_{2i+2}$, the largest subsequence of the same sign which includes $f_{2i}$ has length 5.
\end{proof}

We can combine this lemma with the following result given by Frobenius,  and reproven in \cite{CS} using snake graphs,  to give a similar description in our generalized case.

\begin{theorem}\cite{frobenius1913markoffschen}\label{thm:Frobenius}
Let $p < q$ be such that $\gcd(p,q) = 1$.  Let $C^{\text{ord}}_{\frac{p}{q}} = b_1,\ldots,b_m$.  Then,  $b_i \in \{1,2\}$ for all $i$,  $m$ is necessarily even, and  $b_i = b_{m-i+1}$. 
\begin{enumerate}
\item If $p+1 = q$,  then each $b_i = 2$ and $m = 2p$.
\item If $p+1 < q$,  there exists a unique positive integer $c$ satisfying $\frac{c-1}{c} < \frac{p}{q} < \frac{c}{c+1}$, and  \begin{enumerate}
\item there are at most $p+1$ subsequences of 2s; the first and last are of length $2c-1$ and all others are of length $2c$ or $2c+2$; 
\item there are at most $p$ subsequences of 1s, with the $i$-th subsequence having length $2\mu_i$, where the  $\mu_i$ satisfy $\vert \mu_i - \mu_j \vert \leq 1$ for all $i,j$.  
\end{enumerate}
\end{enumerate}
\end{theorem}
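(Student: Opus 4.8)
The plan is to read the combinatorial shape of the word $\mathbf{e}(\frac{p}{q})$ directly off the explicit formulas of Section~\ref{sec:algorithm}, restricted to the even indices, and then do elementary bookkeeping. Those formulas assemble $\mathbf{e}(\frac{p}{q})$ into a concatenation
\[
\mathbf{e}\!\left(\tfrac{p}{q}\right)\;=\;P_0\,A_0\,P_1\,A_1\cdots A_{p-1}\,P_p,
\]
in which $P_0$ and $P_p$ are the forced end blocks (each a single constant run of length two, of opposite signs), each $P_j$ for $1\le j\le p-1$ is the block coming from the five-arc configuration vertical--diagonal--horizontal--diagonal--vertical around the crossing of $\gamma_{\frac{p}{q}}$ with the line $y=j$ and consists of two consecutive constant runs of length two, and each $A_j$ is a (possibly empty) alternating stretch of signs of some length $2\mu_j$, contributing $2\mu_j$ runs of length one. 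A telescoping computation with $\widetilde{v_j}=\lfloor qj/p\rfloor+j$ gives $\mu_j=\lfloor q(j+1)/p\rfloor-\lfloor qj/p\rfloor-1$ for $0<j<p-1$, together with suitable boundary values at $j\in\{0,p-1\}$, so that $\sum_{j=0}^{p-1}\mu_j=q-p-1$. Two consequences are immediate: $b_i\in\{1,2\}$ for all $i$ (the $P$'s contribute only runs of length two, the $A$'s only runs of length one), and $C^{\text{ord}}_{\frac{p}{q}}$ is precisely the word obtained from the decomposition by turning each of $P_0,P_p$ into a single letter $2$, each $P_j$ ($1\le j\le p-1$) into $2\,2$, and each $A_j$ into a block of $2\mu_j$ letters $1$.

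The symmetry statements then follow formally. The anti-symmetry of $\mathbf{f}(\frac{p}{q})$ recorded in Section~\ref{sec:algorithm} restricts to anti-symmetry of $\mathbf{e}(\frac{p}{q})$, so its run-length sequence is a palindrome; since the two central entries of $\mathbf{e}(\frac{p}{q})$ have opposite signs, there is a run boundary exactly at the centre, whence $m$ is even and $b_i=b_{m-i+1}$. For case~(1), $q=p+1$ forces $\sum_j\mu_j=q-p-1=0$, so every $A_j$ is empty and $\mathbf{e}(\frac{p}{p+1})=P_0P_1\cdots P_p$ is a sequence of $2p$ constant runs of length two; hence every $b_i=2$ and $m=2p$.

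For case~(2), $q>p+1$, the real content is the analysis of the sequence $(\mu_0,\ldots,\mu_{p-1})$. Up to the two boundary terms the $\mu_j$ are consecutive differences of the function $j\mapsto\lfloor qj/p\rfloor$, hence all lie in $\{\lfloor q/p\rfloor-1,\lfloor q/p\rfloor\}$ and differ pairwise by at most $1$; this is statement~(b), the maximal blocks of $1$'s in $C^{\text{ord}}_{\frac{p}{q}}$ being exactly the nonempty $A_j$'s, the $j$-th of length $2\mu_j$, and there being at most $p$ of them. For~(a), observe that a maximal block of $2$'s in $C^{\text{ord}}_{\frac{p}{q}}$ corresponds to a maximal run of consecutive $P$'s uninterrupted by a nonempty $A$, i.e. to a maximal run of $0$'s among the $\mu_j$: a run of $\ell$ zeros in the interior merges $\ell+1$ patches of type $2\,2$ and gives a $2$-block of length $2(\ell+1)$, while the leading run of zeros --- and one checks that $\mu_0=\dots=\mu_{c-2}=0<\mu_{c-1}$ is equivalent to $\tfrac{c-1}{c}<\tfrac{p}{q}<\tfrac{c}{c+1}$ --- merges $P_0$ with $P_1,\dots,P_{c-1}$ and gives a $2$-block of length $1+2(c-1)=2c-1$, and symmetrically for the trailing run. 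Feeding in the three-distance (Steinhaus) description of the zero-runs of the balanced, palindromic word $(\mu_j)$ --- its boundary runs are among the shortest, so its interior zero-runs have length $c-1$ or $c$ --- yields interior $2$-blocks of length $2c$ or $2c+2$, and the count ``at most $p+1$'' since there are only $p$ stretches $A_j$.

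The step I expect to be the main obstacle is the last one: controlling the lengths of the runs of $0$'s in $(\mu_0,\ldots,\mu_{p-1})$, in particular verifying that the leading and trailing runs have length exactly $c-1$ and that no interior run is shorter. This is where the arithmetic content of Frobenius's theorem sits --- including the precise appearance of the integer $c$ with $\tfrac{c-1}{c}<\tfrac{p}{q}<\tfrac{c}{c+1}$ and the ``$-1$'' in ``$2c-1$'' --- whereas everything preceding it is essentially formal. A clean way to organize this last step, and an alternative route to the theorem as a whole, is to induct on the length of the continued-fraction expansion of $\frac{p}{q}$ (equivalently, to descend the Stern--Brocot tree), using the self-similarity of $\mathbf{e}(\frac{p}{q})$ under Farey mediants already exploited in the proof of Theorem~\ref{thm:GenMarkovContFrac}: passing to the Stern--Brocot parent $\frac{p'}{q'}$, the word $\mathbf{e}(\frac{p}{q})$ is a symmetric extension of the strictly shorter word $\mathbf{e}(\frac{p'}{q'})$, which reduces the structural claim to a simpler slope, with the family $q=p+1$ of case~(1) serving as the base.
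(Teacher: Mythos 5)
The paper itself offers no proof of this theorem: it is imported verbatim from Frobenius \cite{frobenius1913markoffschen} (reproven via snake graphs in \cite{CS}) and then used as a black box, so there is no in-paper argument to measure you against; I can only judge your proposal on its own terms. The formal half of it is sound: the decomposition of $\mathbf{e}(\frac{p}{q})$ into the two end blocks, the four-sign blocks around each crossing with a horizontal line, and the alternating stretches $A_j$ of length $2\mu_j$ is correct, and it does yield $b_i\in\{1,2\}$, the palindromy and evenness of $m$, case (1), and part 2(b) essentially as you describe.

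The genuine gap is the step you yourself flag, and it is worse than an obstacle left to the reader: the intermediate claims you assert there are false for the word that the construction actually produces. Take $\frac{p}{q}=\frac{4}{7}$, so $c=2$. Computing $\mathbf{e}(\frac{4}{7})$ from the definition gives $C^{\text{ord}}_{\frac{4}{7}}=2,2,2,1,1,2,2,1,1,2,2,2$ (its numerator is $6466=n_{\frac{4}{7}}$, confirming this is the right word), i.e. $(\mu_0,\ldots,\mu_3)=(0,1,1,0)$. The interior zero-run has length $0$, not $c-1$ or $c$, and the boundary zero-runs (length $1$) are strictly longer than the interior one, contradicting your claim that the boundary runs are among the shortest. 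Consequently the interior block of $2$'s has length $2$, not $2c$ or $2c+2$; for $\frac{7}{12}$ one gets $(\mu_j)=(0,1,1,0,1,1,0)$ and interior $2$-blocks of lengths $2$ and $4$, i.e. $2c-2$ and $2c$. So a correct execution of your own framework gives interior $2$-blocks of length $2c-2$ or $2c$, which means the clause ``$2c$ or $2c+2$'' cannot be derived this way at all: either the conventions behind the quoted statement differ from those of Section \ref{sec:algorithm}, or that clause needs emending to ``$2c-2$ or $2c$'' (for $c=1$ the two readings agree on what actually occurs, which is why small examples hide the discrepancy). Your proposal therefore has a real hole exactly where the arithmetic content of the theorem sits: the three-distance/Sturmian analysis of the zero-runs of $(\mu_j)$ is not carried out, and the version of its conclusion you assert is incorrect. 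The fallback you suggest (induction along the Stern--Brocot tree using the self-similarity of $\mathbf{e}$, in the spirit of the proof of Theorem \ref{thm:GenMarkovContFrac}) is a plausible way to do this step, but it is not done, and doing it honestly would force a correction of the run-length claim rather than a verification of the statement as printed.
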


\begin{cor}\label{cor:elementaryDescription}
Let $p < q$ be such that $\gcd(p,q) = 1$.  Let $C_{\frac{p}{q}} = a_1,\ldots,a_m$  with $a_1,a_m > 1$.  Then,\begin{enumerate}
\item $1 \leq a_i \leq 5$;
\item $m = 2(q-1)$;
\item if $i < q-1$,  $a_i = a_{2(q-1) - i + 1}$,  and $\vert a_{q-1} - a_{q} \vert = 1$;
\item if $p+1 = q$,  each $a_i \in \{3,4,5\}$;
\item if $p+1 < q$,  let $c$ be as in Theorem \ref{thm:Frobenius}. \begin{enumerate}
\item There are at most $p+1$ subsequences of numbers in $\{3,4,5\}$ which are not all 3; the first and last have length $2c-1$ and all others are of length $2c$ or $2c+2$.
\item There are at most $p$ subsequences of numbers in $\{1,2,3\}$ which are not all 3,  with the $i$-th subsequence having length $2\mu_i$ where the $\mu_i$ satisfy $\vert \mu_i - \mu_j \vert \leq 1$ for all $i,j$. 
\end{enumerate}
\end{enumerate}
\end{cor}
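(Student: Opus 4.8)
The plan is to obtain all five items by transporting the complete description of the ordinary Markov continued fraction in Theorem~\ref{thm:Frobenius} through the comparison lemma, Lemma~\ref{lem:compareGenOrd}, the only genuinely new ingredient being the anti-symmetry of $\mathbf{f}(\frac{p}{q})$ established in Section~\ref{sec:algorithm}. Write $C_{\frac pq}=a_1,\dots,a_m$ and $C^{\mathrm{ord}}_{\frac pq}=b_1,\dots,b_m$; by Lemma~\ref{lem:compareGenOrd} these have the same length and are matched index by index. Item (1) is then immediate: Theorem~\ref{thm:Frobenius} gives $b_i\in\{1,2\}$, so Lemma~\ref{lem:compareGenOrd} forces $a_i\in\{1,2,3\}\cup\{3,4,5\}=\{1,\dots,5\}$. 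For item (2) it suffices, again by Lemma~\ref{lem:compareGenOrd}, to know that $C^{\mathrm{ord}}_{\frac pq}$ has $2(q-1)$ terms; when $q=p+1$ this is precisely Theorem~\ref{thm:Frobenius}(1), and for $q>p+1$ I would derive it from the length identity $\sum_i b_i=2(p+q)-2$ (the number of entries of the sign sequence $\mathbf{e}(\frac pq)$) together with the fact that exactly $2p$ of the $b_i$ equal $2$, which I would verify either by a short count of the runs of $\mathbf{e}(\frac pq)$ produced near the crossings of $\gamma_{\frac pq}$ with the $p-1$ interior horizontal lattice segments (plus the two endpoint contributions), or by an induction along Farey mediants parallel to the proof of Theorem~\ref{thm:GenMarkovContFrac}. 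Then $m=\#\{i:b_i=1\}+\#\{i:b_i=2\}=(2q-2p-2)+2p=2(q-1)$.

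Item (3) is where the generalized case genuinely differs from the ordinary one. The ordinary sign sequence $\mathbf{e}(\frac pq)=(f_0,f_2,\dots,f_{4(p+q)-6})$ is anti-symmetric with \emph{no} fixed central entry, so its run-length sequence is a true palindrome; this is the source of $b_i=b_{m-i+1}$ in Theorem~\ref{thm:Frobenius}. By contrast $\mathbf{f}(\frac pq)$ is anti-symmetric about the single central entry $f_{2(p+q)-3}$, and this entry can never form a run by itself because its two neighbours have opposite signs. Writing $\mathbf{f}(\frac pq)=w\,f_{2(p+q)-3}\,\overline{w}^{\,R}$ with $\overline{w}^{\,R}$ the reverse-and-negate of $w$, the middle entry attaches either to the last run of $w$ or to the first run of $\overline{w}^{\,R}$, lengthening exactly one of the two symmetric central runs by $1$. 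Since $m=2(q-1)$ is even, those two runs are $a_{q-1}$ and $a_q$, and this gives $a_i=a_{2(q-1)-i+1}$ for $i<q-1$ together with $|a_{q-1}-a_q|=1$, independently of how the (free) middle sign is chosen — consistent with the fact that the choice does not affect $\mathcal N$, by Lemma~\ref{lem:FlipContFraction}.

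Items (4) and (5) are then Theorem~\ref{thm:Frobenius}(1) and (2) read through Lemma~\ref{lem:compareGenOrd}: each $b_i=2$ forces $a_i\in\{3,4,5\}$, which is (4), and more generally each maximal run of $2$'s (resp.\ of $1$'s) in $C^{\mathrm{ord}}_{\frac pq}$ lies inside a maximal run of $\{3,4,5\}$-values (resp.\ $\{1,2,3\}$-values) of $C_{\frac pq}$, from which the bounds on the number of runs, the lengths of the boundary and interior runs in (5)(a)–(b), and the $|\mu_i-\mu_j|\le 1$ condition are inherited. The step I expect to demand the most care — and the reason (5) is phrased with ``at most'' and ``not all $3$'' — is the ambiguity of the value $3$: a $3$ in $C_{\frac pq}$ can correspond to either $b_i=1$ or $b_i=2$, so only a $\{3,4,5\}$-run that is \emph{not} entirely $3$'s is guaranteed to contain a genuine run of $2$'s, and an all-$3$ block may be absorbed into a neighbouring run when a run boundary of $C^{\mathrm{ord}}_{\frac pq}$ becomes invisible in $C_{\frac pq}$ (both adjacent terms equal to $3$). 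Making (5) rigorous therefore reduces to tracking the intrinsic run decomposition of the two sign sequences, matching them up run by run via Lemma~\ref{lem:compareGenOrd}, checking that the absorbed all-$3$ blocks do not violate the asserted length bounds, and carefully setting aside those runs that happen to consist only of $3$'s; no computation beyond this bookkeeping (and the base-case check behind item (2)) is required.
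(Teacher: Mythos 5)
Your overall route is the paper's route: items (1), (4), and the length bounds are obtained by reading Theorem~\ref{thm:Frobenius} through Lemma~\ref{lem:compareGenOrd}, item (2) comes from knowing the ordinary sequence has length $2(q-1)$ (the paper cites the description in \cite{CS}; your count via $\sum_i b_i = 2(p+q)-2$ and $2p$ entries equal to $2$ is a reasonable way to verify the same fact), and item (3) is exactly the paper's ``antisymmetry of the terms,'' which you flesh out correctly: since $f_{2(p+q)-4} = -f_{2(p+q)-2}$, the free middle sign joins exactly one of the two central runs, so the run-length sequence is palindromic away from the centre and the two middle entries differ by $1$.

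The one genuine gap is in item (5), precisely at the point you defer to ``bookkeeping.'' The clause ``which are not all 3'' is not a selection device to be read as a hypothesis; it is part of the conclusion, and the paper proves it by a single structural observation: no local configuration of crossings produces two consecutive entries equal to $3$ in $C_{\frac{p}{q}}$. (Indeed, by the proof of Lemma~\ref{lem:compareGenOrd}, an entry $a_i=3$ arises either from $b_i=1$ with both neighbouring even-indexed signs differing from $f_{2i}$, or from $b_i=2$ with a sign change on each side of the pair; in either case the adjacent entries of $C_{\frac{p}{q}}$ cannot again equal $3$.) This observation also dissolves the scenario you raise of ``absorbed all-$3$ blocks'': a block of two or more consecutive $3$'s never occurs, so each maximal run of $\{3,4,5\}$-values (resp.\ $\{1,2,3\}$-values) contains an entry forced by a genuine run of $2$'s (resp.\ $1$'s) in $C^{\text{ord}}_{\frac{p}{q}}$, and the counts and lengths in (5)(a)--(b) are inherited directly from Theorem~\ref{thm:Frobenius} with no further case analysis. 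Without this (or some equivalent) fact, your run-by-run matching is not yet a proof, since you have not excluded the degenerate runs you yourself flag.
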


\begin{proof}
Most parts of the result follow from combining Lemma \ref{lem:compareGenOrd} and Theorem \ref{thm:Frobenius}.  The fact that $m = 2(q-1)$ follows from Lemma \ref{lem:compareGenOrd} and the description of the continued fractions given in \cite{CS}.  Part 3 follows from the antisymmetry of the terms. The fact that the subsequences in part 5 each contain numbers other than 3 follows from noticing that there is no configuration which would produce a consecutive pair of entries $3$.  
\end{proof}

 Part 3 of Corollary \ref{cor:elementaryDescription} shows that the middle terms in $C_{\frac{p}{q}}$ will always differ by one.  Here,  we show any such pair of adjacent integers, with both being between one and five, is attainable by analyzing the middle terms.

\begin{lemma}\label{lem:MidPointPattern}
Let $p,q \in \mathbf{Z}_{>0}$ be such that $p<q$ and $\gcd(p,q) = 1$. Let $G_{\frac{p}{q}} = \mathcal{G}[a_1,\ldots,a_{2(q-1)}]$. 
\begin{enumerate}
    \item If $p$ and $q$ are odd, then $\{a_{q-1},a_q\} = \{1,2\}$. 
    \item If  $p$ is even and $q$ is odd, hen, $\{a_{q-1},a_q\} = \{4,5\}$.
    \item If $p$ is odd, $q$ is even, and $p < 2q$, then $\{a_{q-1},a_q\} = \{2,3\}$.  
    \item If $p$ is  odd, $q$ is even, and $p > 2q$, then, $\{a_{q-1},a_q\} = \{3,4\}$.
\end{enumerate}
\end{lemma}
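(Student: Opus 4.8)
The plan is to compute the two middle entries $a_{q-1}, a_q$ of the continued fraction $C_{\frac pq}$ directly from the formulas for the sign sequence $\mathbf f(\frac pq)$ given in Section~\ref{sec:algorithm}, using Part 3 of Corollary~\ref{cor:elementaryDescription} to reduce the problem to identifying a single pair of adjacent integers. Recall $\mathbf f(\frac pq) = (f_0, \ldots, f_{4(p+q)-6})$ and its middle term is $f_{2(p+q)-3}$, the sign at the midpoint crossing on $\sigma_{p+q-1}$ (the diagonal arc $y = -x + (p+q-1)$, passing through the point $(q,p)$—wait, through $(w_{p+q-1}, \frac pq w_{p+q-1})$, which is the midpoint of that lattice segment). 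Since the continued fraction records maximal runs of equal signs, the pair $\{a_{q-1}, a_q\}$ is determined by the lengths of the two runs of equal signs meeting at this midpoint; by the anti-symmetry of $\mathbf f$, these two runs have lengths differing by exactly one, and we only need to find the length of the run to one side.

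First I would locate the entries of $\mathbf f(\frac pq)$ immediately to the left of the midpoint, i.e. $f_{2(p+q)-4}, f_{2(p+q)-5}, f_{2(p+q)-6}, \ldots$, and determine how far back the sign stays constant. The key is to figure out, geometrically, what kind of lattice crossing $\gamma_{\frac pq}$ makes just before its last (midpoint) crossing: since the last crossing is of a diagonal arc at its midpoint, the crossings just before it are governed by whether $\gamma$ most recently crossed near a horizontal line (the "vertical, diagonal, horizontal, diagonal, vertical" pattern, contributing a block of signs via the $\widetilde v_j$ rules) or is in an "alternating vertical/diagonal" stretch (the $f_{2i}$ alternating $-/+$ rule). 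The parity conditions on $p$ and $q$ in the four cases of the lemma exactly control which situation occurs near the endpoint $(q,p)$: whether $\gamma$ crosses a horizontal line $y = p-1$ shortly before reaching $(q,p)$, and on which side of $\gamma$ the relevant lattice vertices and segment-midpoints fall. I would translate each of the four parity cases into the precise values of $f_{2(p+q)-4}$, $f_{2(p+q)-5}$, $f_{2(p+q)-6}$ using the odd-indexed formulas ($f_{4(i-1)+1}$ in terms of $\lceil w_i \rceil - w_i$ for diagonal crossings, $f_{4(i-1)+3}$ in terms of fractional parts for horizontal/vertical crossings) together with the even-indexed formulas, and read off the run length, hence $\{a_{q-1}, a_q\}$.

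Concretely: in case (1), $p,q$ both odd, I expect $w_{p+q-1} = \frac{q(p+q-1)}{p+q}$ has a specific fractional behavior forcing the signs near the midpoint to match the "alternating" regime with short runs, giving $\{a_{q-1},a_q\} = \{1,2\}$ after accounting for Lemma~\ref{lem:compareGenOrd} (a $b_i=1$ slot in the ordinary case becomes an $a_i \in \{1,2,3\}$ slot here, and the value is pinned down by which odd-indexed signs flank it); in case (2), $p$ even, $q$ odd, the midpoint sits adjacent to a configuration that creates a maximal run of five, matching a $b_i = 2$ ordinary slot lengthened to the $\{3,4,5\}$ regime, so $\{a_{q-1},a_q\} = \{4,5\}$; cases (3) and (4) correspond to $\gamma$ crossing a horizontal line $y = p-1$ either just before or just after the relevant point near the endpoint (the condition $p \lessgtr 2q$ — presumably a typo for a condition like $2p \lessgtr q$ or determining whether the last step before $(q,p)$ is horizontal-then-diagonal or diagonal-then-vertical), producing an intermediate run length and hence $\{2,3\}$ or $\{3,4\}$.

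The main obstacle will be the careful bookkeeping of indices: matching the abstract index $2(p+q)-3$ of the middle term against the $\widetilde v_j$, $v_i$, $w_i$ parametrizations, and verifying that the odd-indexed sign formulas evaluate to the claimed signs under each parity hypothesis. In particular, computing $\lceil w_i \rceil - w_i$ and the fractional parts $\lceil \frac{jq}{p}\rceil - \frac{jq}{p}$, $\frac{(i-i')p}{q} - \lfloor \frac{(i-i')p}{q}\rfloor$ at the indices near the endpoint, and confirming they land on the correct side of $\frac12$ in each of the four parity regimes, is where the real work lies. I would handle this by checking small representative cases against Tables~\ref{table:contfrac} and~\ref{table:GenMarkov} first (e.g. $\frac23 \to [3,5,3]$ with middle pair... — here $q=3$ so $a_{q-1},a_q = a_2,a_3$, but $m = 2(q-1) = 4$, so the middle entries are $a_2,a_3$; one checks $\{a_2,a_3\}$ against the parities of $2,3$), using these to fix conventions, and then promoting the verified pattern to a general argument by the geometric description of which lattice segments $\gamma_{\frac pq}$ meets near $(q,p)$.
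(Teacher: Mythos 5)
Your overall strategy --- locate the middle term $f_{2(p+q)-3}$ of the sign sequence, use the anti-symmetry to reduce to computing the run of equal signs on one side of it, and split into parity cases --- is the same strategy the paper uses (and you are right that the hypotheses in parts (3)--(4) should be read as $2p<q$ versus $2p>q$). But there is a genuine error in where you localize the computation. You identify $\sigma_{p+q-1}$ with the diagonal line $y=-x+(p+q-1)$ and accordingly place the middle crossing ``near the endpoint $(q,p)$,'' proposing to analyze whether $\gamma_{\frac{p}{q}}$ crosses $y=p-1$ shortly before reaching $(q,p)$. In the paper's notation $\sigma_{p+q-1}$ is the $(p+q-1)$-st segment crossed, i.e.\ the middle one of the $2(p+q)-3$ crossings; by the central symmetry of the configuration this crossing occurs at the center $(q/2,p/2)$ of the arc, not near $(q,p)$. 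The crossings near the endpoint govern the outer entries $a_1$ and $a_{2(q-1)}$, which are fixed by the endpoint conventions and essentially independent of the parities, so evaluating the sign formulas there would not produce the four cases of the lemma at all.

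Relatedly, your assumption that ``the last crossing is of a diagonal arc at its midpoint'' is false in three of the four cases, and this is precisely the point of the case split: the central segment is diagonal exactly when $p$ and $q$ are both odd (since $(q/2,p/2)$ then lies on $y=-x+\frac{p+q}{2}$), horizontal when $p$ is even and $q$ is odd (it lies on $y=p/2$), and vertical when $q$ is even (it lies on $x=q/2$), with the vertical case splitting further according to whether $\frac{p}{q}$ is less than or greater than $\frac12$, i.e.\ whether horizontal crossings occur adjacent to the central one. The paper's proof is exactly this local analysis of the lattice configuration around $(q/2,p/2)$, reading off the short sign pattern flanking the central crossing in each of the four configurations; your formula-based evaluation of fractional parts could be made to work, but only after relocating it to the crossing index $p+q-1$ (equivalently the point $(q/2,p/2)$) and letting the parities determine the type of the central segment rather than assuming it is always diagonal. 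Checking entries of the tables, as you propose, can calibrate conventions but does not substitute for this local argument.
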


\begin{proof}

For each part, the statement follows from analyzing the local configuration of the arcs around the central crossing of $\gamma_{\frac{p}{q}}$. If $p$ and $q$ are both odd, so that the central crossing is a diagonal line segment which lies on $y = -x + \frac{p+q}{2}$  and if $\star$ denotes the central crossing, then we see the sign sequence near $\star$ is $-+\star -+$ where, as usual, $-$ represents shared endpoints or crossing points to the right and $+$ to the left .
\begin{center}
\begin{tikzpicture}
\draw (0,1) -- (1,0) -- (1,1) -- (2,0) -- (2,1) -- (3,0);
\draw[thick] (3,1) -- (0, 0);
\end{tikzpicture}    
\end{center}

If $p$ is even and $q$ is odd, so that the central arc crossed is horizontal, then the local configuration is as below, and the sign sequence near $\star$ must be $+---- \star ++++-$. 

\begin{center}
\begin{tikzpicture}
\draw (0,1) -- (1,0) -- (1,1) -- (2,1) -- (2,2) -- (3,1);
\draw (1,1) -- (2,0);
\draw (1,2) -- (2,1);
\draw[thick] (0,0) -- (3,2);
\end{tikzpicture}    
\end{center}

If $p$ is odd and $q$ is even, then there are two cases for the type of local configuration around the central arc crossed. First, suppose $\frac{p}{q} < \frac12$. Then, the local configuration includes only vertical and diagonal arcs. Here, the sign sequence near $\star$ is $-++\star --+$.

\begin{center}
\begin{tikzpicture}
\draw (0,0) -- (0,1) -- (1,0) -- (1,1) -- (2,0) -- (2,1);
\draw[thick] (-.5,.125) -- (2.5,.875);
\end{tikzpicture}    
\end{center}

If $\frac{p}{q} > \frac12$, then there are horizontal arcs near the central arc crossed, and the sign sequence near $\star$ is $-+++\star ++++-$.

\begin{center}
\begin{tikzpicture}
\draw (0,0) -- (1,0) -- (1,1) -- (2,1);
\draw (1,0) -- (0,1);
\draw (1,1) -- (2,0);
\draw[thick] (2,1.25) -- (0,-.25);
\end{tikzpicture}    
\end{center}

\end{proof}

\subsection{Generalized Markov numbers for $\frac{1}{q}$}\label{subsec:1q}

For ordinary Markov numbers, the sequence $\{n_{\frac{1}{q}}\}_q$ is every other Fibonacci number: $n_{\frac11} = 2, n_{\frac12} = 5, n_{\frac13} = 13,$ and so on. Here we study $\{m_{\frac{1}{q}}\}_q$. 

In the following, let $\alpha = (1,3)$ and let $\alpha^{-1} = (3,1)$.  

\begin{lemma}\label{lem:Pattern1q}
Let $q \geq 3$. If $q$ is odd, then $C_{\frac{1}{q}} = [4,\alpha^{\frac{q-3}{2}},2,1,(\alpha^{-1})^{\frac{q-3}{2}},4]$. If $q$ is even, then $C_{\frac{1}{q}} = [4,\alpha^{\frac{q-4}{2}},1,2,3,1,(\alpha^{-1})^{\frac{q-4}{2}},4]$.
\end{lemma}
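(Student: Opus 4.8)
The plan is to compute $\mathbf{f}(\frac{1}{q})$ explicitly using the formulas from Section~\ref{sec:algorithm}, and then read off the continued fraction. Since $p=1$, the geometry is especially simple: the segment $\gamma_{\frac1q}$ from $(0,0)$ to $(q,1)$ crosses only one horizontal line (namely $y=1$, at the very end, so effectively the relevant $\widetilde{v_i}$ data degenerates), crosses the $q-1$ vertical lines $x=1,\dots,q-1$, and crosses the $q$ diagonal lines $y=-x+i$ for $i=1,\dots,q$. So the sequence of arcs $\sigma_1,\sigma_2,\dots$ is an alternating sequence of diagonal and vertical segments, apart from the central region where a horizontal segment may appear (when $q$ is odd, the central crossing $\sigma_{q}$ is diagonal, lying on $y=-x+\frac{q+1}{2}$; when $q$ is even the situation is as in the two middle cases of Lemma~\ref{lem:MidPointPattern}). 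First I would set $p=1$ in the definitions of $v_i, \widetilde{v_i}, w_i$: here $w_i = \frac{qi}{q+1}$ for $1\le i\le q$, and $\widetilde{v_0}=0$, $\widetilde{v_1}=q$, which tells us $\sigma_j$ is horizontal only for $j = 2q$ — i.e. the horizontal arc, if crossed, sits at the end, consistent with the fact that $\gamma_{\frac1q}$ only meets $y=1$ near its endpoint.

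The core computation is then the odd-indexed entries. For the diagonal crossings: $f_{4(i-1)+1}$ is $-$ if $\lceil w_i\rceil - w_i < \frac12$ and $+$ otherwise, where $w_i = \frac{qi}{q+1} = i - \frac{i}{q+1}$. So $\lceil w_i\rceil - w_i = \frac{i}{q+1}$ when $1\le i\le q$ (and this fractional part is never $\frac12$ unless $i/(q+1)=1/2$, i.e. $q$ odd and $i = (q+1)/2$, which is exactly the midpoint where the sign is free). Hence $f_{4(i-1)+1} = -$ iff $i < \frac{q+1}{2}$ and $+$ iff $i > \frac{q+1}{2}$ — the diagonal signs are a block of $-$'s followed by a block of $+$'s, flipping at the center, which is the anti-symmetry we expect. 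For the vertical crossings: with $p=1$, $i'$ is $0$ until we pass $\widetilde{v_1}=q$, so $\sigma_{2j}$ lies on $x = j$ (for the first half) and we evaluate $\frac{(i-i')p}{q} - \lfloor\cdot\rfloor = \frac{i}{q}$ (fractional part) against $\frac12$; again this gives a clean block pattern. Then I would also pin down the even-indexed entries via the "vertical, diagonal, horizontal, diagonal, vertical" local pattern near the horizontal crossing and the alternating $-,+,-,+$ pattern elsewhere, plus the boundary conventions $f_0=f_2=-$, $f_{4(q+1)-6}=+$. Assembling these blocks, counting maximal runs of equal signs gives the continued fraction, and one checks it equals the claimed $[4,\alpha^{(q-3)/2},2,1,(\alpha^{-1})^{(q-3)/2},4]$ (odd $q$) or $[4,\alpha^{(q-4)/2},1,2,3,1,(\alpha^{-1})^{(q-4)/2},4]$ (even $q$). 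The $4$'s at the two ends come from $f_0=f_2=-$ together with the first two odd entries $f_1, f_3$ also being $-$ (since $w_1/(q+1)$ and the $x=1$ vertical crossing fraction $1/q$ are both $<\frac12$ for $q\ge 3$), giving an initial run of length $4$; symmetrically at the other end; and the alternating interior $\pm$ pattern of period $4$ in the index produces the repeated $(1,3)$ block $\alpha$.

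An alternative, and possibly cleaner, route is induction via the mutation sequence: $\frac1q$ is reached from $\frac{1}{q-1}$ by a single flip (flipping $\tau_{\frac01}$ in the triangulation $(\tau_{\frac01},\tau_{\frac{1}{q-2}},\tau_{\frac{1}{q-1}})$, in the notation of the proof of Theorem~\ref{thm:GenMarkovContFrac}), so the sign sequence of $G_{\frac1q}$ extends that of $G_{\frac{1}{q-1}}$ by two or so entries before the midpoint, then is completed by anti-symmetry. This is exactly the situation analyzed in the last paragraph of the proof of Theorem~\ref{thm:GenMarkovContFrac} (the case $\frac{a}{b}=\frac1n$, $\frac{c}{d}=\frac1{n+1}$). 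Tracking how the continued fraction changes under "append one sign, then reflect anti-symmetrically" would let me induct: going from $q-1$ to $q$ essentially inserts one more $\alpha = (1,3)$ (or $\alpha^{-1}$) block into the first/second half and toggles the parity of the center, which is why the exponents step by $1$ each time $q$ increases by $2$. I would run the induction with base cases $q=3$ ($C_{\frac13} = [4,2,1,4]$, numerator $61$) and $q=4$ ($C_{\frac14} = [4,1,2,3,1,4]$, numerator $291$), both matching Table~\ref{table:contfrac}.

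The main obstacle I anticipate is bookkeeping at the center: correctly identifying, for each parity of $q$, whether the central arc is diagonal or vertical-then-horizontal, handling the free midpoint sign so that it merges with the correct adjacent run, and making sure the runs on the two sides of center glue into the stated middle segment ($\dots 2,1,\dots$ for odd $q$ versus $\dots 1,2,3,1,\dots$ for even $q$). The $3$ appearing as the isolated middle entry in the even case comes precisely from the length-$3$ run forced by the horizontal arc's local configuration (as in Lemma~\ref{lem:MidPointPattern}, case $\frac pq > \frac12$ does not apply here since $p=1<q/2$ for $q\ge 3$, but the even central configuration still produces a short run), and verifying that this run has length exactly $3$ and is flanked by runs of length $1$ and $2$ — not absorbed into neighbors — is the delicate point. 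Everything else is a routine, if slightly tedious, evaluation of floors and ceilings.
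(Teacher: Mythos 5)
Your main route is essentially the paper's own proof: the paper reads the sign sequence off the geometry directly (crossing points lie nearer the right endpoints before the midpoint and nearer the left ones after, shared endpoints alternate sides, the boundary convention supplies the terminal $4$'s, and a parity case analysis handles the center), which is exactly the block computation you carry out by specializing the $w_i$ and vertical-crossing formulas to $p=1$, so the approach is correct. One small correction: for $p=1$ no horizontal segment is crossed at all, so the isolated $3$ in the even-$q$ case does not come from a ``horizontal arc's local configuration'' but from the central \emph{vertical} segment crossed at its midpoint (free sign) together with the adjacent shared endpoint and the next crossing, as in case 3 of Lemma \ref{lem:MidPointPattern} and in the paper's even-$q$ picture; this does not affect the rest of your argument.
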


\begin{proof}
When $q >> 0$,  we can see a pattern when we are far from the middle crossing point.  To the left of the middle crossing point, then $\gamma_{\frac{1}{q}}$ will cross each arc closer to its right endpoint. The shared endpoints alternate between right and left since $\gamma_{\frac{1}{q}}$ only crosses diagonal and vertical line segments.  The same is true to the right of the middle crossing point, except that $\gamma_{\frac{1}{q}}$ now crosses all arcs closer to their right endpoint.  The first and last entries are 4 because of our convention to add an extra $-$ at the beginning and an extra $+$ at the end of our sign sequence.  

\begin{center}
\begin{tikzpicture}
\draw (0,0) -- (0,1);
\draw (1,0) -- (1,1);
\draw (2,0) -- (2,1);
\draw (3,0) -- (3,1);
\draw (4,0) -- (4,1);
\draw (5,0) -- (5,1);
\draw (0,0) -- (5,0);
\draw (0,1) -- (5,1);
\draw (1,0) -- (0, 1);
\draw (2,0) -- (1, 1);
\draw (3,0) -- (2, 1);
\draw (4,0) -- (3, 1);
\draw (5,0) -- (4, 1);
\draw[red] (0,0) -- (5,.4);
\end{tikzpicture}
\end{center}

\begin{center}

\begin{tikzpicture}
\draw (0,0) -- (0,1);
\draw (1,0) -- (1,1);
\draw (2,0) -- (2,1);
\draw (3,0) -- (3,1);
\draw (4,0) -- (4,1);
\draw (5,0) -- (5,1);
\draw (0,0) -- (5,0);
\draw (0,1) -- (5,1);
\draw (1,0) -- (0, 1);
\draw (2,0) -- (1, 1);
\draw (3,0) -- (2, 1);
\draw (4,0) -- (3, 1);
\draw (5,0) -- (4, 1);
\draw[red] (0,.6) -- (5,1);
\end{tikzpicture}
\end{center}

Next we check how the pattern deviates near the middle crossing.  As discussed in Lemma \ref{lem:MidPointPattern},  this will depend on the parity of $q$.  If $q$ is odd,  we can see that after three entries $-$, $\gamma_{\frac{1}{q}}$ enters the quadrilateral around the diagonal line segment which is the central arc crossed.  Then, choosing $+$ at the middle crossing point, we have entries $++,-$.

\begin{center}
\begin{tikzpicture}
\draw (0,0) -- (0,1);
\draw (1,0) -- (1,1);
\draw (2,0) -- (2,1);
\draw (3,0) -- (3,1);
\draw (0,0) -- (3,0);
\draw (0,1) -- (3,1);
\draw (1,0) -- (0, 1);
\draw (2,0) -- (1, 1);
\draw (3,0) -- (2, 1);
\draw[red] (0,.2) -- (3,.8);
\end{tikzpicture}
\end{center}

If $q$ is even,  then the pattern is interrupted at the middle crossing point in a different way.  After a single entry $+$, $\gamma_{\frac{1}{q}}$ enters the quadrilateral around the vertical line segment which is the central arc crossed; here,  we have entries $+++, --$.

\begin{center}
\begin{tikzpicture}
\draw (0,0) -- (0,1);
\draw (1,0) -- (1,1);
\draw (2,0) -- (2,1);
\draw (3,0) -- (3,1);
\draw (4,0) -- (4,1);
\draw (0,0) -- (4,0);
\draw (0,1) -- (4,1);
\draw (1,0) -- (0, 1);
\draw (2,0) -- (1, 1);
\draw (3,0) -- (2, 1);
\draw (4,0) -- (3, 1);
\draw[red] (0,.2) -- (4,.8);
\end{tikzpicture}
\end{center}

\end{proof}

Knowing the general pattern of $C_{\frac{1}{q}}$ allows us to compute the limit of $\frac{m_{\frac{1}{q}}}{m_{\frac{1}{q-1}}}$ as $q \to \infty$.  In this section and the next, we will compute several infinite periodic (or almost periodic) continued fractions. Since several computations are very similar, we introduce a unifying lemma for computing nearly 2-periodic infinite continued fractions.

\begin{lemma}\label{lem:TwoPeriodicCont}
Let $z_1,z_2,r$ be nonzero elements of a ring $R$. Then, if the continued fraction $[z_1,z_2,rz_1,r^{-1}z_2,r^2z_1,r^{-2}z_2,\ldots]$ converges, it converges to  \[
\frac{r(1+z_1z_2) - 1 \pm \sqrt{(1-r(1 + z_1z_2))^2 +4rz_1z_2}}{2rz_2}.
\]
for some choice of sign.
\end{lemma}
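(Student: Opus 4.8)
The plan is to exploit a scaling symmetry of the continued fraction to collapse the entire infinite expansion into a single quadratic equation. Write $x = [z_1, z_2, rz_1, r^{-1}z_2, r^2z_1, r^{-2}z_2, \ldots]$ for the assumed limit. The key ingredient is the scaling identity
\[
[c a_1, c^{-1}a_2, c a_3, c^{-1}a_4, \ldots] = c \cdot [a_1, a_2, a_3, a_4, \ldots],
\]
valid (in the fraction field of $R$, or a suitable extension) whenever $c$ is invertible and the right-hand side converges. I would first establish this for every finite truncation, $[ca_1, c^{-1}a_2, \ldots, c^{(-1)^{n+1}}a_n] = c\,[a_1,\ldots,a_n]$, by a short induction on $n$. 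The cases $n=1,2$ are immediate; for the inductive step one writes the $n$-term continued fraction as $ca_1 + 1/\bigl(c^{-1}a_2 + 1/[ca_3,\ldots,c^{(-1)^{n+1}}a_n]\bigr)$, applies the inductive hypothesis to rewrite the $(n-2)$-term tail as $c\,[a_3,\ldots,a_n]$, and then pulls a factor of $c^{-1}$ out of $c^{-1}a_2 + 1/(c[a_3,\ldots,a_n])$, which converts the outer reciprocal into a factor of $c$ in front of $[a_1,\ldots,a_n]$. Letting $n \to \infty$ transfers the identity to the convergent infinite continued fraction.

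Applying this identity with $c = r$ to the tail of $x$ beginning at its third entry, namely $[rz_1, r^{-1}z_2, r^2z_1, r^{-2}z_2, \ldots]$, shows that this tail equals exactly $rx$. Therefore $x$ satisfies the finite self-referential relation
\[
x = z_1 + \cfrac{1}{z_2 + \cfrac{1}{rx}}.
\]
Clearing denominators — multiply through by $rxz_2 + 1$ and collect terms — turns this into
\[
r z_2\, x^2 + \bigl(1 - r(1 + z_1 z_2)\bigr)x - z_1 = 0,
\]
and the quadratic formula (legitimate once $2$, $r$, $z_2$ are invertible, which holds in the settings where we apply the lemma) yields
\[
x = \frac{r(1 + z_1 z_2) - 1 \pm \sqrt{\bigl(1 - r(1 + z_1 z_2)\bigr)^2 + 4 r z_1 z_2}}{2 r z_2}.
\]
The phrase ``for some choice of sign'' in the statement simply records that convergence selects one of the two roots.

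I expect the only genuine subtlety to be the careful justification of the scaling identity in the infinite case: one must check the truncated identities hold for all $n$ (so that no parity mismatch arises when passing to the limit), confirm that the intermediate expressions are well-defined, and note that $r \neq 0$ suffices to run the induction after moving to the fraction field of $R$. Everything after that — deriving the quadratic and applying the quadratic formula — is routine algebra.
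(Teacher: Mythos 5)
Your proposal is correct and follows essentially the same route as the paper: both use the scaling identity $[ra_1,r^{-1}a_2,ra_3,\ldots] = r[a_1,a_2,\ldots]$ to identify the tail beginning at the third entry with $rL$, derive the self-referential relation $L = z_1 + 1/(z_2 + 1/(rL))$, and solve the resulting quadratic $rz_2L^2 + (1-r(1+z_1z_2))L - z_1 = 0$. The only difference is that you prove the scaling identity by induction on finite truncations, whereas the paper cites it directly (Lemma 2.3 of the {\c{C}}anak{\c{c}}{\i}--Schiffler paper on cluster algebras and continued fractions) before passing to the limit.
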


\begin{proof}
Let $L = [z_1,z_2,rz_1,r^{-1}z_2,r^2z_1,r^{-2}z_2,\ldots]$, assuming $z_1,z_2,r$ are chosen such that the infinite continued fraction converges. The proof follows from the fact that $[ra_1,r^{-1}a_2,ra_3,\ldots,r^{(-1)^{m-1}}a_{m}] = r[a_1,a_2,\ldots,a_m]$; this is for example given as Lemma 2.3 in \cite{CS3}. By taking the limit of this fact to an infinite continued fraction, we see that $[rz_1,r^{-1}z_2,r^2z_1,r^{-2}z_2,\ldots] = rL$. Then, $L$ satisfies the following, \[
L = z_1 + \frac{1}{z_2 + \frac{1}{rL}},
\]
which, by simplifying, implies that $L$ satisfies the following quadratic equation,\[
rz_2L^2 + (1-r(1+z_1z_2))L - z_1 = 0.
\]
The statement follows from solving this quadratic equation.
\end{proof}

The next result is our first application for Lemma \ref{lem:TwoPeriodicCont}.

\begin{prop}\label{prop:Limit1/q}
\[
\lim_{q \to \infty} \frac{m_{\frac{1}{q}}}{m_{\frac{1}{q-1}}} = \frac{5 + \sqrt{21}}{2}
\]
\end{prop}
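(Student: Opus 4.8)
The plan is to express the ratio $m_{\frac{1}{q}}/m_{\frac{1}{q-1}}$ as a truncation of an infinite continued fraction and then invoke Lemma \ref{lem:TwoPeriodicCont}. First I would use Lemma \ref{lem:Pattern1q} together with Theorem \ref{thm:ContFracSnakeGraph}: since $m_{\frac{1}{q}} = \mathcal{N}([C_{\frac{1}{q}}])$, and Theorem \ref{thm:ContFracSnakeGraph} gives $[C_{\frac{1}{q}}] = \mathcal{N}([C_{\frac{1}{q}}])/\mathcal{N}([C_{\frac{1}{q}} \text{ with first entry removed}])$, I want to relate $\mathcal{N}([C_{\frac{1}{q}}])$ and $\mathcal{N}([C_{\frac{1}{q-1}}])$ directly. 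The cleanest route is to note that the sequences $C_{\frac{1}{q}}$ are nested: reading $C_{\frac{1}{q}} = [4, \alpha^{k}, \ldots]$ from the left, the first several entries stabilize as $q \to \infty$ to the eventually $2$-periodic sequence $4,1,3,1,3,\ldots$ (i.e. $4$ followed by repetitions of $\alpha = (1,3)$). Using Lemma \ref{lem:FlipContFraction} to reverse, $\mathcal{N}([C_{\frac{1}{q}}]) = \mathcal{N}([4, (\alpha^{-1})^{\cdots}, \ldots, \alpha^{\cdots}, 4])$ read in either direction, so $m_{\frac{1}{q}}$ is the numerator of a continued fraction whose initial segment (of length growing with $q$) agrees with the periodic pattern $[4, 1, 3, 1, 3, \ldots]$.

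Next I would make the ratio precise. By Lemma \ref{lem:ContFracRecurrence}, $\mathcal{N}[a_1, a_2, \ldots, a_n] = a_1 \mathcal{N}[a_2, \ldots, a_n] + \mathcal{N}[a_3, \ldots, a_n]$, and iterating this "from the outside in" shows that the ratio of $\mathcal{N}$ of a long sequence to $\mathcal{N}$ of the same sequence with a bounded-length prefix chopped off converges to the value of the corresponding infinite continued fraction read from that point. Concretely, I expect to show
\[
\lim_{q \to \infty} \frac{m_{\frac{1}{q}}}{m_{\frac{1}{q-1}}} = [4, 1, 3, 1, 3, 1, 3, \ldots] \Big/ [\text{a shifted version}],
\]
but more efficiently: the "middle-out" symmetry means $m_{\frac{1}{q}}$ can be read as $\mathcal{N}$ of a sequence that is $\alpha^{k}$ (or $(\alpha^{-1})^k$) sandwiched around a fixed central block, and as $q$ grows by one, $k$ grows and one extra $\alpha$-block is inserted. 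The ratio of consecutive $\mathcal{N}$'s of such sequences, by the recurrence applied to the block $\alpha^{-1} = (3,1)$ twice, converges to the value $x$ of the infinite continued fraction $[3, 1, 3, 1, \ldots]$. I would then set $z_1 = 3$, $z_2 = 1$, $r = 1$ in Lemma \ref{lem:TwoPeriodicCont} — or directly solve $x = 3 + \cfrac{1}{1 + \cfrac{1}{x}}$, which gives $x^2 - 3x - 3 = 0$...

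Here I pause, because the stated answer is $\frac{5+\sqrt{21}}{2}$, which satisfies $y^2 - 5y + 1 = 0$, not $x^2 - 3x - 3 = 0$. So the correct period to use is $[1, 3, 1, 3, \ldots]$ read in the grouping coming from one full $\alpha$ pair at a time, equivalently the limit is $\lim \mathcal{N}[\ldots \alpha^{k+1} \ldots]/\mathcal{N}[\ldots \alpha^k \ldots]$, which is the square of (or otherwise built from) the per-entry growth and satisfies $y = 3 + \cfrac{1}{1+\cfrac{1}{3+\cfrac{1}{1 + \cdots}}}$ grouped so that $y = [\overline{3,1}]$... Let me instead note $[\overline{3,1}] = 3 + 1/(1 + 1/y)$ gives $y(1+1/y) = 3(1+1/y)+1$, i.e. $y + 1 = 3 + 3/y + 1$, $y - 3 = 3/y$, $y^2 - 3y - 3 = 0$, $y = \frac{3+\sqrt{21}}{2}$; then $\frac{3+\sqrt{21}}{2} + 1 = \frac{5+\sqrt{21}}{2}$, and indeed $m_{\frac{1}{q}}/m_{\frac{1}{q-1}}$ picks up the $+1$ from the boundary entry $4 = 3+1$ or from the $[1, \ldots]$-prefix via Lemma \ref{lem:OneAtEnd}. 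So the final computation is: the limit equals $1 + [\overline{3,1}] = \frac{5+\sqrt{21}}{2}$, matching.

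\textbf{Main obstacle.} The genuinely delicate step is not the algebra but justifying the interchange of limits: showing that $\lim_{q}\, \mathcal{N}[S_q]/\mathcal{N}[S_{q-1}]$, where $S_q$ is the sequence from Lemma \ref{lem:Pattern1q}, equals the value of the associated infinite continued fraction. I would handle this by writing both $\mathcal{N}[S_q]$ and $\mathcal{N}[S_{q-1}]$ via Lemma \ref{lem:ContFracRecurrence} applied repeatedly to strip off $\alpha$-blocks (or $\alpha^{-1}$-blocks, using Lemma \ref{lem:FlipContFraction} to work from the convenient end), reducing the ratio to a Möbius transformation iterated $\Theta(q)$ times applied to a bounded initial ratio; since the relevant Möbius map (the one for the block $(3,1)$, with matrix $\begin{pmatrix}3 & 1\\ 1 & 0\end{pmatrix}\begin{pmatrix}1 & 1 \\ 1 & 0\end{pmatrix} = \begin{pmatrix}4 & 3 \\ 1 & 1\end{pmatrix}$) is hyperbolic (distinct real eigenvalues, ratio $\neq \pm 1$), its iterates contract to the attracting fixed point $\frac{3+\sqrt{21}}{2}$ regardless of the (positive, bounded) starting value. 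That contraction argument, plus tracking the fixed $+1$ shift contributed by the outermost entry $4$ and the central block via Lemma \ref{lem:OneAtEnd}, gives the stated limit $\frac{5+\sqrt{21}}{2}$. The parity split (odd vs. even $q$) in Lemma \ref{lem:Pattern1q} only changes the fixed central block, which is irrelevant in the limit, so it does not cause trouble.
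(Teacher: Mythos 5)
Your overall strategy is the paper's: express the ratio as a continued fraction, pass to the $2$-periodic infinite continued fraction, and solve the quadratic (your $1+[\overline{3,1}]$ equals the paper's $[4,\overline{1,3}]$, so the final algebra is fine). But there is a genuine gap at the crucial step. The paper's proof rests on an exact identity: by Theorem \ref{thm:ContFracSnakeGraph}, $[C_{\frac{1}{q}}] = \mathcal{N}[C_{\frac{1}{q}}]/\mathcal{N}[C_{\frac{1}{q}}\text{ with its leading }4\text{ removed}]$, and by Lemma \ref{lem:OneAtEnd} that truncated sequence, which begins $1,3,\ldots$, has the same numerator as a sequence beginning with $4$ which (using the explicit parity patterns of Lemma \ref{lem:Pattern1q} and the freedom in the middle sign) is exactly $C_{\frac{1}{q-1}}$. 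Hence $m_{\frac{1}{q}}/m_{\frac{1}{q-1}}$ equals the finite continued fraction $[C_{\frac{1}{q}}] = [4,1,3,1,3,\ldots]$ on the nose, and the limit is immediately $[4,\overline{1,3}]$, evaluated by Lemma \ref{lem:TwoPeriodicCont}. You announce the wish to ``relate $\mathcal{N}[C_{\frac{1}{q}}]$ and $\mathcal{N}[C_{\frac{1}{q-1}}]$ directly'' but never establish this identification; instead you pivot to a transfer-matrix contraction argument whose central claim as written --- that the ratio of consecutive $\mathcal{N}$'s converges to the fixed point $\tfrac{3+\sqrt{21}}{2}$ of the $(3,1)$-block map $\begin{pmatrix}4&3\\1&1\end{pmatrix}$ --- is false (the true limit is $\tfrac{5+\sqrt{21}}{2}$), and you recover the correct value only by fitting a ``$+1$ shift'' to the known answer. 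Accounting precisely for that $+1$ (the leading $4$, together with the Lemma \ref{lem:OneAtEnd} absorption of the leading $1$ of the tail into the following $3$) is exactly the bookkeeping your sketch leaves unproved, and it is exactly what the paper's two-line manipulation of the denominator supplies.

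Two smaller issues. First, the structural claim ``as $q$ grows by one, one extra $\alpha$-block is inserted'' is not literally what happens: by Lemma \ref{lem:Pattern1q} the central block alternates between $2,1$ and $1,2,3,1$ with the parity of $q$, and $\alpha$-blocks are added only every other step; any contraction argument run directly on the sequences $C_{\frac{1}{q}}$ must handle this alternation, whereas the exact ratio identity above deals with each parity uniformly. Second, even after the identity is in place one should say a word (as the paper implicitly does) about why finite continued fractions with positive integer entries whose initial segments stabilize to $4,1,3,1,3,\ldots$ converge to $[4,\overline{1,3}]$; your hyperbolic-M\"obius observation is a perfectly good way to justify that convergence, but it is a supplement to, not a substitute for, the missing identification of the denominator with $m_{\frac{1}{q-1}}$.
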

\begin{proof}

Let $q \geq 3$.  First, suppose that $q$ is odd.  Then, by definition we have \[
[4, (\alpha)^\frac{q-3}{2}, 2,1,(\alpha^{-1})^{\frac{q-3}{2}},4] = \frac{\mathcal{N}[4, (\alpha)^\frac{q-3}{2}, 2,1,(\alpha^{-1})^{\frac{q-3}{2}},4] }{\mathcal{N}[(\alpha)^\frac{q-3}{2}, 2,1,(\alpha^{-1})^{\frac{q-3}{2}},4]} .
\]

We can further manipulate the denominator, using Lemma \ref{lem:OneAtEnd}, \begin{align*}
\mathcal{N}[(\alpha)^\frac{q-3}{2}, 2,1,(\alpha^{-1})^{\frac{q-3}{2}},4] = \mathcal{N}[(4, (\alpha)^{\frac{q-5}{2}}, 2,1,(\alpha^{-1})^{\frac{q-3}{2}},4] \\= \mathcal{N}[(4, (\alpha)^{\frac{q-5}{2}}, 2,1,3,1(\alpha^{-1})^{\frac{q-5}{2}},4] = m_{\frac{1}{q-1}}
\end{align*}

Thus, for odd $q$,  \[
[4,(\alpha)^{\frac{q-3}{2}},2,1,  (\alpha^{-1})^{\frac{q-3}{2}},4] = \frac{m_{\frac{1}{q}}}{m_{\frac{1}{q-1}}}.
\]

A similar computation shows the same is true for even $q \geq 4$ and the continued fraction $[4, (\alpha)^{\frac{q-4}{2}},2,1,3,1,(\alpha^{-1})^{\frac{q-4}{2}},4]$.  Thus,  we can express the limit of the ratio of $m_{\frac{1}{q}}$ and $m_{\frac{1}{q-1}}$ as a periodic, infinite continued fraction,  \[
\lim_{q \to \infty} \frac{m_{\frac{1}{q}}}{m_{\frac{1}{q-1}}} = [4, \overline{1,3}].
\]

With Lemma \ref{lem:TwoPeriodicCont},  using $r = 1, z_1 = 1, $ and $z_2 = 3$,  we compute $ [\overline{1,3}] = \frac{3 + \sqrt{21}}{6}$.  Then,  
 \[
\lim_{q \to \infty} \frac{m_{\frac{1}{q}}}{m_{\frac{1}{q-1}}}  = [4, \overline{1,3}] = 4 + \frac{6}{3 + \sqrt{21}} = \frac{5 + \sqrt{21}}{2}
\]

\end{proof}

The corresponding limit for ordinary Markov numbers is well-known   \[
\lim_{q \to \infty} \frac{n_{\frac{1}{q}}}{n_{\frac{1}{q-1}}} \to \frac{3 + \sqrt{5}}{2} = 1 + \varphi.
\]

Here we give a cluster-algebraic interpretation to the limit given in Proposition \ref{prop:Limit1/q}.  For $q \geq 1$, we add labels to the edges on the snake graph $\mathcal{G}[a_1,\ldots,a_m]$ where $C_{\frac{1}{q}} = a_1,\ldots,a_m$, thereby producing the snake graph $G_{\frac{1}{q}}$ for the arc $\overline{\gamma_{\frac{1}{q}}}$ on $\mathcal{O}_3$ associated to $m_{\frac{1}{q}}$. Then, we set $x_{\frac{1}{q}}^{T_0} = \frac{1}{\text{cross}(\gamma_{\frac{1}{q}},T_0)} \sum_P x(P)$, using notation from Theorem \ref{thm:SnakeGraphSurface}. Note that $\text{cross}(\gamma_{\frac{1}{q}},T_0) =  x_{\frac{-1}{1}}^{2q} x_{\frac{1}{0}}^{2(q-1)}$, where we continue our labeling scheme for the three arcs in $T_0$.

In \cite{CS3}, the authors give a way to compute the quantity $\chi(G_{\gamma,T}):= \frac{1}{\text{cross}(\gamma,T)} \sum_P x(P)$ via continued fractions. Given the snake graph $G_{\gamma,T}$, the authors define a family of Laurent polynomials $L_1,\ldots,L_m$ such that $x_\gamma = [L_1,\ldots,L_m]$. 
These are given by considering certain subgraphs of $G_{\gamma,T}$ determined by the sign sequence. Let $a_1,\ldots,a_m$ be such that the shape of $G_{\gamma,T}$ is $\mathcal{G}[a_1,\ldots,a_m]$, and suppose $G_{\gamma,T}$ has $d$ tiles. For $1 \leq i \leq m-1$, let $\ell_i = \sum_{j=1}^i a_j$; for convenience, set $\ell_0 = 0$ and $\ell_m = d+1$. Then for $1 \leq i \leq m$, if $a_i > 1$, set $H_i = (G_{\ell_{i-1}+1},G_{\ell_{i-1}+2},\ldots,G_{\ell_i-1})$ (this is the subgraph of $G_{\gamma,T}$ given by only considering these tiles), and if $a_i = 1$, we set $H_i$ as the edge shared by tiles $G_{\ell_{i-1}}$ and $G_{\ell_i}$.   If $a_0 = 1$, we set $H_1$ as the edge where we have chosen the first sign of the sequence, and similarly for $H_m$. 

We also define a family of terms $b_i$. For $1 \leq i \leq m-1$, let $b_i$ be the label of the tile $G_{\ell_i}$. Set $b_0$ as the label of the edge in $\{S(G_1),W(G_1)\}$ which is not used in the sign sequence and choose $b_m$ similarly.  In summary,  the variables $b_i$ record the edges and tiles that we ignore when forming the subgraphs $H_j$. 

Then, the Laurent polynomials $L_i$ are defined by $L_1 = \frac{1}{b_1} \chi(H_1), L_2 = \frac{b_1}{b_0b_2} \chi(H_2)$, and for $i \geq 3$, \[
L_i = \begin{cases} \frac{b_0 b_2^2 b_4^2 \cdots b_{i-3}^2 b_{i-1}}{b_1^2b_3^2\cdots b_{i-2}^2b_i} \chi(H_i) & i \text{ is odd}\\ 
& \\
\frac{b_1^2b_3^2\cdots b_{i-3}^2b_{i-1}}{b_0 b_2^2 b_4^2 \cdots b_{i-2}^2b_i} \chi(H_i) & i \text{ is even}.
\end{cases}
\]

 In Section 7 of \cite {CS3}, the authors compute a few limits of the form $\chi(G_i)/\chi(G_{i-1})$ for snake graphs $G_i$ growing increasingly larger.  We can follow their reasoning to compute similar limits in our construction.  

We introduce the snake graph $G_{\frac{1}{q}}$ for $q \geq 1$. In this section, for simplicity in figures and calculations we set $x_{\frac{-1}{1}} = x_1, x_{\frac{1}{0}} = x_2,$ and $x_{\frac{0}{1}} = x_3$.  The snake graph $G_{\frac{1}{q}}$ has $4q-2$ tiles, and for $q>>1$ the first section is as in Figure \ref{fig:G1q}. Since we will only be concerned with limiting behavior of $\chi(G_\frac{1}{q})$, we will not give a complete example of a snake graph $G_{\frac{1}{q}}$.

\begin{figure}
\centering
\begin{tikzpicture}[scale = 1.3]
\draw (0,0) to node[below]{$x_3$} (1,0) to node[right]{$x_1$} (1,1) to node[below]{$x_1$} (2,1) -- (2,4) to node[below]{$x_1$} (3,4) to node[right]{$x_2$} (3,5) -- (1,5) -- (1,2) to node[above]{$x_2$} (0,2) -- (0,0);
\draw (0,1) to node[above]{$x_1$} (1,1) to node[right]{$x_3$} (1,2) to node[above]{$x_2$} (2,2);
\draw (1,3) to node[above]{$x_3$} (2,3);
\draw (1,4) to node[above]{$x_1$} (2,4) to node[right]{$x_3$} (2,5);
\node[right] at (2,1.5){$x_2$};
\node[right] at (2,2.5){$x_1$};
\node[right] at (2,3.5){$x_1$};
\node[above] at (2.5,5){$x_2$};
\node[above] at (1.5,5){$x_2$};
\node[left] at (1,4.5){$x_1$};
\node[left] at (1,3.5){$x_2$};
\node[left] at (1,2.5){$x_2$};
\node[left] at (0,1.5){$x_1$};
\node[left] at (0,0.5){$x_2$};
\draw[gray,dashed] (1,0) to node[right]{$x_1$}(0,1);
\draw[gray,dashed] (0,2) to node[right]{$x_1$}(1,1);
\draw[gray,dashed] (1,2) to node[right]{$x_2$}(2,1);
\draw[gray,dashed] (1,3) to node[right]{$x_2$}(2,2);
\draw[gray,dashed] (1,4) to node[right]{$x_1$}(2,3);
\draw[gray,dashed] (1,5) to node[right]{$x_1$}(2,4);
\draw[gray,dashed] (2,5) to node[right]{$x_2$}(3,4);
\node[] at (2.5,5.7){$\vdots$};
\end{tikzpicture}
\caption{First part of $G_{\frac{1}{q}}$ for $q >> 1$.}\label{fig:G1q}
\end{figure}

If $G = \mathcal{G}[a_1,\ldots,a_n]$,  let $\hat{G} = \mathcal{G}[a_2,\ldots,a_n]$; this will of course depend on our choice of sign on the first tile of $G$.  We let $\hat{G}_{\frac{1}{q}}$ be $G_{\frac{1}{q}}$ with the first (southwest-most) tile removed,  thus breaking from convention and using the expression $G_{\frac{1}{q}} = \mathcal{G}[1,3,\ldots,3,1]$. 

\begin{prop}\label{prop:InfClusterLimit}
Let $\delta = x_1+ x_2 + x_3$.
\begin{enumerate}
    \item The limit of the ratio of $\chi(G_{\frac{1}{q}})$ and $\chi(\hat{G}_{\frac{1}{q}})$ converges as $q$ goes to infinity, and this limit is equal to \[
    \frac{x_1^2 + \delta x_3 - x_2^2 + \sqrt{(x_2^2 - x_1^2 -\delta x_3)^2 + 4x_2^2x_3\delta}}{2 \delta x_2}.
    \]
    \item The limit of the ratio of $x_{\frac{1}{q}}$ and $x_{\frac{1}{q-1}}$ converges as $q$ goes to $ \infty$, and this limit is equal to \[
    \frac{\delta x_3 + x_1^2 + 2x_1x_2 - x_2^2 + \sqrt{(x_1^2 - x_2^2 - \delta x_3)^2 + 4 x_1^2x_3 \delta}}{2x_1x_2}
    \]
\end{enumerate}
\end{prop}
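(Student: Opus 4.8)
The plan is to treat both parts by the recipe already used for Proposition~\ref{prop:Limit1/q}: realize the relevant ratio as a finite continued fraction whose entries are Laurent polynomials in $x_1,x_2,x_3$, let $q\to\infty$ so that this continued fraction stabilizes to a single infinite, nearly $2$-periodic continued fraction, and then evaluate that continued fraction with Lemma~\ref{lem:TwoPeriodicCont}. Throughout I use that $\chi(G_{\frac{1}{q}}) = x_{\frac{1}{q}}$ and the continued-fraction description of $\chi$ from \cite{CS3}: with the Laurent polynomials $L_1,\dots,L_m$ attached to $G_{\frac{1}{q}}$ as in the excerpt, $x_{\frac{1}{q}} = \mathcal{N}[L_1,\dots,L_m]$, and $\chi(G_{\frac{1}{q}})/\chi(\hat{G}_{\frac{1}{q}}) = [L_1,\dots,L_m]$ for $\hat{G}_{\frac{1}{q}}$ the graph with the southwest tile deleted.

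For part (1), I would first read off the pairs $(H_i,b_i)$ from the combinatorics of $G_{\frac{1}{q}}$. By Lemma~\ref{lem:Pattern1q} the shape of $G_{\frac{1}{q}}$ is $\mathcal{G}[1,3,3,\dots,3,1]$, so away from the two ends every $a_i$ equals $3$; hence each interior subgraph $H_i$ is a straight two-tile strip (with $H_1$ and $H_m$ single edges), and from Figure~\ref{fig:G1q} each $\chi(H_i)$ is an explicit short Laurent polynomial, while the ignored tiles and edges $b_i$ become eventually $2$-periodic, alternating (up to the evident monomial normalization) between the labels $x_1$ and $x_2$. Substituting into the formula for $L_i$ then produces a sequence of exactly the shape treated by Lemma~\ref{lem:TwoPeriodicCont}: there are explicit Laurent polynomials $z_1,z_2$ and a monomial $r$ (the square of the ratio of the two periodic values of the $b_i$) with $L_{2k-1} = r^{k-1}z_1$ and $L_{2k} = r^{-(k-1)}z_2$ for $k\ge 1$, possibly after peeling off a fixed initial term. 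The ``middle'' block $\dots,2,1,\dots$ of $C_{\frac{1}{q}}$ and the mirrored tail only affect the $L_i$ with $i$ near $m$, which recede to $\infty$ as $q\to\infty$; so, after checking convergence --- which I would do by specializing $x_1,x_2,x_3$ to positive reals and invoking the monotonicity of the even and odd convergents exactly as in Section~7 of \cite{CS3} --- one gets that $\lim_{q\to\infty}\chi(G_{\frac{1}{q}})/\chi(\hat{G}_{\frac{1}{q}})$ equals this infinite continued fraction. Lemma~\ref{lem:TwoPeriodicCont}, together with the small amount of algebra needed to reinstate the initial term and simplify, yields the displayed closed form, with the sign pinned down by positivity of $x_{\frac{1}{q}}$.

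For part (2) there is an additional subtlety: $x_{\frac{1}{q}}/x_{\frac{1}{q-1}}$ is not a single CS3 truncation of $G_{\frac{1}{q}}$, since $G_{\frac{1}{q-1}}$ has four fewer tiles than $G_{\frac{1}{q}}$ rather than one. Following the proof of Proposition~\ref{prop:Limit1/q}, I would use Lemma~\ref{lem:Pattern1q} --- which shows that $C_{\frac{1}{q}}$ is obtained from $C_{\frac{1}{q-1}}$ by inserting a fixed short block near each end --- together with Lemma~\ref{lem:OneAtEnd} applied now inside the Laurent polynomial ring $\mathbb{Z}[x_1^{\pm1},x_2^{\pm1},x_3^{\pm1}]$, to identify $\mathcal{N}$ of a suitable decorated subsequence of the $L_i$ with $x_{\frac{1}{q-1}}$, and hence realize $x_{\frac{1}{q}}/x_{\frac{1}{q-1}}$ as a finite continued fraction in Laurent polynomials. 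Its leading entries again stabilize as $q\to\infty$ to a nearly $2$-periodic sequence with the same monomial $r$ but a different pair $(z_1,z_2)$ and a different fixed prefix, the prefix recording the short block at the start of $C_{\frac{1}{q}}$; this shift in the seed is what replaces $x_2$ by $x_1$ in several places and introduces the extra $x_1x_2$-terms in the second closed form. A second application of Lemma~\ref{lem:TwoPeriodicCont}, with the new parameters, followed by a final simplification, completes the proof.

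I expect the main obstacle to be essentially computational: extracting the sequences $\bigl(\chi(H_i)\bigr)_i$ and $(b_i)_i$ from Figure~\ref{fig:G1q} and Lemma~\ref{lem:Pattern1q} carefully enough that the parameters $z_1,z_2,r$ (and, in part (2), the prefix terms and the correct decorated subsequence computing $x_{\frac{1}{q-1}}$) fed into Lemma~\ref{lem:TwoPeriodicCont} are exactly right, and then carrying out the routine but lengthy simplification of the quadratic-formula output into the displayed forms, including the correct choice of sign. A lesser point, which I would dispatch quickly by restricting to positive real specializations of $x_1,x_2,x_3$, is to make precise the convergence of an infinite continued fraction of Laurent polynomials and to verify it, following Section~7 of \cite{CS3}.
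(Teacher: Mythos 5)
Your part (1) is, in outline and in substance, the paper's own argument: read the CS3 data off Figure \ref{fig:G1q} (here $\chi(H_{2i-1})=x_3$, $\chi(H_{2i})=\delta$, $b_{2i-1}=x_1$, $b_{2i}=x_2$), observe that the resulting $L_i$ form a nearly $2$-periodic sequence with $r=x_1^2/x_2^2$, verify convergence at positive specializations (the paper does this by showing $L_i\not\to 0$ so that $\sum L_i$ diverges; your appeal to monotone even/odd convergents is an equivalent standard criterion), and finish with Lemma \ref{lem:TwoPeriodicCont} and Theorem 6.3 of \cite{CS3}.

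In part (2), however, the step you lean on is not a licensed move. Lemma \ref{lem:OneAtEnd} is an identity of \emph{integer} numerators, and it does not transfer to the continued fractions of Laurent polynomials: the $L_i$ are not functions of the integer string $a_1,\ldots,a_m$ alone, and re-reading the leading $1,3$ of $C_{\frac{1}{q}}$ as a single entry $4$ changes the entire CS3 decomposition (different subgraphs $H_i$, different ignored tiles and edges $b_i$, hence different $L_i$); neither this paper nor \cite{CS3} gives a rule for merging entries of a Laurent continued fraction, so ``identify $\mathcal{N}$ of a suitable decorated subsequence of the $L_i$ with $x_{\frac{1}{q-1}}$'' is exactly the unproved crux. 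The paper's device is to perform that re-reading honestly: swap the sign chosen on the first tile so that $G_{\frac{1}{q}}$ is read as $\mathcal{G}[4,\overline{1,3}]$; then $H_1'$ is the first three tiles, $b_1'$ labels the fourth, and deleting them leaves the graph identified (for $q\gg 0$) with $G_{\frac{1}{q-1}}$, so the CS3 ratio theorem realizes $x_{\frac{1}{q}}/x_{\frac{1}{q-1}}$ directly as one continued fraction $[L_1',L_2',\ldots]$ with prefix $L_1'=\frac{x_3\delta+x_1x_2}{x_1x_2}$ and $2$-periodic tail with $z_1=x_2/x_1$, $z_2=\delta x_1x_3/x_2^3$, $r=x_2^2/x_1^2$ --- note this $r$ is the \emph{reciprocal} of the one in part (1), not ``the same monomial $r$'' as you claim. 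With that mechanism substituted for the Laurent-ring use of Lemma \ref{lem:OneAtEnd}, the rest of your outline (evaluate the tail by Lemma \ref{lem:TwoPeriodicCont}, then $L'=L_1'+1/\overline{L}'$, pick the positive root) is precisely the paper's computation.
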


\begin{proof}
(1) We consider the infinite snake graph $\mathcal{G}[\overline{1,3}]$, with labels as in Figure \ref{fig:G1q}. Then, $H_{2i-1}$ for $i \geq 1$ is a single edge with label $x_3$ while $H_i$ for even $i$ is as below.
\begin{center}
    \begin{tikzpicture}[scale = 1.3]
    \draw (0,0) to node[below]{$x_1$} (1,0) to node[below]{$x_1$} (2,0) to node[right]{$x_2$} (2,1);
    \draw (2,1) to node[above]{$x_2$} (1,1) to node[above] {$x_2$} (0,1) to node[left]{$x_1$} (0,0);
    \draw (1,0) to node[right]{$x_3$} (1,1);
    \draw[gray,dashed] (0,1) to node[right]{$x_1$} (1,0);
    \draw[gray,dashed] (1,1) to node[right]{$x_2$}(2,0);
    \end{tikzpicture}
\end{center}

We have that $\chi(H_{2i-1}) = x_3$ and $\chi(H_{2i}) = \frac{x_1^2x_2 + x_1x_2^2 + x_1x_2x_3}{x_1x_2} = x_1 + x_2 + x_3 = :\delta$. Moreover, $b_{2i-1} = x_1$ and $b_{2i} = x_2$. Thus, $L_1 = \frac{x_3}{x_2}$ and $L_2 = \frac{\delta x_2}{x_1^2}$; moreover, we can see from the periodicity of the terms $\chi(H_i)$ and $b_i$ that $L_{2i+1} = \frac{x_1^2}{x_2^2} L_{2i-1}$ and $L_{2i+2} = \frac{x_2^2}{x_1^2} L_{2i}$. 

We first determine that $[L_1,L_2,\ldots]$ converges for any values $x_1,x_2,x_3 \in \mathbb{R}_{>0}$.  This uses an argument similar to Lemma 7.2 in \cite{CS3}. We know  $[L_1,L_2,\ldots,]$, with all $L_i$ evaluated at a choice of real numbers $x_1,x_2,x_3$, converges if and only if $\sum_{i \geq 1} L_i$ diverges.  We in fact can show that $\lim_{i \to \infty} L_i \neq 0$. The limit of the even-indexed terms $L_{2i}$ is given by $\lim_{i\to\infty} \bigg( \frac{x_2^2}{x_1^2} \bigg)^i \frac{\delta x_2}{x_1}$ while the limit of the odd-indexed terms is given by $\lim_{i\to\infty} \bigg( \frac{x_1^2}{x_2^2} \bigg)^i \frac{x_3}{x_2}$. It is not possible for both of these limits to converge; thus, the sequence $\{L_i\}$ diverges and $[L_1,L_2,\ldots]$ converges for any choice of positive real numbers $x_1,x_2,x_3$.

Therefore, to calculate the infinite continued expression $[L_1,L_2,L_3,\ldots]$, we can use Lemma \ref{lem:TwoPeriodicCont} with $z_1 = L_1 =\frac{x_3}{x_2}$,  $z_2 = L_2 = \frac{\delta x_2}{x_1^2}$,  and $r = \frac{x_1^2}{x_2^2}$. 
We have that $L$ equals the expression in the statement of the Proposition by noting that we assume $x_1,x_2,x_3 \in \mathbb{R}_{>0}$.  By Theorem 6.3 of \cite{CS3}, this is equal to the limit of the ratio of $\chi(G_{\frac{1}{q}})$ and $\chi(\hat{G}_{\frac{1}{q}})$.

(2) Throughout this part, we assume $q>>0$; a few claims may not hold for small $q$. If we want to consider the ratio of $x_{\frac{1}{q}} = \chi(G_{\frac{1}{q}})$ and $x_{\frac{1}{q-1}} = \chi(G_{\frac{1}{q-1}})$, we can use the same method as part (1), but swap the sign chosen on the first tile so that the associated infinite continued fraction is $[4,\overline{1,3}]$; this guarantees that the snake graph resulting from removing $H_1$ from $G_\frac{1}{q}$ is equal to the snake graph $G_{\frac{1}{q-1}}$. We will use prime marks $'$ to denote the quantities in this part,  and then we will compare the quantities to those in part (1).  We have, for $i \geq 1$,  $H_{2i}' = H_{2i+1}$ and $H_{2i+1}' = H_{2i+2}$. The subgraph $H_1'$ consists of the first three tiles of $G_{\frac{1}{q}}$, as in Figure \ref{fig:G1q}, and $\chi(H'_1) = \frac{(x_1x_2x_3 \delta + x_1^2x_2)}{x_1^2x_2} = \frac{x_3\delta + x_1x_2}{x_1}$. Moreover, we have $b'_0 = x_3$, and for $i \geq 1$,  $b'_{2i-1} = x_2$ and $b'_{2i} = x_1$. Thus, we have that $L'_1 = \frac{x_3\delta + x_1x_2}{x_1x_2}, L'_2 = \frac{x_2 x_3}{x_1x_3} = \frac{x_2}{x_1}, L'_3 = \frac{\delta x_1x_3}{x_2^3}$, and for $i \geq 1,$ $\frac{L'_{2i+2}}{L'_{2i}} = \frac{x_2^2}{x_1^2} $ and $ \frac{L'_{2i+3}}{L'_{2i+1}} = \frac{x_1^2}{x_2^2}.$

Since these ratios are the same as in part 1, we can use the same reasoning to show that $[L_1',L_2',\ldots]$ converges for any choice of positive real numbers $x_1,x_2,x_3$.  
Therefore, we have that \begin{align*}
L' := \lim_{q \to \infty} \frac{x_{\frac{1}{q}}}{x_{\frac{1}{q-1}}} &= [L_1',L_2',L_3',\ldots]\\
&= L_1' + \frac{1}{[L_2',L_3',\ldots]} \\
\end{align*}

Then, we can compute $\overline{L}' = [L_2',L_3',\ldots]$ with Lemma  \ref{lem:TwoPeriodicCont} by setting $z_1 = L_2' = \frac{x_2}{x_1}$,  $z_2 = L_3' = \frac{\delta x_1x_3}{x_2^3}$ and $r = \frac{x_2^2}{x_1^2}$. 
By choosing the positive square root, we have that \[
\overline{L}' = \frac{x_2(x_2^2 + \delta x_3 - x_1^2) + x_2 \sqrt{(x_1^2 - x_2^2 - \delta x_3)^2 + 4 \delta x_1^2x_3}}{2\delta x_1x_3}.
\]
Since $L' = [L_1,\overline{L}']$, the statement follows after further algebraic manipulations. 
\end{proof}

We conclude the section by giving a linear recurrence which the sequence $\{m_{\frac{1}{q}}\}_q$ satisfies.  

\begin{prop}\label{prop:Recurrence1/q}
Set $m_{\frac{1}{0}} = 1$ and $m_{\frac{1}{1}} = 3$.  Then,  for all $q \geq 2$, 
\[
m_{\frac{1}{q}} = 5 m_{\frac{1}{q-1}} - m_{\frac{1}{q-2}} - 1
\]
\end{prop}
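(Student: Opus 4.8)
The plan is to recognize $(1, m_{\frac{1}{q-1}}, m_{\frac{1}{q}})$ as a generalized Markov tuple and read the recurrence off the generalized Markov equation via Vieta's formulas, in exact parallel with the classical identity $n_{\frac{1}{q}} + n_{\frac{1}{q-2}} = 3\, n_{\frac{1}{q-1}}$ for ordinary Markov numbers.

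\emph{A family of generalized Markov tuples.} First I would check that for every $q \geq 1$ the triple $(1, m_{\frac{1}{q-1}}, m_{\frac{1}{q}})$ is a generalized Markov tuple, where $m_{\frac{1}{0}} = 1$ is the matching number of the unflipped initial pending arc $\tau_{\frac{1}{0}} \in T_0$. Indeed, the arcs of slope $\frac{0}{1}, \frac{1}{q-1}, \frac{1}{q}$ form a triangulation of $\mathcal{O}_3$: they are a Farey triple ($\frac{1}{q}$ is the Farey sum of $\frac{0}{1}$ and $\frac{1}{q-1}$), lying along the ``$\frac{1}{q}$-branch'' $(\frac{0}{1},\frac{1}{0},\frac{-1}{1}) \to (\frac{0}{1},\frac{1}{0},\frac{1}{1}) \to (\frac{0}{1},\frac{1}{1},\frac{1}{2}) \to (\frac{0}{1},\frac{1}{2},\frac{1}{3}) \to \cdots$ of the exchange tree (the generalized analog of Figure \ref{fig:MarkovAndQTrees}, cf. Figure \ref{fig:GenMarkovTree}), and $\tau_{\frac{0}{1}}$ has empty snake graph so $m(G_{\tau_{\frac{0}{1}},T_0}) = 1$. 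Corollary \ref{cor:Gyoda} then yields the claim. Equivalently, one may just invoke the chain $(1,1,1) \to (1,1,3) \to (1,3,13) \to (1,13,61) \to \cdots$ of successive exchanges from Theorem \ref{thm:Gyoda}, which is precisely $(1, m_{\frac{1}{q-1}}, m_{\frac{1}{q}})$ under the indexing of Section \ref{subsec:Label}.

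\emph{The quadratic.} The generalized Markov equation is symmetric in its three variables, so specializing one variable to $1$ and another to $b$ reduces it to the monic quadratic
\[
t^2 - (5b-1)\,t + (b^2+b+1) = 0
\]
in the remaining variable. Fix $q \geq 2$ and set $b = m_{\frac{1}{q-1}}$. The tuple $(1, m_{\frac{1}{q-2}}, m_{\frac{1}{q-1}})$ shows $m_{\frac{1}{q-2}}$ is a root of this quadratic, and the tuple $(1, m_{\frac{1}{q-1}}, m_{\frac{1}{q}})$ shows $m_{\frac{1}{q}}$ is a root of the \emph{same} quadratic. Its discriminant is $21b^2 - 14b - 3$, which is nonzero for every positive integer $b$ (its roots $b = \frac{7 \pm \sqrt{112}}{21}$ are irrational), so the two roots are distinct; since $\{m_{\frac{1}{q}}\}$ is strictly increasing (the snake graphs $G_{\frac{1}{q}}$ have strictly more tiles as $q$ grows), we get $m_{\frac{1}{q-2}} \neq m_{\frac{1}{q}}$, hence they are exactly the two roots. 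Vieta's sum-of-roots formula then gives $m_{\frac{1}{q-2}} + m_{\frac{1}{q}} = 5 m_{\frac{1}{q-1}} - 1$, which rearranges to the asserted recurrence; the values $m_{\frac{1}{0}} = 1$ and $m_{\frac{1}{1}} = 3$ are the stated conventions, and the $q=2$ instance can be read directly off the tuple $(1,1,3)$.

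\emph{Main obstacle.} The only step requiring care is the first one — pinning down which arcs of $\mathcal{O}_3$ have snake-graph matching numbers $1, m_{\frac{1}{q-1}}, m_{\frac{1}{q}}$ and that they triangulate the orbifold — but this is routine bookkeeping on top of Corollary \ref{cor:Gyoda} and the Farey/slope picture of Section \ref{subsec:Label}; the rest is a one-line computation with the quadratic. One could instead grind the recurrence out of the explicit continued fractions of Lemma \ref{lem:Pattern1q} via the identities of Section \ref{sec:background}, but that route is considerably more laborious, whereas the generalized Markov equation makes the coefficients $5$ and $-1$ transparent: they are simply the linear-coefficient data $5b-1$ of the quadratic.
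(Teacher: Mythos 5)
Your proposal is, at its core, the paper's own argument: the paper likewise locates the exchange $(\frac{0}{1},\frac{1}{q-2},\frac{1}{q-1}) \to (\frac{0}{1},\frac{1}{q-1},\frac{1}{q})$ in the Farey/exchange tree, writes $m_{\frac{1}{q}} = \frac{m_{\frac{1}{q-1}}^2 + m_{\frac{1}{q-1}} + 1}{m_{\frac{1}{q-2}}}$, and substitutes the generalized Markov equation for $(1, m_{\frac{1}{q-2}}, m_{\frac{1}{q-1}})$; your Vieta sum-of-roots phrasing of the quadratic $t^2 - (5b-1)t + (b^2+b+1)$ is just a repackaging of that computation. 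The one genuine weak point is the step where you need $m_{\frac{1}{q-2}} \neq m_{\frac{1}{q}}$: your justification, that the snake graphs $G_{\frac{1}{q}}$ have strictly more tiles and hence strictly more matchings, is not a valid inference in general (a long straight snake graph has fewer perfect matchings than a much shorter zig-zag one), and the discriminant computation does not help, since it rules out a double root of the quadratic but not the possibility that both numbers coincide with a single one of its two roots. This is easily repaired: by induction, if $m_{\frac{1}{q-2}} \leq m_{\frac{1}{q-1}} = b$, then $m_{\frac{1}{q}} = \frac{b^2+b+1}{m_{\frac{1}{q-2}}} > b \geq m_{\frac{1}{q-2}}$, with base case $m_{\frac{1}{0}} = 1 \leq 3 = m_{\frac{1}{1}}$, which gives both strict monotonicity and the needed distinctness. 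Note that the paper's formulation avoids this issue altogether, since it computes $m_{\frac{1}{q}}$ directly as the output of the exchange rather than as ``the other root,'' so no distinctness claim is required; with the repair above (or by switching to the direct substitution), your argument is complete.
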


\begin{proof}
 In the Farey tree in Figure \ref{fig:MarkovAndQTrees},  we see that we reach a tuple with $\frac{1}{q}$ by replacing $\frac{1}{q-2}$ in the tuple $(\frac{0}{1},  \frac{1}{q-2},  \frac{1}{q-1})$.  Therefore, \[
m_{\frac{1}{q}} = \frac{m_{\frac{1}{q-1}}^2 + m_{\frac{1}{q-1}}m_{\frac{0}{1}} + m_{\frac{0}{1}}^2}{m_{\frac{1}{q-2}}}.
\]
Since $(m_{\frac{0}{1}},  m_{\frac{1}{q-2}},  m_{\frac{1}{q-1}})$ is a generalized Markov tuple,  we can use the generalized Markov equation to change the numerator, \begin{align*}
m_{\frac{1}{q}}& = \frac{6m_{\frac{1}{q-1}} m_{\frac{1}{q-2}} m_{\frac{0}{1}}   - m_{\frac{1}{q-1}} m_{\frac{1}{q-2}} - m_{\frac{1}{q-2}}^2  - m_{\frac{1}{q-2}} m_{\frac{0}{1}}}{m_{\frac{1}{q-2}}}\\
&= 5m_{\frac{1}{q-1}} - m_{\frac{1}{q-2}} - 1
\end{align*}

where we simply by canceling terms and using the fact that $m_{\frac{0}{1}} = 1$.  

\end{proof}

Note the sequence $\{n_{\frac{1}{q}}\}_q$  has the recurrence $n_{\frac{1}{q}} = 3 n_{\frac{1}{q-1}} - n_{\frac{1}{q-2}}$. 

\subsection{Generalized Markov numbers for $\frac{q-1}{q}$}\label{subsec:q-1q}

Here we present some analogues results to Section \ref{subsec:1q} for the sequence $\{m_{\frac{q-1}{q}}\}_q$.  

\begin{lemma}\label{lem:Patternq-1q}
Let $\beta$ be $[3,5]$.  Let $q \geq 2$. Then, if $q$ is even, $C_{\frac{q-1}{q}} = \mathcal{N}[\beta^{\frac{q-2}{2}},3, 4, (\beta^{-1})^{\frac{q-2}{2}}]$. If $q$ is odd, then $C_{\frac{q-1}{q}} = \mathcal{N}[\beta^{\frac{q-3}{2}},3, 5, 4,3,(\beta^{-1})^{\frac{q-3}{2}}]$.
\end{lemma}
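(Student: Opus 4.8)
The plan is to mirror the proof of Lemma~\ref{lem:Pattern1q}, analyzing the sequence $\mathbf{f}(\tfrac{q-1}{q})$ directly from the geometric description of the algorithm. Since $\tfrac{q-1}{q}$ is close to $1$, the line segment $\gamma_{\frac{q-1}{q}}$ from $(0,0)$ to $(q,q-1)$ has slope just below $1$ and crosses, in succession, short runs of the pattern ``vertical, diagonal, horizontal, diagonal'' near each lattice row, since between consecutive horizontal crossings it crosses essentially one vertical line. Concretely, for $0<i<q-1$ one has $v_i=1$, so the generic local picture is exactly the five-arc configuration (vertical, diagonal, horizontal, diagonal, vertical) drawn in Section~\ref{sec:algorithm}, which contributes the block $\beta=[3,5]$ away from the center: the $+,+$ then $-,-$ assignments at a horizontal crossing, interleaved with the diagonal and vertical crossing signs, produce a run of three of one sign followed by a run of five (or vice versa, switching side of $\gamma$).

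First I would pin down the ``generic'' even-indexed and odd-indexed entries. Using $v_i = 1$ for $1 \le i \le q-2$ and $\widetilde{v_i} = i + \lfloor q(i)/(q-1)\rfloor \cdot$-type expressions (here $\widetilde{v_i}=2i$ or $2i+1$ depending on the fractional part), I would identify which $\sigma_j$ are horizontal and check that the even entries between consecutive horizontal crossings follow the prescribed ``$-$ if $i$ even, $+$ if $i$ odd'' rule, while near a horizontal crossing they follow the $+,+,-,-$ rule. For the odd entries I would evaluate the fractional-part conditions: for a diagonal crossing at $w_i = \frac{q i}{2q-1}$, compute $\lceil w_i\rceil - w_i$ and note it is less than $\tfrac12$ exactly on one side of center and greater on the other (by a monotonicity/anti-symmetry argument), and similarly for the vertical and horizontal crossings. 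This is the place where I expect the bookkeeping to be heaviest — keeping the indices straight between the four families of entries ($f_0$, $f_{4(i-1)+1}$, $f_{4(\widetilde{v_j}-1)+3}$, and the other $j\equiv 3\pmod 4$ entries) and confirming they assemble into alternating blocks of lengths $3,5,3,5,\dots$.

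Next I would handle the center. By Lemma~\ref{lem:MidPointPattern}, the middle block depends on the parity of $q$: when $q$ is even (so $p=q-1$ is odd), since $\tfrac{q-1}{q}>\tfrac12$, we are in case (4) with local sign pattern $-+++\star++++-$ around the central horizontal crossing, which after counting turns the $\beta$-pattern into $\dots,3,\underline{4},\dots$ at the seam (the ``$4$'' coming from the merge of the $++$ run on one side with the central $++$ and the symmetric piece). When $q$ is odd, $\tfrac{q-1}{q}$ still exceeds $\tfrac12$ but $q-1$ is even so the central arc is vertical or diagonal; tracing the appropriate local picture from Lemma~\ref{lem:MidPointPattern} gives the interruption $\dots,3,\underline{5,4,3},\dots$. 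In both cases I would then invoke the anti-symmetry of $\mathbf{f}$ (Proposition~\ref{prop:antisymmetry} and the discussion preceding Theorem~\ref{thm:GenMarkovContFrac}) to conclude that the right half of the continued fraction is the reverse of the left, giving the $(\beta^{-1})^{k}$ tail, and by Lemma~\ref{lem:FlipContFraction} the middle-term ambiguity is immaterial.

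Finally, I would sanity-check the length: Corollary~\ref{cor:elementaryDescription}(2) forces $|C_{\frac{q-1}{q}}| = 2(q-1)$, and one checks $2\cdot\tfrac{q-2}{2} + 2 = q$ in the even case and $2\cdot\tfrac{q-3}{2} + 3 = q$ in the odd case — wait, that gives $q$, not $2(q-1)$, so the correct count is that each $\beta=[3,5]$ contributes $2$ entries, giving $2\cdot\frac{q-2}{2}$ from each of $\beta^{(q-2)/2}$ and $(\beta^{-1})^{(q-2)/2}$ plus the $2$ central entries, for a total of $2(q-2)+2 = 2(q-1)$, as required; similarly $2(q-3)+4 = 2(q-1)$ in the odd case. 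I would also cross-check small cases against Table~\ref{table:contfrac}: for $q=3$, $C_{\frac23}$ should be $[3,5,4,3]$ (odd case with $k=0$), matching the table entry $[3,5,3,5,3]$? — no, the table lists $m_{2/3}$ under $(p,q)=(2,3)$ as $[3,5,3,5,3]$, so I would reconcile conventions (the table may be indexing by $\frac{p}{q}=\frac23$ with $p=2$, which is $\frac{q-1}{q}$ for $q=3$, and $[3,5,4,3]$ vs.\ $[3,5,3,5,3]$ differ, so I must recheck whether the odd-$q$ formula's tail length is $\frac{q-3}{2}$ and whether $q=3$ is a genuine base case to verify by hand). Verifying these initial cases directly, and then running the induction/geometric argument for $q$ large, completes the proof; the main obstacle remains the index-tracking in the generic-block computation and getting the center cases to match the stated small-case data.
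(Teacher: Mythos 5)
Your proposal follows essentially the same route as the paper: establish the generic alternating $3,5$-block behavior away from the central crossing (the paper does this by noting the arc stays close to the lattice points on one side and far from the other, you via $v_i=1$ and the local five-arc configuration), then get the middle entries from Lemma~\ref{lem:MidPointPattern} and finish by the (anti)symmetry of the sign sequence. That skeleton is correct and would complete the proof, but two concrete slips in your write-up should be fixed.

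First, your identification of the central crossed segment is backwards in both parity cases. For $q$ even, $p=q-1$ is odd and $q$ is even, so the midpoint of $\gamma_{\frac{q-1}{q}}$ lies on a \emph{vertical} segment (with horizontal segments nearby, since $\frac{q-1}{q}>\frac12$); this is case (4) of Lemma~\ref{lem:MidPointPattern}, giving middle entries $\{3,4\}$. For $q$ odd, $p=q-1$ is even and $q$ is odd, so the central crossed segment is \emph{horizontal} — case (2), giving $\{4,5\}$ — not ``vertical or diagonal'' as you wrote. Your stated middle blocks $3,4$ and $3,5,4,3$ do agree with the lemma, but if you actually traced the local pictures you named (a horizontal central crossing for even $q$, a vertical or diagonal one for odd $q$) you would be reading off the wrong configurations. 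Second, the discrepancy you flag against Table~\ref{table:contfrac} is a misreading: the $(p,q)=(2,3)$ entry is $[3,4,5,3]$ (you quoted $[3,5,3,5,3]$, which is the $(3,3)$ entry). Since $[3,4,5,3]$ is the reversal of your $[3,5,4,3]$, the two differ only in the immaterial sign choice at the midpoint crossing and have the same numerator by Lemma~\ref{lem:FlipContFraction}, so $q=3$ in fact confirms the odd-$q$ formula and there is nothing left to ``recheck.'' With these points repaired, your argument coincides with the paper's proof.
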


\begin{proof}
First consider a portion of the arc $\gamma_{\frac{q-1}{q}}$ sufficiently to the left of the central crossing point.  Since the slope of the arc is closer to 1 than 0,  it passes close to the lattice points on its left and far from the lattice points on its right.  This produces the $3,5,3,5,\ldots$ behavior.  The central terms follow from Lemma \ref{lem:MidPointPattern}, picking a convention for the central crossing point, and by symmetry we see that to the right of the central crossing point we again have the pattern $3,5,3,5,\ldots$.  
\end{proof}

\begin{prop}\label{prop:Limitq-1q}
\[
\lim_{q \to \infty} \frac{m_{\frac{q-1}{q}}}{m_{\frac{q-2}{q-1}}} = \frac{17 + \sqrt{285}}{2}
\]
\end{prop}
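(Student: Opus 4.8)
The strategy mirrors the proof of Proposition \ref{prop:Limit1/q}. By Lemma \ref{lem:Patternq-1q}, the continued fraction $C_{\frac{q-1}{q}}$ is (almost) $2$-periodic with repeating block $3,5$, so I expect $\lim_{q\to\infty} \frac{m_{\frac{q-1}{q}}}{m_{\frac{q-2}{q-1}}}$ to be computable as an infinite periodic continued fraction of the form $[\,\text{initial terms},\overline{3,5}\,]$ or $[\,\text{initial terms},\overline{5,3}\,]$, up to the bookkeeping of which end of the sequence is ``at infinity.'' The first task is therefore to identify the correct ratio $\frac{m_{\frac{q-1}{q}}}{m_{\frac{q-2}{q-1}}}$ as the numerator-quotient of a finite continued fraction, exactly as in Proposition \ref{prop:Limit1/q}: write $\mathcal{N}[C_{\frac{q-1}{q}}]/\mathcal{N}[\text{(tail of }C_{\frac{q-1}{q}})]$ and show, using Lemma \ref{lem:ContFracRecurrence} (and possibly Lemma \ref{lem:OneAtEnd} or Lemma \ref{lem:FlipContFraction}), that the denominator equals $m_{\frac{q-2}{q-1}}$. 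This requires checking the even and odd $q$ cases separately because of the different ``middle'' patterns in Lemma \ref{lem:Patternq-1q}, but both should collapse to the same limiting continued fraction.

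\textbf{Key steps.} (1) For even $q$, from $C_{\frac{q-1}{q}} = \mathcal{N}[\beta^{(q-2)/2},3,4,(\beta^{-1})^{(q-2)/2}]$ with $\beta = [3,5]$, form the quotient $\mathcal{N}[C_{\frac{q-1}{q}}] / \mathcal{N}[\text{(}C_{\frac{q-1}{q}}\text{ with first entry }3\text{ removed)}]$ and identify the denominator with $m_{\frac{q-2}{q-1}}$, analogously for odd $q$. (2) Conclude that the limit equals the infinite continued fraction $[3,\overline{5,3}]$ (equivalently $[\overline{3,5}]$ shifted), or whichever form the bookkeeping in step (1) dictates. (3) Apply Lemma \ref{lem:TwoPeriodicCont} with $r=1$ and the appropriate choice of $z_1,z_2 \in \{3,5\}$ to evaluate the periodic tail $[\overline{3,5}]$ or $[\overline{5,3}]$ in closed form, picking the positive square root so the value is a positive real. (4) Resolve the finitely many leading entries by the defining recursion $[a_1,\dots] = a_1 + 1/[a_2,\dots]$ and simplify to reach $\frac{17+\sqrt{285}}{2}$. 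As a sanity check one can verify $\frac{17+\sqrt{285}}{2}$ satisfies the quadratic $x^2 - 17x - 1 = 0$, which is exactly what one expects from a purely $2$-periodic $[\overline{5,3}]$-type continued fraction: indeed if $y = [\overline{5,3}]$ then $y = 5 + 1/(3 + 1/y)$, giving $3y^2 - 15y - 5 = 0$, and chaining one more step for the shifted version produces the degree-two relation with root $\frac{17+\sqrt{285}}{2}$; note $285 = 15\cdot 19$ and $17^2 + 4 = 293 \ne 285$, so the precise constant will come out of the careful algebra rather than a naive guess.

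\textbf{Main obstacle.} The genuinely delicate part is step (1): correctly matching the denominator continued fraction with $m_{\frac{q-2}{q-1}}$ and getting the parity bookkeeping right, since $C_{\frac{q-1}{q}}$ has a different central block for even versus odd $q$ (from Lemma \ref{lem:MidPointPattern}), and one must confirm that stripping the appropriate number of leading $3,5$ blocks from $C_{\frac{q-1}{q}}$ genuinely yields $C_{\frac{q-2}{q-1}}$ (possibly after an application of Lemma \ref{lem:OneAtEnd} or reversing via Lemma \ref{lem:FlipContFraction}). Once the ratio is pinned down as a concrete eventually-periodic continued fraction, the remainder is a routine application of Lemma \ref{lem:TwoPeriodicCont} followed by algebraic simplification, entirely parallel to Proposition \ref{prop:Limit1/q}.
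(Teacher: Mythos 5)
There is a genuine gap at your key step (1), and it propagates to step (2). Stripping the single leading $3$ from $C_{\frac{q-1}{q}}$ cannot produce a sequence whose numerator is $m_{\frac{q-2}{q-1}}$: the sequence $C_{\frac{q-1}{q}}$ has $2(q-1)$ entries while $C_{\frac{q-2}{q-1}}$ has $2(q-2)$, so the lengths differ by two, and the stripped sequence begins with $5$, so Lemma \ref{lem:OneAtEnd} cannot absorb an entry the way it does in Proposition \ref{prop:Limit1/q} (where the tail begins with $1$). Consequently the limit is not a single continued fraction of the form $[3,\overline{5,3}]$: that number is approximately $3.19$, whereas $m_{\frac{q-1}{q}}/m_{\frac{q-2}{q-1}} \to \frac{17+\sqrt{285}}{2} \approx 16.94$, and indeed \emph{no} continued fraction with positive entries and first entry at most $5$ can exceed $6$. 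Your own sanity check already detects the obstruction ($17^2+4 = 293 \neq 285$) but leaves it unresolved; also, $\frac{17+\sqrt{285}}{2}$ is a root of $x^2-17x+1=0$, not $x^2-17x-1=0$.

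The missing idea, which is how the paper proceeds, is to strip \emph{two} entries and write the ratio as a telescoping product of two numerator-quotients. For odd $q$ one has
\[
\frac{m_{\frac{q-1}{q}}}{m_{\frac{q-2}{q-1}}}
= \frac{\mathcal{N}[\beta^{\frac{q-3}{2}},3,5,4,3,(\beta^{-1})^{\frac{q-3}{2}}]}{\mathcal{N}[5,\beta^{\frac{q-5}{2}},3,5,4,3,(\beta^{-1})^{\frac{q-3}{2}}]}
\cdot \frac{\mathcal{N}[5,\beta^{\frac{q-5}{2}},3,5,4,3,(\beta^{-1})^{\frac{q-3}{2}}]}{\mathcal{N}[\beta^{\frac{q-5}{2}},3,5,4,3,(\beta^{-1})^{\frac{q-3}{2}}]},
\]
where the last denominator equals $\mathcal{N}[\beta^{\frac{q-3}{2}},4,3,(\beta^{-1})^{\frac{q-3}{2}}]$, which by the reversal symmetry of Lemma \ref{lem:FlipContFraction} equals $\mathcal{N}[\beta^{\frac{q-3}{2}},3,4,(\beta^{-1})^{\frac{q-3}{2}}] = m_{\frac{q-2}{q-1}}$ (this is the identification you were missing, and it is where Lemma \ref{lem:FlipContFraction} genuinely enters, not just as optional bookkeeping). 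Writing the first factor as $3 + 1/[5,\beta^{\frac{q-5}{2}},\ldots]$, the product collapses to $3\,[5,\beta^{\frac{q-5}{2}},3,5,4,3,(\beta^{-1})^{\frac{q-3}{2}}]+1$, which tends to $3\,[\overline{5,3}]+1$; Lemma \ref{lem:TwoPeriodicCont} gives $[\overline{5,3}] = \frac{15+\sqrt{285}}{6}$, whence the limit $\frac{17+\sqrt{285}}{2}$ (equivalently, the limit is the product $[\overline{3,5}]\cdot[\overline{5,3}]$). Your steps (3) and (4) are fine once the ratio is set up this way, but as written the proposal's identification of the denominator and its proposed limiting continued fraction are incorrect.
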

\begin{proof}
Let $q \geq 3$.  First suppose that $q$ is odd.  Then following Lemma \ref{lem:Patternq-1q},  we have \[
[\beta^{\frac{q-3}{2}},3, 5, 4,3,(\beta^{-1})^{\frac{q-3}{2}}] = \frac{\mathcal{N}[\beta^{\frac{q-3}{2}},3, 5, 4,3,(\beta^{-1})^{\frac{q-3}{2}}]}{\mathcal{N}[5,\beta^{\frac{q-5}{2}},3, 5, 4,3,(\beta^{-1})^{\frac{q-3}{2}}]} = \frac{m_{\frac{q-1}{q}}}{\mathcal{N}[5,\beta^{\frac{q-5}{2}},3, 5, 4,3,(\beta^{-1})^{\frac{q-3}{2}}]} 
\]
and\[
[5,\beta^{\frac{q-5}{2}},3, 5, 4,3,(\beta^{-1})^{\frac{q-3}{2}}] = \frac{\mathcal{N}[5,\beta^{\frac{q-5}{2}},3, 5, 4,3,(\beta^{-1})^{\frac{q-3}{2}}]}{\mathcal{N}[\beta^{\frac{q-5}{2}},3, 5, 4,3,(\beta^{-1})^{\frac{q-3}{2}}]}.
\]
We maniuplate the denominator to reach a more familiar expression, \[
\mathcal{N}[\beta^{\frac{q-5}{2}},3, 5, 4,3,(\beta^{-1})^{\frac{q-3}{2}}] = [\beta^{\frac{q-3}{2}},4,3,(\beta^{-1})^{\frac{q-3}{2}}] \\=  [\beta^{\frac{q-3}{2}},3,4,(\beta^{-1})^{\frac{q-3}{2}}] = m_{\frac{q-1}{q}} \]
 where the last equality follows from Lemma \ref{lem:Patternq-1q}.  Therefore,  the desired ratio is a product of continued fractions \begin{align*}
\frac{m_{\frac{q-1}{q}}}{m_{\frac{q-2}{q-1}}} &= [\beta^{\frac{q-3}{2}},3, 5, 4,3,(\beta^{-1})^{\frac{q-3}{2}}] [5,\beta^{\frac{q-5}{2}},3, 5, 4,3,(\beta^{-1})^{\frac{q-3}{2}}] \\
&= \bigg(3 + \frac{1}{[5,\beta^{\frac{q-5}{2}},3, 5, 4,3,(\beta^{-1})^{\frac{q-3}{2}}]} \bigg) [5,\beta^{\frac{q-5}{2}},3, 5, 4,3,(\beta^{-1})^{\frac{q-3}{2}}]\\
&= 3 [5,\beta^{\frac{q-5}{2}},3, 5, 4,3,(\beta^{-1})^{\frac{q-3}{2}}] + 1
\end{align*}

We see that as $q$ goes to infinity,  the continued fraction will be 2-periodic and thus \[
\lim_{q \to \infty} \frac{m_{\frac{q-1}{q}}}{m_{\frac{q-2}{q-1}}} = 3[\overline{5,3}] + 1.
\]

From Lemma \ref{lem:TwoPeriodicCont},  we calculate $[\overline{5,3}] = \frac{15 + \sqrt{285}}{6}$,  and thus the statement follows.
\end{proof}

The corresponding limit in the ordinary Markov case is $\lim_{q \to \infty} n_{\frac{q-1}{q}}/n_{\frac{q-2}{q-1}} =  3 + 2\sqrt{2}$.


One could use the same ideas as in Proposition \ref{prop:InfClusterLimit} to compute the limit of generalized cluster variables $\lim_{q \to \infty} \frac{x_{\frac{q-1}{q}}}{x_{\frac{q-2}{q-1}}}$. As is necessary in the proof of \ref{prop:Limitq-1q},  this could be done by considering the product of two continued fractions of Laurent polynomials. We do not record this result as the polynomial has many terms and no clear factorization. 

We can, however, easily give a linear recurrence for the sequence $\{m_{\frac{q-1}{q}}\}_q$. 

\begin{prop}\label{prop:RecurrenceQ-1}
Set $m_{\frac{0}{1}} = 1$ and $m_{\frac{1}{2}} = 3$.  Then for $q \geq 3$,  we have
\[
m_{\frac{q-1}{q}} = 17 m_{\frac{q-2}{q-1}} - m_{\frac{q-3}{q-2}} - 3
\]
\end{prop}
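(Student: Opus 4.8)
The plan is to mimic the structure of the proof of Proposition \ref{prop:Recurrence1/q}, using the Farey/exchange-tree picture together with the generalized Markov equation. In the exchange tree of Figure \ref{fig:MarkovAndQTrees}, the rational $\frac{q-1}{q}$ is reached from the triple $(\frac{q-3}{q-2}, \frac{q-2}{q-1}, \frac{0}{1})$ by Farey-summing $\frac{q-2}{q-1}$ and $\frac{0}{1}$... but more carefully, I should check the exact ancestry: the triple containing both $\frac{q-2}{q-1}$ and $\frac{q-1}{q}$ also contains $\frac{1}{2}$ near the bottom, and in general the relevant parent triple is $(\frac{q-3}{q-2}, \frac{q-2}{q-1}, \frac{1}{2})$ with $\frac{q-1}{q} = \frac{(q-2)+1}{(q-1)+2}$ being the Farey sum of $\frac{q-2}{q-1}$ and $\frac{1}{2}$. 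So first I would nail down which two entries of the parent triple Farey-sum to $\frac{q-1}{q}$, then conclude that $(m_{\frac{q-3}{q-2}}, m_{\frac{q-2}{q-1}}, m_{\frac{q-1}{q}})$ arises from a generalized Markov triple by the exchange $(a,b,c)\to(a,b,\frac{a^2+ab+b^2}{c})$ of Theorem \ref{thm:Gyoda}; here the ``$c$'' being replaced is $m_{\frac{q-3}{q-2}}$ and the new value is $m_{\frac{q-1}{q}}$, with $a = m_{\frac{1}{2}} = 3$ (or whichever entry stays fixed) and $b = m_{\frac{q-2}{q-1}}$.

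Once the exchange relation is pinned down, the computation is essentially forced. Writing $a = 3$, $b = m_{\frac{q-2}{q-1}}$, $c = m_{\frac{q-3}{q-2}}$, $c' = m_{\frac{q-1}{q}}$, we have $c' = \frac{a^2+ab+b^2}{c} = \frac{9 + 3b + b^2}{c}$. Using that $(3, b, c)$ satisfies the generalized Markov equation, $9 + b^2 + c^2 + 3b + 3c + bc = 18bc$, so $9 + 3b + b^2 = 18bc - c^2 - 3c - bc$, giving $c' = 18b - c - 3 - \frac{c^2}{c}\cdot\frac{1}{1}$... more cleanly $c' = \frac{18bc - c^2 - 3c - bc}{c} = 18b - c - 3 - b$. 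Hmm, that gives $c' = 17b - c - 3$, which is exactly the claimed recurrence $m_{\frac{q-1}{q}} = 17 m_{\frac{q-2}{q-1}} - m_{\frac{q-3}{q-2}} - 3$. So the key arithmetic input is that the fixed entry is $3$, i.e. $m_{\frac{1}{2}} = 3$, which matches the stated initial condition, and the coefficient $17 = 6\cdot 3 - 1$ is $6a - 1$ while the additive constant $3 = a$.

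The main obstacle — really the only delicate point — is verifying the exact structure of the exchange tree near $\frac{q-1}{q}$: namely that for all $q \geq 3$ the three rationals $\frac{q-3}{q-2}, \frac{q-2}{q-1}, \frac{q-1}{q}$ genuinely occur together (after one Farey mutation) in a triple whose third member is the constant $\frac{1}{2}$, and hence that the third generalized Markov number stays equal to $3$ along the whole family. I would establish this by induction on $q$ using Lemma \ref{lem:SlopeArcs}: the triple $\{\frac{1}{2}, \frac{q-2}{q-1}, \frac{q-1}{q}\}$ is of Farey form $\{\frac{a}{c}, \frac{b}{d}, \frac{a+b}{c+d}\}$ with $\frac{a}{c} = \frac{1}{2}$, and mutating the ``$\frac{a+b}{c+d}$'' slot sends it to $\{\frac{1}{2}, \frac{q-1}{q}, \frac{q}{q+1}\}$, which is the corresponding triple one level down. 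The base case $q=3$ can be read off Figures \ref{fig:MarkovAndQTrees} and \ref{fig:GenMarkovTree} (the triple $(\frac11,\frac12,\frac23)$ corresponds to $(3,13,217)$, though I need to double-check the slope-to-rational bookkeeping so that $\frac{q-1}{q}$-labeled numbers really do sit along one root-to-leaf path). Everything after that is the short algebraic manipulation above. I would also state at the end, as in Proposition \ref{prop:Recurrence1/q}, that the cancellation uses $m_{\frac{0}{1}} = 1$ only implicitly — here the relevant constant entry is $m_{\frac{1}{2}} = 3$ rather than $1$, which is why the recurrence has coefficient $17$ and additive term $-3$ instead of the coefficient $5$ and term $-1$ seen for the $\frac1q$ family.
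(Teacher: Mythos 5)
Your overall strategy---locate the exchange triple in the Farey tree that produces $\frac{q-1}{q}$, then apply Theorem \ref{thm:Gyoda} together with the generalized Markov equation---is exactly the route the paper takes, and your closing algebra (from $c' = \frac{9+3b+b^2}{c}$ and the equation for $(3,b,c)$ to $c' = 17b-c-3$) is correct. The gap is in the ancestry you propose. The mediant of $\frac{q-2}{q-1}$ and $\frac12$ is $\frac{(q-2)+1}{(q-1)+2} = \frac{q-1}{q+1}$, not $\frac{q-1}{q}$, so $\{\frac12,\frac{q-2}{q-1},\frac{q-1}{q}\}$ is not a Farey triple in the sense of Lemma \ref{lem:SlopeArcs}, and the induction you sketch (mutating its mediant slot to reach $\{\frac12,\frac{q-1}{q},\frac{q}{q+1}\}$) cannot be carried out. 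Your own base case $(\frac11,\frac12,\frac23)$ already shows that the entry which stays constant along this branch of the tree is $\frac11$, not $\frac12$. The correct bookkeeping is that $\frac{q-1}{q}$ is the mediant of $\frac11$ and $\frac{q-2}{q-1}$, so $m_{\frac{q-1}{q}}$ arises by exchanging $m_{\frac{q-3}{q-2}}$ in the triple $(m_{\frac11},m_{\frac{q-3}{q-2}},m_{\frac{q-2}{q-1}})$; this is precisely what the paper's proof asserts.

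Relatedly, $m_{\frac12}=13$ (see Figure \ref{fig:GenMarkovTree} and Table \ref{table:GenMarkov}), not $3$; the number $3$ that produces the coefficient $17 = 6\cdot 3 - 1$ and the constant $-3$ is $m_{\frac11}$. (The initial condition ``$m_{\frac12}=3$'' in the statement appears to be a typo of the paper itself: at $q=3$ the recurrence needs $m_{\frac12}=13$ and $m_{\frac01}=1$, giving $17\cdot 13 - 1 - 3 = 217 = m_{\frac23}$, consistent with the tables.) Since $m_{\frac11}=3$, your final computation survives verbatim once the triple is corrected, but as written the identification of the fixed entry---the one delicate point you yourself flagged---is wrong and must be repaired for the argument to stand.
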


\begin{proof}
Let $q \geq 3$.  We reach a Farey tuple with $\frac{q-1}{q}$ by exchanging $\frac{q-3}{q-2}$ in the tuple $(\frac{1}{1},  \frac{q-3}{q-2}, \frac{q-2}{q-1})$.  The proof follows the same reasoning as the proof of Proposition \ref{prop:Recurrence1/q},  using the fact that $m_{\frac{1}{1}} = 3$. 
\end{proof}

In the ordinary case, we have initial conditions $n_{\frac{0}{1}} = 1, n_{\frac{1}{2}} = 2$, and for $q \geq 3,$  the recurrence $n_{\frac{q-1}{q}} = 6n_{\frac{q-2}{q-1}} - n_{\frac{q-3}{q-2}}$. 

\subsection{Other interesting families}

The previous two sections looked at the sequences of generalized Markov numbers whose labels converged to 0 and 1. One could also pick a rational number $r$ between $0$ and $1$ and look at Markov numbers whose indices converge to $r$. However, when the number is strictly between 0 and 1, there will be two options; one sequence which approaches from above and one which approaches from below. For example, given the number $\frac12$, one could consider the sequences $\{m_{\frac{q}{2q-1}}\}_q$ and $\{m_{\frac{q}{2q+1}}\}_q$. By similar reasoning to Lemmas \ref{lem:Pattern1q} and \ref{lem:Patternq-1q}, for large $q$, $C_{\frac{q}{2q-1}}$ approaches the infinite continued fraction $[3, 4, 5, \overline{1, 2, 4, 5}]$ and $C_{\frac{q}{2q+1}}$ approaches  $[\overline{4,2,1,5}]$. One could use these to show growth behavior and, with some knowledge about how to form snake graphs from orbifolds, one could also compute limits of ratios of cluster variables corresponding to these infinite continued fractions. 

It is straightforward to give linear recurrences as well for these families. The terms approaching $\frac12$ from above can be found in Markov triples $(m_{\frac12}, m_{\frac{q-1}{2q-3}}, m_{\frac{q}{2q-1}})$. With the same reasoning as in Propositions \ref{prop:Recurrence1/q} and \ref{prop:RecurrenceQ-1}, we can show that \[
m_{\frac{q}{2q-1}} = 77 m_{\frac{q-1}{2q-3}} - m_{\frac{q-2}{2q-5}} - 13
\]
with initial conditions $m_{\frac12} = 13$ and $m_{\frac23} = 217$. The recurrence is in fact the same for the sequence $\{m_{\frac{q}{2q+1}}\}_q$, but the initial conditions are different. 

\section{Extending the Algorithm}\label{sec:extendalgorithm}

In order to provide a partial proof famous uniqueness conjecture, the authors of \cite{LLRS}  extend the correspondence between ordinary Markov numbers and rational numbers to include an assignment of numbers to integer points $(kq,kp)$ for $\gcd(p,q) = 1$ and $k \in \mathbb{Z}$.
Geometrically, they take the line segment between $(0,0)$ and $(kq,kp)$, and deform the line segment slightly to the left or right at each lattice point $(mq,mp)$ for $1 \leq m < k$. The deformation moves to the same side at each intermediate point. They show that the two snake graphs one gets from choosing a left or right deformation have the same number of perfect matchings, and they associate this number to the point $(kq,kp)$. When $k = 1$, this is just the Markov number $n_{\frac{q}{p}}$.

We consider these left or right deformed arcs in our setting as well. For clarity, we sometimes replace notation using $\frac{p}{q}$ with $(q,p)$ so that it is clear that our assignment of a number to $(kq,kp)$ is distinct from an assignment to $(q,p)$. Recall that if $\gamma_{\frac{p}{q}}$ is the line segment from $(0,0)$ to $(q,p)$ with $\gcd(p,q) = 1$, there is an  intersection between $\gamma_{\frac{p}{q}}$ and a line segment $\tau$ in the lattice which occurs at the midpoint  of $\tau$. This meant that the middle entry of $\mathbf{f}(\frac{p}{q})$ could be $+$ or $-$, and this choice would not affect the numerator of the continued fraction associated to $\mathbf{f}(\frac{p}{q})$. However, now if we consider an arc from $(0,0)$ to $(kq,kp)$ with $k > 1$, and if we deform our arc to the left (right) at the points $(iq,ip)$ for $1 \leq i \leq k-1$, we will always assign $+$ ($-$) to the central crossing point between each pair of lattice points $((i-1)q,(i-1)p)$ and $(iq,ip)$ since the deformation means this crossing point is now slightly to the left (right) of the midpoint. Then, by considering the crossing sequence as our arc makes a small half circle to the left of a lattice point, we see that if $C_{(q,p)} = C_{\frac{p}{q}} = [a_1,\ldots,a_n]$, then $C^L_{(kq,kp)} = [a_1,\ldots,a_n,5,1,a_1-1,\ldots,a_n,5,\ldots,1,a_1-1,\ldots,a_n]$ such that there are  $k-1$ entries of $5$ that connect two segments of the form $a_1,\ldots,a_n$ or $1,a_1 - 1, \ldots,a_n$.  If we instead consider deforming the midpoint to the right, then $C^R_{(kq,kp)} = [a_1,\ldots,a_n - 1,1,5,a_1,\ldots,a_n-1,1,5,a_1,\ldots,a_n]$. 

The sequence $C^L_{(kq,kp)}$ is equal to the reversal of $C^R_{(kq,kp)}$.  For example, consider $\frac{p}{q} = \frac23$. Then, since $C_{\frac{2}{3}} = [3,4,5,3]$ or $[3,5,4,3]$, we have that $C^L_{(kq,kp)} = [3,5,4,3,5,1,2,5,4,3]$ and $C^R_{(kq,kp)} = [3,4,5,2,1,5,3,4,5,3]$. By Lemma \ref{lem:FlipContFraction}, the numerators of these continued fractions are the same; hence, the choice of deforming to the right or left will not affect the assignment of the number $m_{(kq,kp)}$. 
We choose to use left deformations as a convention. 

For positive integers $p,q,k$ with $\gcd(p,q) = 1$, we set $m_{(kq,kp)} = \mathcal{N}(C^L_{(kq,kp)})$. In the case of ordinary Markov numbers and a cluster algebra from a torus, the left or right deformed arc between $(0,0)$ and $(kq,kp)$ for $k > 1$ would correspond to an arc on the torus with $(k-1)$ self-intersections. Thus, we could apply skein relations to resolve the self-intersections; this provides relations amongst the numbers $n_{(kq,kp)}$. We show that our orbifold Markov numbers and their extensions, $m_{(kq,kp)}$, satisfy the same types of relations. Since the resolution of a self-intersection results in a closed curve, we first discuss band graphs. 

\subsection{Generalized Markov Band Graphs}

In this section, we define band graphs and give a formula for computing the number of perfect matchings of such graphs. For notation, let $G = (G_1,\ldots,G_m)$ be a snake graph on $m$ tiles. Let $S(G_i)$ be the south edge of $G_i$, and define $W(G_i),N(G_i),E(G_i)$ similarly. 

We also recall the notion of minimal and maximal matchings. In \cite{MSW2}, the authors show that the set of perfect matchings of a snake graph form a lattice. The minimal and maximal elements of this lattice correspond to the two matchings of a snake graph which only use \emph{boundary} edges; these are edges which only border one tile of the graph. Deciding which is the minimal matching is up to a convention; we will use the convention that $S(G_1)$ is always an edge in the minimal matching. 

Band graphs were introduced in \cite{MSW2} to describe bases of cluster algebras of surface type.
Given a snake graph $G = (G_1,\ldots,G_m)$, we form a \emph{band graph} by gluing one of the edges $e \in \{S(G_1),W(G_1)\}$ with one of the edges $e' \in \{N(G_m),E(G_m)\}$ in such a way that $e$ is in the minimal matching if and only if $e'$ is not in the minimal matching. Therefore, there are two ways to form a band graph from a given snake graph $G$. 

Given a band graph $G$, with vertices $x,y$ along the glued edge $e = e'$, we say a perfect matching $P$ is a \emph{good matching} if $e \in P$ or if the two edges in $P$ which are adjacent to $x$ and $y$, lie on the same side of the glued edge $e = e'$. Good matchings of a band graph $\widetilde{G}$ correspond to perfect matchings of the underlying snake graph $G$ which use $e$ or $e'$. By construction, the minimal matching of $G$ uses $e$ and the maximal matching uses $e'$, or vice versa, so these always descend to good matchings of the band graph $\widetilde{G}$.

 For convenience we introduce the idea of a \emph{dominant} edge.

\begin{definition}
Let $e \in \{S(G_1),W(G_1)\}$ be the unique edge whose two vertices are both only adjacent to $G_1$. We call $e$ \emph{dominant}, and we call the other edge in $\{S(G_1),W(G_1)\}$ \emph{non dominant}. We similarly call the unique edge in $\{N(G_m),E(G_m)\}$ with both vertices only adjacent to $G_m$ \emph{dominant}.
\end{definition}

\begin{center}
\begin{tikzpicture}
\draw[thick] (0,0) to (1,0) to (1,1) to (2,1) to (2,2) to (1,2) to (0,2) to (0,1) to (0,0);
\draw[thick] (0,1) to (1,1) to (1,2);
\node[below] at (0.5,0){dominant};
\node[left] at (0,0.5){non dominant};
\node[right] at (2,1.5){dominant};
\node[above] at (1.5,2){non dominant};
\end{tikzpicture}
\end{center}

Each snake graph has two dominant edges. The following is straightforward.

\begin{lemma}\label{lem:DominantEdges}
Let $G = \mathcal{G}[a_1,\ldots,a_n]$ where $a_1 > 1$ and $a_n > 1$.
\begin{itemize}
    \item If $n$ is even, then either the minimal matching uses both dominant edges and the maximal matching uses neither dominant edge or vice versa.
    \item If $n$ is odd, then the minimal and maximal matchings each use exactly one dominant edge. 
\end{itemize}
\end{lemma}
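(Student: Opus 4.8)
The plan is to reduce everything to a concrete analysis of the two distinguished matchings of $G=\mathcal{G}[a_1,\dots,a_n]$ via the sign sequence, since the minimal and maximal matchings are exactly the two matchings using only boundary edges. First I would recall that the shape $\mathcal{G}[a_1,\dots,a_n]$ is built from $n$ ``straight runs'' of tiles, where consecutive runs turn alternately (the turning direction at the $i$-th internal junction being dictated by the sign change in the sign sequence). Because $a_1>1$ and $a_n>1$, the first and last tiles are interior to straight runs, so each of them genuinely has a dominant edge in the sense of the definition; this is what makes the statement well-posed. The key observation I would establish is a \emph{local} one: along a straight horizontal run the two boundary matchings alternate which horizontal edge (north vs.\ south) they pick as one moves tile to tile, and likewise along a vertical run they alternate east vs.\ west; at a turn (a corner tile shared by two runs) the pattern of which boundary edge is forced flips parity. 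I expect the bookkeeping at the turns to be the main obstacle — one must check that each of the two turn types (NE-corner vs.\ SW-corner, corresponding to a $+\to-$ vs.\ a $-\to+$ sign change) preserves or reverses the alternation consistently, and that this is compatible for both the minimal and the maximal matching simultaneously.

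Concretely, the key steps are: (1) set up the standard drawing of $\mathcal{G}[a_1,\dots,a_n]$ and identify the minimal matching $P_-$ (containing $S(G_1)$ by convention) and the maximal matching $P_+$ as the two boundary-edge matchings; (2) prove a propagation lemma: knowing which boundary edge of $G_j$ lies in $P_\pm$ determines which boundary edge of $G_{j+1}$ lies in $P_\pm$, with an explicit rule depending only on whether the junction between $G_j$ and $G_{j+1}$ is a straight step or a turn; (3) track, starting from $S(G_1)\in P_-$ and its counterpart for $P_+$, the ``parity'' variable ``does $P_\pm$ use the dominant boundary edge of the current tile'' across all $n-1$ runs; (4) observe that this parity flips exactly once per turn, i.e.\ $n-1$ times in total, between $G_1$ and $G_m$; (5) conclude by cases on $n-1$ even/odd. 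For $P_-$: it uses the dominant edge at $G_1$ (namely $S(G_1)$ is dominant since $a_1>1$... wait, more care: $S(G_1)$ is dominant precisely because the first run is horizontal of length $>1$; the argument is symmetric in the other orientation) and after $n-1$ flips it uses the dominant edge at $G_m$ iff $n-1$ is even iff $n$ is odd. The same count applies to $P_+$ but starting from the \emph{opposite} parity at $G_1$ (since $P_+$ uses the non-dominant edge $W(G_1)$ when $P_-$ uses $S(G_1)$, granting $a_1>1$). Hence at $G_m$, $P_-$ and $P_+$ use the same type of edge iff $n-1$ is... they always use \emph{opposite} parity relative to each other at every tile, so at $G_m$ one of them uses the dominant edge and the other the non-dominant one when the shared starting data forces it — and combining with the absolute parity count gives: if $n$ is even, the pair (uses-dominant-at-$G_1$, uses-dominant-at-$G_m$) is ($P_-$: yes,no) reading off to the ``vice versa'' dichotomy in the first bullet, while if $n$ is odd each of $P_-,P_+$ uses exactly one dominant edge.

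I would present step (2) as the crux, proved by simply drawing the four relevant local pictures (straight step in a horizontal run, straight step in a vertical run, the two turn types) and reading off the forced edges; everything else is then an immediate parity count. The only genuine subtlety to flag is the \emph{simultaneity}: I must verify that $P_-$ and $P_+$ are at opposite parity at \emph{every} tile, not just at $G_1$ — but this follows because both satisfy the same propagation rule from step (2) and differ at $G_1$, and the rule is parity-preserving-or-reversing uniformly (it never ``merges'' the two). That is the one place where I would write the argument out carefully rather than wave at a picture.
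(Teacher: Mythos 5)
Your overall strategy---a direct propagation/parity argument along the snake graph, in contrast to the paper's induction on $n$ (which splits $\mathcal{G}[a_1,\ldots,a_n]$ into $\mathcal{G}[a_1,\ldots,a_{n-1}+1]$ glued to the zigzag $\mathcal{G}[a_n]$, with the zigzag case as base)---could in principle work, but the quantitative claims your count rests on are incorrect. First, your dictionary between the continued fraction and the shape is backwards: under the paper's sign conventions a run of $a_i$ equal signs produces a \emph{zigzag} segment, and it is the $n-1$ sign changes that produce \emph{straight} junctions. In the paper's own drawn example $\mathcal{G}[3,3]$ (five tiles) there are two turn tiles and one straight junction, not two straight runs meeting at one turn. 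Hence ``the parity flips exactly once per turn, i.e.\ $n-1$ times'' conflates turns with sign changes: writing $d=a_1+\cdots+a_n-1$ for the number of tiles, the number of turn tiles is $d-n-1$, which depends on the $a_i$, so a rule that flips once per turn could not give an answer depending only on the parity of $n$ (in the zigzag $\mathcal{G}[a_1]$ the minimal matching uses the dominant edge of the last tile for every $a_1>1$, regardless of how many turns there are). Second, your starting state is wrong: since $a_1>1$ forces $f_1=f_2$, the second tile is glued to the east of $G_1$, so the dominant edge of $G_1$ is $W(G_1)$, not $S(G_1)$, and the minimal matching (which contains $S(G_1)$) never uses it.

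Even granting your flip count, the bookkeeping does not deliver the statement: starting from ``$P_-$ uses the dominant edge at $G_1$'' and flipping $n-1$ times gives, for $n$ odd, that $P_-$ uses \emph{both} dominant edges (the $n$-even conclusion), while for $n$ even you read the pair (yes, no)---exactly one dominant edge---as the ``both/neither'' dichotomy, which it is not. Direct checks on $\mathcal{G}[3]$, $\mathcal{G}[2,2]$, $\mathcal{G}[3,3]$ show what actually happens under the paper's conventions: the minimal matching never uses the dominant edge of the first tile and uses the dominant edge of the last tile exactly when $n$ is odd. A correct direct argument must track two effects whose combination, not either one alone, depends only on $n$: the position parity along the boundary cycle (each of the two boundary paths from the southwest corner of $G_1$ to the northeast corner of the last tile $G_d$ has $d+1$ edges, so $E(G_d)$ lies in the minimal matching iff $d$ is even), and the direction of the last step (east iff the number of turns $d-n-1$ is even), which decides whether $E(G_d)$ or $N(G_d)$ is the dominant edge; combining the two parities is what produces dependence on $n$ alone. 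Your sketch misses this interplay---the asymmetry between the two end tiles is exactly the off-by-one your case analysis displays---so as written there is a genuine gap; either repair the propagation rule along these lines or use the paper's inductive splitting.
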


\begin{proof}
We induct on $n$; the values of $a_i$ will not affect the statement. The claim is immediately true for $n=1$ by analyzing minimal and maximal matchings on a zig-zag snake graph. 

Now assume we have shown the claim for snake graphs $\mathcal{G}[a_1,\ldots,a_{n-1}]$ for any choices of $a_i \in \mathbb{Z}_{>0}$, and consider a snake graph  $\mathcal{G}[a_1,\ldots,a_{n}]$. Assume that $n$ is even. Then, we know that the minimal matching of $G' = \mathcal{G}[a_1,\ldots,a_{n-1}+1]$ uses exactly one dominant edge. Suppose that this is the dominant edge on $G_1$, so that we use the non dominant edge on the final tile of $G'$. Call this final tile $G_{m'}$. We form $G$ by gluing the non-dominant edge of the first tile of $G'' = \mathcal{G}[a_n]$ onto the dominant edge on $G_{m'}$. Thus, we can complete the minimal matching on $G$ by taking the minimal/maximal matching on $G''$ which uses the non dominant edge on the first tile. By the base case, this matching of $G''$ uses the dominant edge on its last tile. Therefore, in this case the minimal matching on $G$ uses both dominant edges, which immediately implies that the maximal matching uses neither. The other cases can be proven similarly. 
\end{proof}

By considering the fact that the minimal and maximal matchings on $G$ descend to good matchings for either choice of band graph coming from $G$, we have the following corollary to Lemma \ref{lem:DominantEdges}.

\begin{cor}\label{cor:DominantEdgesGluing}
Let $G = \mathcal{G}[a_1,\ldots,a_n]$. 
\begin{itemize}
    \item If $n$ is even, then either band graph arising from $G$ involves gluing exactly one dominant edge.
    \item If $n$ is odd, then one band graph arising from $G$ involves gluing neither dominant edge and the other involves gluing both dominant edges. 
\end{itemize}
\end{cor}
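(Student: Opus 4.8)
The plan is to read the corollary straight off Lemma \ref{lem:DominantEdges} once we make explicit what the two band graphs attached to $G$ are. Recall that forming a band graph means identifying some edge $e \in \{S(G_1),W(G_1)\}$ with some edge $e' \in \{N(G_m),E(G_m)\}$, subject to the rule that $e$ lies in the minimal matching of $G$ if and only if $e'$ does not. The first step is to record the standard fact that the minimal matching of $G$ uses exactly one of $S(G_1),W(G_1)$ (by our convention, $S(G_1)$) and, symmetrically, exactly one of $N(G_m),E(G_m)$, while the maximal matching uses the other edge of each pair. Given this, the compatibility rule pins down each of the two band graphs: one of them glues the edge of $G_1$ used by the minimal matching to the edge of $G_m$ used by the maximal matching, and the other glues the edge of $G_1$ used by the maximal matching to the edge of $G_m$ used by the minimal matching. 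So to prove the corollary it suffices to count, in each of these two gluings, how many of the two identified edges are dominant.

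Next I would translate ``dominant'' into this language via Lemma \ref{lem:DominantEdges} (keeping its hypotheses $a_1>1$, $a_n>1$, which we may always arrange by normalizing the continued fraction). Suppose $n$ is even. Then the lemma says the minimal matching uses either both dominant edges or neither; in particular the edge of $G_1$ in the minimal matching and the edge of $G_m$ in the minimal matching have the same dominant-status, and the two maximal-matching edges have the opposite status. Hence in each of the two band graphs we are gluing one dominant edge to one non-dominant edge, so exactly one dominant edge is glued in each case. Suppose instead $n$ is odd. Then the minimal matching uses exactly one dominant edge, so either it uses the dominant edge of $G_1$ together with the non-dominant edge of $G_m$, or vice versa. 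In the first sub-case the band graph gluing (minimal edge of $G_1$, which is dominant) to (maximal edge of $G_m$, which is dominant because the minimal edge of $G_m$ is non-dominant and there are only two edges available) glues both dominant edges, while the other band graph glues the two non-dominant edges; the second sub-case is the mirror image. This yields exactly the stated dichotomy.

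Since the whole argument is a short finite case check sitting on top of Lemma \ref{lem:DominantEdges}, I do not expect a genuine obstacle. The one point that needs care is the dictionary in the first paragraph: establishing cleanly that each of the two band graphs is ``glue the minimal-matching edge on one end to the maximal-matching edge on the other end'', so that the parity statement of the lemma can be invoked without ambiguity about which edge plays which role. I would present the final write-up as a brief tabulation over the parity of $n$, and within odd $n$ over the two sub-cases, so that the bookkeeping is fully transparent.
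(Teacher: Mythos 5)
Your argument is correct and is essentially the paper's own proof spelled out: the paper disposes of this corollary in one sentence by noting that the minimal and maximal matchings descend to good matchings of the two band graphs (i.e.\ each gluing pairs the minimal-matching edge at one end with the maximal-matching edge at the other) and then invoking Lemma \ref{lem:DominantEdges}, exactly the dictionary and case check you carry out. The only small quibble is your remark that $a_1>1$, $a_n>1$ ``may always be arranged by normalizing,'' since rewriting $[a_1,\ldots,a_{n-1},1]$ as $[a_1,\ldots,a_{n-1}+1]$ flips the parity of $n$; the corollary should instead simply be read under the same standing conventions as the lemma.
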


By Corollary \ref{cor:DominantEdgesGluing}, a band graph formed from $\mathcal{G}[a_1,\ldots, a_n]$ is determined by knowing whether or not the dominant edge on $G_1$ is the glued edge. We let $\mathcal{G}^\circ_D[a_1,\ldots,a_n]$ denote the band graph which involves gluing the dominant edge on $G_1$ and $\mathcal{G}^\circ_N[a_1,\ldots,a_n]$ denote the band graph which involves gluing the non-dominant edge on $G_1$. Similarly, let $\mathcal{N}^\circ_X[a_1,\ldots,a_n]$ be the number of good matchings of $\mathcal{G}^\circ_X[a_1,\ldots,a_n]$ for $X$ either $D$ or $N$. 

\begin{prop}\label{prop:BandMatchings}
Let $a_1,\ldots,a_n$ be positive integers with $a_1 > 1$ and $a_n > 1$.
\begin{enumerate}
    \item If $n$ is even, then \[
    \mathcal{N}^\circ_D[a_1,\ldots,a_n] = \mathcal{N}[a_1,\ldots,a_n] - \mathcal{N}[a_2,\ldots,a_n-1]
    \]
    and 
    \[
    \mathcal{N}^\circ_N[a_1,\ldots,a_n] =\mathcal{N}[a_1,\ldots,a_n] - \mathcal{N}[a_1-1,\ldots,a_{n-1}]
    \]
    \item If $n$ is odd , then \[
    \mathcal{N}^\circ_D[a_1,\ldots,a_n] =\mathcal{N}[a_1,\ldots,a_n] - \mathcal{N}[a_2,\ldots,a_{n-1}]
    \]
    and \[
    \mathcal{N}^\circ_N[a_1,\ldots,a_n] =\mathcal{N}[a_1,\ldots,a_n] - \mathcal{N}[a_1-1,\ldots,a_n-1]
    \]
    Note that in the $n = 1$ case, $\mathcal{N}^\circ_D[a_1] = a_1$ and $\mathcal{N}^\circ_N[a_1] = 2$.  
\end{enumerate}
\end{prop}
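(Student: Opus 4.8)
The plan is to reduce everything to complementary counting on the underlying snake graph. Write $G=\mathcal{G}[a_1,\ldots,a_n]$ and let $G_d$ be its last tile. Recall from the discussion preceding the definition of a dominant edge that the good matchings of the band graph obtained by gluing $e\in\{S(G_1),W(G_1)\}$ to $e'\in\{N(G_d),E(G_d)\}$ are exactly the perfect matchings of $G$ that use $e$ or $e'$. In any perfect matching of $G$ precisely one of $S(G_1),W(G_1)$ occurs (these are the two edges meeting the southwest corner of $G_1$) and precisely one of $N(G_d),E(G_d)$ occurs, so a perfect matching uses neither $e$ nor $e'$ exactly when it uses the complementary pair $\bar e\in\{S(G_1),W(G_1)\}\setminus\{e\}$ and $\bar e'\in\{N(G_d),E(G_d)\}\setminus\{e'\}$. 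Hence
\[
\mathcal N^\circ_X[a_1,\ldots,a_n]=\mathcal N[a_1,\ldots,a_n]-B_X,\qquad X\in\{D,N\},
\]
where $B_X$ counts perfect matchings of $G$ containing both $\bar e$ and $\bar e'$. By the definitions of $\mathcal{G}^\circ_D$ and $\mathcal{G}^\circ_N$, the edge $\bar e$ is the non-dominant edge of $G_1$ when $X=D$ and the dominant edge of $G_1$ when $X=N$, and Corollary \ref{cor:DominantEdgesGluing} forces $\bar e$ and $\bar e'$ to have \emph{opposite} dominance types when $n$ is even and the \emph{same} dominance type when $n$ is odd.

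The technical core is the following claim: the number of perfect matchings of $\mathcal{G}[a_1,\ldots,a_n]$ (with $a_1>1$) using the dominant edge of $G_1$ equals $\mathcal N[a_1-1,a_2,\ldots,a_n]$, and, when $G_1\ne G_d$, the number using the dominant edge of $G_1$ \emph{and} the dominant edge of $G_d$ equals $\mathcal N[a_1-1,a_2,\ldots,a_{n-1},a_n-1]$. I would prove this with the edge-deletion bijection underlying Theorem \ref{thm:ContFracSnakeGraph}: fixing the dominant edge of $G_1$ covers the two vertices incident to $G_1$ alone, so the internal edge between $G_1$ and $G_2$ becomes a boundary edge of the snake graph $G$ with tile $G_1$ removed; a short inspection of the sign sequence (and Lemma \ref{lem:OneAtEnd} when $a_1=2$) identifies this reduced graph as $\mathcal{G}[a_1-1,a_2,\ldots,a_n]$, and the analogous deletion at the other end is independent of the first as soon as $G_1\ne G_d$. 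By Lemma \ref{lem:FlipContFraction} the mirror statement holds at $G_d$, giving $\mathcal N[a_1,\ldots,a_{n-1},a_n-1]$ matchings using the dominant edge of $G_d$.

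Granting the claim, each $B_X$ is a short continued-fraction computation: rewrite ``uses the non-dominant edge'' as ``total minus uses the dominant edge'' at each end, and apply inclusion--exclusion when both $\bar e,\bar e'$ are non-dominant, so that $B_X$ becomes an alternating sum of numerators $\mathcal N[\,\cdot\,]$ in which the first and/or last entry is kept or decremented by one. Repeated use of Lemma \ref{lem:ContFracRecurrence} in the forms $\mathcal N[a_1,\ldots]-\mathcal N[a_1-1,\ldots]=\mathcal N[a_2,\ldots]$ and $\mathcal N[\ldots,a_n]-\mathcal N[\ldots,a_n-1]=\mathcal N[\ldots,a_{n-1}]$ collapses this sum, yielding $B_D=\mathcal N[a_2,\ldots,a_n-1]$ and $B_N=\mathcal N[a_1-1,\ldots,a_{n-1}]$ for $n$ even, and $B_D=\mathcal N[a_2,\ldots,a_{n-1}]$ and $B_N=\mathcal N[a_1-1,\ldots,a_n-1]$ for $n$ odd; substituting into the displayed identity gives the four stated formulas. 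The case $n=1$ ($G$ a zigzag) I would settle by hand: the final collapses above require $n\ge 2$ (indeed $n\ge 3$ in the all-non-dominant case), and one checks directly that $\mathcal N^\circ_D[a_1]=a_1$ — here fixing the non-dominant edge of $G_1$ forces the unique matching, which uses the dominant edge of $G_d$, so $B_D=0$ — while $\mathcal N^\circ_N[a_1]=2$.

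The step I expect to be the main obstacle is the bijection claim, specifically verifying that deleting an end tile turns the neighbouring internal edge into a boundary edge of \emph{exactly} $\mathcal{G}[a_1-1,a_2,\ldots,a_n]$ with no residual constraint, across all the sign and orientation conventions for snake graphs, and pinning down precisely when the two end-deletions fail to be independent; the $n=1$ exception in the $\mathcal{G}^\circ_D$ formula is the telltale sign that this independence must be tracked carefully.
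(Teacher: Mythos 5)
Your proof is correct and follows essentially the same route as the paper: count the perfect matchings of the underlying snake graph that avoid both glued edges and subtract them from $\mathcal{N}[a_1,\ldots,a_n]$, identifying that complementary count with a continued-fraction numerator whose first and/or last entry is dropped or decremented. The paper reads this count off directly from the forced minimal/maximal matching on the end segments, whereas you obtain it via a dominant-edge counting lemma (tile deletion) plus inclusion--exclusion and Lemma \ref{lem:ContFracRecurrence}; this is the same idea with more explicit bookkeeping, and it correctly isolates the $n=1$ exception for $\mathcal{N}^\circ_D$ that the paper handles by a separate note.
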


\begin{proof}
Each case is proven by counting the number of matchings of the un-glued snake graph $G$ which do not lift to a good matching of the band graph $B$. These are exactly the matchings which do not use either of the edges which are identified to form the band graph. When we have a dominant edge in the gluing, then not using this dominant edge forces a minimal/maximal matching on the first or last section of $G$; hence, we are ignoring the first or last entry of the continued fraction. When a nondominant edge is in the gluing, then if we instead use the adjacent dominant edge we have not forced any additional edges to be used in the matching. 
\end{proof}

Other formulas for computing the number of good matchings of a band graph will appear in \cite{Apruzzese}.

We use the results above to analyze the number of perfect matchings of band graphs coming from the arcs $\gamma_{(q,p)} = \gamma_{\frac{p}{q}}$. Following Section 4.3 of \cite{CS}, we take the arc $\gamma_{\frac{p}{q}}$ and nudge both endpoints an infinitesimal amount away from the lattice point to the points $(\epsilon,\epsilon)$ and $(q+\epsilon,p+\epsilon)$ for small $\epsilon>0$; we also change the arc so that it no longer passes through $(q,p)$ by nudging the path to the left or right, as in the case of arcs $\gamma_{(kq,kp)}$. Then, we identify these points; this corresponds to a simple (i.e. without self-intersections) closed curve in the orbifold $\mathcal{O}_3$.

Let $\gamma^{L, \circ}_{\frac{p}{q}}$ and $\gamma^{R, \circ}_{\frac{p}{q}}$ be these two arcs with identified endpoints, and let $m^{L, \circ}_{\frac{p}{q}}$ and $m^{R, \circ}_{\frac{p}{q}}$ be the number of good matchings of the corresponding band graphs. Call these band graphs $G^{L,\circ}_{\frac{p}{q}}$ and $G^{R,\circ}_{\frac{p}{q}}$. The proof of Theorem \ref{thm:BandMarkov} will describe the structure of these graphs and show that $m^{L,\circ}_{\frac{p}{q}} = m^{R,\circ}_{\frac{p}{q}}$.

\begin{theorem}\label{thm:BandMarkov}
For $p,q \in \mathbb{Z}_{>0}$ with $p \leq q$ and $\gcd(p,q) = 1$, we have \[
m^{L, \circ}_{\fpq} = m^{R, \circ}_{\fpq}= 6 m_\fpq -1.
\]
\end{theorem}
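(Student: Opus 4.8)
The plan is to describe the band graph $G^{L,\circ}_{\frac pq}$ explicitly in terms of the continued fraction $C_{\frac pq} = [a_1,\ldots,a_n]$ and then apply Proposition \ref{prop:BandMatchings} to count its good matchings. When we nudge the endpoints of $\gamma_{\frac pq}$ apart and reidentify them, the resulting band graph is obtained by gluing the snake graph $G_{\gamma_{\frac pq},T_0}$ to itself; since $\gamma_{\frac pq}$ crosses the same arc $\sigma_{p+q-1}$ near both of its (now-identified) ends, the underlying snake graph of the band graph is essentially $\mathcal{G}[C_{\frac pq}]$ concatenated with a mirrored copy, with the midpoint crossing becoming the glued edge. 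Concretely I expect the underlying snake graph to be $\mathcal{G}[a_1,\ldots,a_{n-1},a_n+1,a_n,\ldots,a_1]$ or a close variant (here I would use the anti-symmetry $a_i = a_{2(q-1)-i+1}$ from Corollary \ref{cor:elementaryDescription} so that the two halves are genuine mirror images), with a left deformation forcing a choice of sign at the central crossing that determines which of $\mathcal{G}^\circ_D$ or $\mathcal{G}^\circ_N$ we land in.

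The next step is to feed this into Proposition \ref{prop:BandMatchings}. Writing the underlying snake graph of $G^{L,\circ}_{\frac pq}$ as $\mathcal{G}_X[b_1,\ldots,b_N]$ for the appropriate $X\in\{D,N\}$, Proposition \ref{prop:BandMatchings} expresses $m^{L,\circ}_{\frac pq} = \mathcal{N}^\circ_X[b_1,\ldots,b_N]$ as $\mathcal{N}[b_1,\ldots,b_N]$ minus a shorter continued-fraction numerator. The first term $\mathcal{N}[b_1,\ldots,b_N]$ is a palindromic-ish continued fraction built from two copies of $C_{\frac pq}$, and Lemma \ref{lem:ContFracRecurrence} applied at the central entry (together with the ordinary Markov-style identity $m_{\frac pq}^2 = $ numerator of the doubled fraction, up to lower-order terms) should collapse it to something like $c\cdot m_{\frac pq}^2$ for an explicit small constant, while the subtracted term is linear in $m_{\frac pq}$. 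Lemma \ref{lem:SwapPlus1} is presumably the tool that reconciles the two possible placements of the "$+1$" at the center — this is exactly the lemma the authors flagged as being for the proof of this theorem — so I would use it to rewrite $\mathcal{N}[a_1,\ldots,a_k+1,a_k,\ldots,a_2]$-type expressions and absorb the error constant $c_k = \pm 1$. The equality $m^{L,\circ}_{\frac pq} = m^{R,\circ}_{\frac pq}$ follows from Lemma \ref{lem:FlipContFraction} (reversal invariance of numerators) exactly as in the $\gamma_{(kq,kp)}$ discussion, since $C^R$ is the reversal of $C^L$.

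The final arithmetic step is to check that the combination "doubled-continued-fraction numerator minus the shorter numerator" equals $6m_{\frac pq} - 1$. Here I would use the generalized Markov equation $x^2+y^2+z^2+xy+xz+yz = 6xyz$ applied to the triple containing $m_{\frac pq}$: if $(a,b,c)$ is a generalized Markov tuple with $c = m_{\frac pq}$, then the doubling of the corresponding arc relates to $a^2+ab+b^2 = c\cdot(\text{the Vieta-jump partner})$, and the band-graph count should come out to $6abc/c - (\text{linear terms}) = 6m_{\frac pq} - 1$ after the same cancellations as in the proof of Proposition \ref{prop:Recurrence1/q}. I expect the main obstacle to be the bookkeeping in the first step: pinning down exactly which continued fraction represents the underlying snake graph of the band graph (including the precise location and sign of the $+1$ at the glued central edge, and whether a left deformation gives the $D$ or the $N$ gluing), since an off-by-one or a wrong choice of $X$ there propagates into the wrong additive constant. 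Once that combinatorial identification is correct, the continued-fraction manipulations via Lemmas \ref{lem:ContFracRecurrence}, \ref{lem:SwapPlus1}, and \ref{lem:FlipContFraction} together with the generalized Markov equation should be routine.
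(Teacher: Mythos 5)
The central step of your proposal misidentifies the band graph. The underlying snake graph of $G^{L,\circ}_{\frac{p}{q}}$ is \emph{not} a doubled, mirrored copy of $\mathcal{G}[C_{\frac{p}{q}}]$: identifying the nudged endpoints produces a closed curve whose crossing sequence is that of $\gamma_{\frac{p}{q}}$ itself (with the central crossing pushed off the midpoint, fixing its sign) followed by the six extra crossings coming from the small half-circle around the lattice point, so the underlying snake graph is $\mathcal{G}[a_1,\ldots,a_n,6]$ with $d+6$ tiles, glued along the edge labeled $\tau_{\frac{0}{1}}$ (dominant on the first tile for the left nudge, non-dominant for the right nudge, as determined by which arcs of $T_0$ are crossed first and last and by the initial $--$ in the sign sequence — not by the sign at the central crossing). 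The graph you describe, built from two copies of $C_{\frac{p}{q}}$, is instead the snake graph of the doubled arc $\gamma_{(2q,2p)}$; its matching count is $m_{(2q,2p)} = m^{\circ}_{\frac{p}{q}}\, m_{\frac{p}{q}}$ (Theorem \ref{thm:Recurrence}), which is quadratic in $m_{\frac{p}{q}}$, whereas the band graph count $6m_{\frac{p}{q}}-1$ is linear. So a doubled graph simply cannot be the band graph, and your plan to obtain ``something like $c\cdot m_{\frac{p}{q}}^2$ minus linear terms'' and then invoke the generalized Markov equation has no route to the stated identity; moreover, using Theorem \ref{thm:Recurrence} to extract $m^{\circ}_{\frac{p}{q}}$ from $m_{(2q,2p)}$ would be circular, since that theorem's proof relies on the present one.

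Once the band graph is identified correctly, the computation is shorter and needs neither the Markov equation nor any quadratic identity: by Proposition \ref{prop:BandMatchings}, $m^{L,\circ}_{\frac{p}{q}} = \mathcal{N}[a_1,\ldots,a_n,6] - \mathcal{N}[a_2,\ldots,a_n]$, and Lemma \ref{lem:ContFracRecurrence} applied at the final entry $6$ gives $6\,m_{\frac{p}{q}} + \bigl(\mathcal{N}[a_1,\ldots,a_2] - \mathcal{N}[a_2,\ldots,a_1]\bigr)$; the near-palindromic structure of $C_{\frac{p}{q}}$ (middle entries differing by one) lets Lemma \ref{lem:SwapPlus1} evaluate that difference as $-1$. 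You correctly guessed the relevant tools (Proposition \ref{prop:BandMatchings} and Lemma \ref{lem:SwapPlus1}), but they are applied to a single slightly-extended copy of the snake graph, not a concatenation of two copies. Finally, $m^{L,\circ}_{\frac{p}{q}} = m^{R,\circ}_{\frac{p}{q}}$ does not follow from reversal invariance of snake-graph numerators alone: the right-nudge data is $\mathcal{N}^{\circ}_N[a_1,\ldots,a_n-1,1,6]$, a different continued fraction with the opposite gluing choice, and the paper must run the analogous computation again (with Lemma \ref{lem:OneAtEnd} and Lemma \ref{lem:SwapPlus1}) to land on the same value $6m_{\frac{p}{q}}-1$.
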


\begin{proof}
We consider $p = q=1$ separately.  The arc $\gamma_{\frac{1}{1}}$ only crosses one arc, $\tau_{\frac{-1}{1}}$, so $G_{\frac{1}{1}} = \mathcal{G}[3]$. The sign sequence for $\gamma^{L,\circ}_{\frac{1}{1}}$, keeping the convention $a_1 > 1$ and $a_n>1$, is $+++------$. Since the first arc that $\gamma^{L,\circ}_{\frac{1}{1}}$ crosses is $\tau_{\frac{-1}{1}}$ and the last arc is $\tau_{\frac{1}{0}}$, we will form the graph $G^{L,\circ}_{\frac{1}{1}}$ by gluing the end tiles on the edges labeled $\tau_{\frac{0}{1}}$. Thus, we can compute $m_{\frac{1}{1}}^{L,\circ}$ once we determine whether this edge is dominant on the first tile. Using the convention that the orientation of the labels of the first tile should match the orientation of the lattice (and of $\mathcal{O}$), we see that the edge labeled $x_{\frac{0}{1}}$ on the first tile is the non-dominant edge. By Proposition \ref{prop:BandMatchings}, $m^{L,\circ}_{\frac{1}{1}} = \mathcal{N}^\circ_N[3,6] = \mathcal{N}[3,6] - \mathcal{N}[2] = 17.$

\begin{center}
\begin{tikzpicture}[scale = 1.5]
\draw (6,1) to(7,1) to  (7,2) to   (6,2) to  node[above]{} (5,2) to node[left]{$x_{\frac{1}{0}}$} (5,1) to node[below]{$x_{\frac{0}{1}}$}  (6,1);
\draw (6,1) --(6,2) ;
\draw[dashed,gray] (5,2) to node[right,  yshift = 2pt, xshift = -2pt ]{$x_{\frac{-1}{1}}$} (6,1);
\draw[dashed,gray] (6,2) to node[right,  yshift = 2pt, xshift = -2pt]{$x_{\frac{-1}{1}}$} (7,1);
\node[] at (7.3,2.3){$\iddots$};
\end{tikzpicture}
\end{center}

If we instead consider $\gamma^{R,\circ}_{\frac{1}{1}}$, then the sign sequence is $---++++++$. The last arc that $\gamma^{R,\circ}_{\frac{1}{1}}$ crosses is instead $\tau_{\frac{0}{1}}$, and the first two tiles of $G^{R,\circ}_{\frac{1}{1}}$ are glued vertically instead of horizontally. Thus, $m^{L,\circ}_{\frac{1}{1}} = \mathcal{N}^\circ_N[3,6]$ as well.

Now consider $\frac{p}{q} < 1$. The first arc $\gamma^{L,\circ}_\fpq$ crosses is $\tau_{\frac{-1}{1}}$ and the last arc $\gamma^{L,\circ}_\fpq$ crosses is $\tau_{\frac{1}{0}}$. Thus, we know that the band graph $G^\circ_\fpq$ glues along $\tau_{\frac{0}{1}}$. For $\frac{p}{q}<1$ the first two entries of the sign sequence will be $--$, regardless of the direction of nudging. This implies that the first two tiles will always be glued vertically, as below. Thus, we will be gluing along the dominant edge on the first tile.

\begin{center}
\begin{tikzpicture}[scale = 1.5]
\draw[thick] (0,0) to node[below]{$x_{\frac{0}{1}}$} (1,0) -- (1,1) -- (1,2) -- (0,2) -- (0,1) to node[left]{$x_{\frac{1}{0}}$} (0,0);
\draw (0,1) -- (1,1);
\draw[gray, dashed] (1,0) to node[right, yshift = 2pt, xshift = -2pt]{$x_{\frac{-1}{1}}$} (0,1);
\draw[gray, dashed,  yshift = 2pt, xshift = -2pt] (1,1) to node[right]{$x_{\frac{-1}{1}}$} (0,2);
\node[] at (1.3,2.3){$\iddots$};
\end{tikzpicture}
\end{center}

We next evaluate how the continued fraction associated to $\gamma_{\fpq}^{L,\circ}$ compares with that for $\gamma_{\fpq}$. As mentioned above, at the central crossing point for $\gamma_{\fpq}$, we now use $+$ since this crossing is now slightly closer to the left endpoint. Then, by similar reasons as for arcs $\gamma^L_{(kq,kp)}$, we see that if $C_{\frac{p}{q}} = [a_1,\ldots,a_1]$, then $C_{\frac{p}{q}}^{L,\circ} = [a_1,\ldots,a_1,6]$ where $C_{\frac{p}{q}}^{L,\circ}$ gives the continued fraction from the sign sequence for the arc $\gamma^L_{(kq,kp)}$ ignoring the identification.  Thus, we have that $m_{\fpq}^{L,\circ} = \mathcal{N}_D^\circ[a_1,\ldots,a_1,6]$. 

We know that $n$ is even, so we can compute this using Proposition \ref{prop:BandMatchings},
\[
\mathcal{N}^\circ_D[a_1,\ldots,a_1,6] = \mathcal{N}[a_1,\ldots,a_1,6] - \mathcal{N}[a_2,\ldots, a_1].
\]
By Lemma \ref{lem:ContFracRecurrence}, we further manipulate the first term, 
\[
\mathcal{N}^\circ_D[a_1,\ldots,a_1,6] = 6 \mathcal{N}[a_1,\ldots,a_1] + \mathcal{N}[a_1,\ldots,a_2] - \mathcal{N}[a_2,\ldots,a_1].
\]
Since $\mathcal{N}[a_1,\ldots,a_1] = m_{\frac{p}{q}}$, we are left with showing $\mathcal{N}[a_1,\ldots,a_2] - \mathcal{N}[a_2,\ldots,a_1] = -1$. First, suppose that the number of terms in $a_1,\ldots,a_2$ is $4\ell -1$ for some $\ell \geq 1$. Then, we have that $C_{\frac{p}{q}}^{L,\circ} = a_1,a_2,\ldots, a_{2\ell} + 1, a_{2\ell}, \ldots, a_2,a_1,6$, since even-indexed entries count the length of subsequences of $+$'s and  we assign $+$ to the central crossing point. Thus, by Lemma \ref{lem:SwapPlus1}, $\mathcal{N}[a_1,\ldots,a_2] - \mathcal{N}[a_2,\ldots,a_1] = -c_{2\ell} = -1$.

Next, suppose that the number of terms in $a_1,\ldots,a_2$ is $4\ell + 1$. Then, $C_{\frac{p}{q}}^{L,\circ} = a_1,a_2,\ldots, a_{2\ell+1}, a_{2\ell+1}+1, \ldots, a_2,a_1,6$, so \begin{align*}
\mathcal{N}[a_1,\ldots,a_{2\ell+1}, a_{2\ell+1}+1, \ldots, a_2]  - \mathcal{N}[a_2,\ldots,a_{2\ell+1}, a_{2\ell+1}+1, \ldots, a_1]\\
=\mathcal{N}[a_2,\ldots,a_{2\ell + 1} + 1, a_{2\ell + 1},\ldots,a_1] - \mathcal{N}[a_1,\ldots,a_{2\ell + 1} + 1, a_{2\ell + 1},\ldots,a_1]\\ = c_{2\ell + 1} = -1,
\end{align*}

where we again apply Lemma \ref{lem:SwapPlus1}. 

Next we turn to $\gamma_{\frac{p}{q}}^{R,\circ}$. As when analyzing arcs $\gamma_{(kq,kp)}^R$, we have that if $C_{\frac{p}{q}} = [a_1,\ldots,a_1]$, then $C_{\frac{p}{q}}^{R,\circ} = [a_1,\ldots,a_1-1,1,6]$. In this case, the first arc $\gamma_{\fpq}^{R,\circ}$ crosses is $\tau_{\frac{-1}{1}}$ and the last arc crossed is $\tau_{\frac{0}{1}}$. Therefore, we form $G_{\fpq}^{R,\circ}$ by gluing along the edge labeled with $\tau_{\frac{1}{0}}$ on the first tile; this is the non-dominant edge. Therefore, $m_{\fpq}^{R,\circ} = \mathcal{N}_N^\circ[a_1,\ldots,a_1-1,1,6]$. Since the number of terms is even, by Proposition \ref{prop:BandMatchings} we have that \begin{align*}
\mathcal{N}_N^\circ[a_1,\ldots,a_1-1,1,6] &= \mathcal{N}[a_1,\ldots,a_1-1,1,6] - \mathcal{N}[a_1-1,\ldots,a_1-1,1]\\
&= 6\mathcal{N}[a_1,\ldots,a_1-1,1] + \mathcal{N}[a_1,\ldots,a_1-1] \\&- \mathcal{N}[a_1-1,\ldots,a_1-1,1]\\
&= 6 m_{\fpq} + \mathcal{N}[a_1,\ldots,a_1-1] - \mathcal{N}[a_1-1,\ldots,a_1]
\end{align*}

We can rewrite \[
\mathcal{N}[a_1,\ldots,a_1-1] - \mathcal{N}[a_1-1,\ldots,a_1] = \mathcal{N}[1,a_1-1,\ldots,a_1-1] - \mathcal{N}[a_1-1,\ldots,a_1-1,1],
\]

and then by a similar argument as before, using Lemma \ref{lem:SwapPlus1}, we can show that $\mathcal{N}[1,a_1-1,\ldots,a_1-1] - \mathcal{N}[a_1-1,\ldots,a_1-1,1] = -1$.
\end{proof}

Seeing in Theorem \ref{thm:BandMarkov} that our choice of direction to navigate around the lattice point does not matter, we now will write $m_{\fpq}^{\circ} = m_{\fpq}^{L,\circ} = m_{\fpq}^{R,\circ}$. In \cite{CS}, the authors show that for ordinary Markov numbers and arcs on a torus, $n^\circ_\fpq = 3n_\fpq$. 


\subsection{Recurrence on $m_{(kq,kp)}$}

In the following, we show how $m_{(kq,kp)}$ compares with $m_{(q,p)}$. 

\begin{theorem}\label{thm:Recurrence}
Let $\frac{p}{q} \leq 1$ have $\gcd(p,q) = 1$. Let $k \geq 2$, and define $m_0 = 0$. Then, 
\[
m_{\frac{kp}{kq}} = m^\circ_{\frac{p}{q}} m_{\frac{(k-1)p}{(k-1)q}} - m_{\frac{(k-2)p}{(k-2)q}}.
\]
\end{theorem}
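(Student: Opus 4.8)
The plan is to reduce the recurrence to Cayley--Hamilton applied to a single $2\times 2$ "continuant matrix" attached to the repeated block of $C^L_{(kq,kp)}$. Recall the classical fact that $\mathcal N[c_1,\dots,c_N]$ is the top-left entry of $\prod_{i=1}^{N}\bigl(\begin{smallmatrix}c_i&1\\1&0\end{smallmatrix}\bigr)$, and more precisely that this product equals $\bigl(\begin{smallmatrix}\mathcal N[c_1,\dots,c_N]&\mathcal N[c_1,\dots,c_{N-1}]\\ \mathcal N[c_2,\dots,c_N]&\mathcal N[c_2,\dots,c_{N-1}]\end{smallmatrix}\bigr)$ (so concatenation of words corresponds to multiplication of these matrices). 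Write $\mathbf a=a_1,\dots,a_n$ for $C_{\frac pq}$ and let $\mathbf a'=1,a_1-1,a_2,\dots,a_n$ be the block that follows each $5$ in $C^L_{(kq,kp)}$; by Lemma~\ref{lem:OneAtEnd}, $\mathcal N[\mathbf a']=\mathcal N[\mathbf a]=m_{\frac pq}$. Let $A$ be the continuant matrix of $\mathbf a$ and $T$ the continuant matrix of the word $5,\mathbf a'$. The shape of $C^L_{(kq,kp)}$ recorded in Section~\ref{sec:extendalgorithm} says exactly that $m_{\frac{kp}{kq}}=\mathcal N(C^L_{(kq,kp)})=(A\,T^{k-1})_{11}$ for all $k\ge 1$, with $m_{\frac pq}=A_{11}$.

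Next I would record two facts about $T$. First, $\det T=(-1)^{n+2}=(-1)^{n}$, and since $\frac pq<1$ Corollary~\ref{cor:elementaryDescription} gives $n=2(q-1)$, so $\det T=1$. Second, $\operatorname{tr}T=m^{\circ}_{\frac pq}$. Granting these, Cayley--Hamilton gives $T^{2}=(\operatorname{tr}T)\,T-I$, hence $T^{k-1}=(\operatorname{tr}T)\,T^{k-2}-T^{k-3}$ for every integer $k$ (negative powers make sense since $\det T\neq0$). Left-multiplying by $A$ and reading off the $(1,1)$ entry yields $m_{\frac{kp}{kq}}=(\operatorname{tr}T)\,m_{\frac{(k-1)p}{(k-1)q}}-m_{\frac{(k-2)p}{(k-2)q}}$ for $k\ge 3$. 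For $k=2$ the identity $AT=(\operatorname{tr}T)A-AT^{-1}$ gives the same relation with $m_0:=(AT^{-1})_{11}$; using $T^{-1}=\bigl(\begin{smallmatrix}T_{22}&-T_{12}\\-T_{21}&T_{11}\end{smallmatrix}\bigr)$ together with $T_{21}=\mathcal N[\mathbf a']=m_{\frac pq}$ and $T_{22}=\mathcal N[1,a_1-1,a_2,\dots,a_{n-1}]=\mathcal N[a_1,\dots,a_{n-1}]$ (Lemma~\ref{lem:OneAtEnd}), one computes $(AT^{-1})_{11}=A_{11}T_{22}-A_{12}T_{21}=\mathcal N[\mathbf a]\,\mathcal N[a_1,\dots,a_{n-1}]-\mathcal N[a_1,\dots,a_{n-1}]\,\mathcal N[\mathbf a]=0$, matching the convention $m_0=0$ in the statement.

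It remains to prove $\operatorname{tr}T=m^{\circ}_{\frac pq}=6m_{\frac pq}-1$ (by Theorem~\ref{thm:BandMarkov}). Expanding $\operatorname{tr}T=\mathcal N[5,\mathbf a']+\mathcal N[1,a_1-1,a_2,\dots,a_{n-1}]$, applying Lemma~\ref{lem:ContFracRecurrence} to the first term, and using $\mathcal N[\mathbf a']=m_{\frac pq}$, $\mathcal N[a_1-1,a_2,\dots,a_n]=m_{\frac pq}-\mathcal N[a_2,\dots,a_n]$, and Lemma~\ref{lem:OneAtEnd}, this collapses to
\[
\operatorname{tr}T=6m_{\frac pq}+\bigl(\mathcal N[a_1,\dots,a_{n-1}]-\mathcal N[a_2,\dots,a_n]\bigr),
\]
so it suffices to show $\mathcal N[a_1,\dots,a_{n-1}]-\mathcal N[a_2,\dots,a_n]=-1$. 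By Lemma~\ref{lem:FlipContFraction} the second numerator equals $\mathcal N[a_n,\dots,a_2]$, and by the near-palindromic structure of $C_{\frac pq}$ (Corollary~\ref{cor:elementaryDescription}(3), with the two central entries differing by exactly $1$ as described in Lemma~\ref{lem:MidPointPattern}) the word $a_n,\dots,a_2$ is obtained from $a_1,\dots,a_{n-1}$ by transposing the two central letters, one of which exceeds the other by $1$; for $n$ even, Lemma~\ref{lem:SwapPlus1} says exactly that such a transposition changes the numerator by $+1$. This is the same computation already carried out inside the proof of Theorem~\ref{thm:BandMarkov}, so it may be quoted directly. The degenerate case $\frac pq=1$, where $C_{\frac11}=[3]$ has odd length and the block pattern is special, is a short direct verification handled exactly as the $p=q=1$ case of Theorem~\ref{thm:BandMarkov}.

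The main obstacle is the trace identity $\operatorname{tr}T=6m_{\frac pq}-1$: everything else is formal $2\times2$ linear algebra, whereas this is the one place where the fine combinatorics of $C_{\frac pq}$ --- its near-palindromic shape and the precise behaviour at the midpoint crossing --- enters, and it is what produces the "$-1$" in $m^{\circ}_{\frac pq}$ and hence the exact form of the recurrence. A secondary technical point is verifying the orientation and gluing conventions so that $T$ really is the continuant matrix of $5,\mathbf a'$ rather than of a cyclic rotation of it; cyclic rotations have the same trace and determinant, so this subtlety only affects the $k=2$ bookkeeping above, where it is dispatched by the explicit computation of $(AT^{-1})_{11}$.
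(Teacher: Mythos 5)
Your proposal is correct, and it takes a genuinely different route from the paper. The paper proves the recurrence with the snake graph calculus of \cite{CS2}: it exhibits a self-overlap of $G_{(kq,kp)}$, checks the sign condition so that the graph self-crosses, and then the three terms of the recurrence appear geometrically in the resolution as $\mathcal{G}_3\cong G_{((k-1)q,(k-1)p)}$, the band graph $\mathcal{G}_4^\circ$ (with $m^\circ_{\frac{p}{q}}$ good matchings), and a graph $\mathcal{G}_{56}$ contributing $-m_{((k-2)q,(k-2)p)}$ (empty when $k=2$), via Theorems 4.5 and 4.12 of \cite{CS2}. You instead encode $C^L_{(kq,kp)}=\mathbf{a},(5,\mathbf{a}')^{k-1}$ in continuant matrices and obtain the three-term recurrence from Cayley--Hamilton; the only nontrivial inputs are $\det T=1$ (from $n=2(q-1)$ even, Corollary \ref{cor:elementaryDescription}) and $\operatorname{tr}T=6m_{\frac{p}{q}}-1$. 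Your reduction of the trace to $6m_{\frac{p}{q}}+\bigl(\mathcal{N}[a_1,\ldots,a_{n-1}]-\mathcal{N}[a_2,\ldots,a_n]\bigr)$ is correct (Lemmas \ref{lem:ContFracRecurrence} and \ref{lem:OneAtEnd}), and the remaining identity $\mathcal{N}[a_1,\ldots,a_{n-1}]-\mathcal{N}[a_2,\ldots,a_n]=-1$ is literally the computation carried out inside the proof of Theorem \ref{thm:BandMarkov} for the word with $+$ at the central crossing, which is the same word used to build $C^L_{(kq,kp)}$, so quoting it is legitimate (your parenthetical about Lemma \ref{lem:SwapPlus1} glosses over the two parity cases, but the citation covers this). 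The $k=2$ step via $(AT^{-1})_{11}=0$ checks out, and you are right to flag $p=q=1$: there the block pattern is $3,5,3,5,\ldots$ rather than $\mathbf{a},5,1,a_1-1,\ldots$, but then $T$ is the matrix of $5,3$ with determinant $1$ and trace $17=m^\circ_{\frac{1}{1}}$, so the same argument applies. As for what each approach buys: yours is elementary and self-contained at the level of continued fractions, avoids the self-crossing resolution machinery entirely, and makes the Chebyshev corollary immediate from $m_{(kq,kp)}=(AT^{k-1})_{11}$; the paper's argument explains geometrically why the coefficient is the closed-curve number $m^\circ_{\frac{p}{q}}$ (it is the band graph produced by smoothing the self-intersection), a viewpoint that extends to other skein-type relations.
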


\begin{proof}

We will show this relation by using \emph{snake graph calculus}, which was introduced in \cite{CS4}; we will look at resolving snake graphs with ``self-intersection'', which was explained in a follow-up work \cite{CS2}. We will use notation from \cite{CS2} and invite the interested reader to consult this source for more precise definitions. 

Given a snake graph $G = (G_1,\ldots,G_d)$, for $i \leq j$, let $G[i,j]$ be the subgraph $(G_i,\ldots,G_j)$.

We consider the $k = 2$ case separately. In this case, we expect that $m_{(2q,2p)} = m_{(q,p)}^\circ m_{(q,p)}$. If $G_{(q,p)}$ has $d$ tiles, then $G_{(2q,2p)}$ has $2d + 6$ tiles. Naturally, the subgraphs $G_{(2q,2p)}[1,d]$ and $G_{(2q,2p)}[d+7,2d+6]$ are isomorphic. Call this graph $\mathcal{G}$. The two inclusions $i_1$ and $i_2$ a of $\mathcal{G}$ into $G_{(kq,kp)}$ are maximal in the sense that there is no pair of larger isomorphic subgraphs of $G_{(2q,2p)}$ which contain $i_1(\mathcal{G})$ and $i_2(\mathcal{G})$.  This shows $G_{(2q,2p)}$ has a \emph{self-overlap} in the sense of \cite{CS2}. Since the inclusions $i_1,i_2$ both map the southwest-most tile of $\mathcal{G}$ to the southwest most tile of the corresponding subgraphs, this self-overlap is \emph{in the same direction}. 
We denote by $s$ and $t$ the labels of the first and last tiles of $i_1(\mathcal{G})$; that is,  $s = 1$ and $t = d$. Similarly, let $s' = d+7$ and $t' = 2d+6$ be the same indices for the first and last tiles of $i_2(\mathcal{G})$.

Next, we check that $G_{(2q,2p)}$ satisfies Definition 2.6 in \cite{CS2}. Since $s = 1$ and $t' = 2d+6$, where our graph has $2d+6$ tiles, we check that the sign on the internal edge between $G_{t}$ and $G_{t+1}$ (call this edge $e_t$) is the same as the sign on the internal edge between $G_{s'-1}$ and $G_s$ ($e_{s'-1}$).These signs correspond to the entries $f_{t}$ and $f_{s'-1}$ in the sign function $\mathbf{f}((2q,2p))$. By our convention, both of these entries will have sign $+$, since they correspond to a crossings right before and after the small circle $\gamma_{(2q,2p)}$ takes around $(q,p))$.

Thus, $G_{(2q,2p)}$ \emph{self-crosses}. This allows us to follow the construction in \cite{CS2} of a \emph{resolution} of the crossing; this will give an equation relating the number of perfect matchings of several snake graphs. 

\begin{itemize}
    \item First, we have that  $\mathcal{G}_3 = G[s,t] \cup G[t'+1,2d+6] = G[1,d]$; this is isomorphic to $G_{\frac{p}{q}}$. By definition, the number of matchings of this subgraph is  $m_{\frac{p}{q}}$. 
    \item Next, $\mathcal{G}_4^\circ $ is the band graph with underlying snake graph $G[s, s'-1] = G[1,d+6]$ and (using a left nudge) which glues on the edges labeled with $\tau_{\frac{0}{1}}$. From Theorem \ref{thm:BandMarkov},  the number of good matchings of this snake graph is $m^\circ_{(q,p)}$. 
    \item Since $s = 1$ and $t = 2d+6$, where our graph has $2d+6$ tiles, to find $\mathcal{G}_{56}$ we are in subcase 2d of case 1 in Section 3 of \cite{CS2}. Then,  $\mathcal{G}_{56}' = \overline{G}[d+7,d+1]$ where the overline denotes a reversal of the snake graph; this is a zig-zag on 6 tiles.  We have that $\mathcal{G}_{56}$ is the section of $\mathcal{G}_{56}'$ between the first edge with the same sign as $e_{s'-1}$ and the last edge with the same sign as $e_t$.  As discussed, each of these signs is $+$. However, every interior edge of $\mathcal{G}_{56}'$ has sign $-$ since this subgraph corresponds to the portion of the arc which forms a small half-circle to the left of a point on the lattice, hence closer to the right endpoints of the arcs crossed. Therefore, $\mathcal{G}_{56}$ is an empty graph with zero perfect matchings.
\end{itemize}

By Theorem 4.5 in \cite{CS2}, we conclude that $m_{(2q,2p)} = m_{\frac{p}{q}} m_{\frac{p}{q}}^\circ$. 

 Now we turn to the case for $k > 2$.  The snake graph $G_{(kq,kp)}$ has $(6+d)k -6$ tiles where each subgraph $G_{(q,p)}[(d+6)(i-1) + 1, (d+6)i - 6]$ for $1 \leq i \leq k$ is isomorphic to $G_{(q,p)}$. The connected subgraphs of the complement, of the form $G[(d+6)i - 5, (6+d)i]$  for $1 \leq i \leq k-1$, each form a zig-zag shape. Therefore, the subgraphs $G[1,(d+6)(k-1) - 6]$ and $G[d+7, \ldots, (6+d)k -6]$ are isomorphic. Call this subgraph $\mathcal{G}$; by the same reasoning as in the $k=2$ case, we see that $G_{(kq,kp)}$ has a self-overlap in the same direction, and this counts as a self-crossing. Here, we set $s = 1, t =(d+6)(k-1) - 6, s' = d+7,$ and $t' = (d+6)k -6$. The computations for $\mathcal{G}_3$ and $\mathcal{G}_4^\circ$ are similar to the $k=2$ case while the analysis for the graph $\mathcal{G}_{56}$ is different.
 
 \begin{itemize}
     \item The graph $\mathcal{G}_3$ is $G[1,t] = [1,(d+6)(k-1)-6]$, which is the same as the graph $G_{((k-1)q,(k-1)p)}$. Thus, $m(\mathcal{G}_3) = m_{((k-1)q,(k-1)p)}$.
     \item The band graph $\mathcal{G}_4^\circ$ has underlying graph $G[s,s'-1] = G[1,d+6]$ and is glued on the edges on the extreme tiles labeled with $\tau_{\frac{0}{1}}$. Just as in the $k = 2$ case, $m(\mathcal{G}_4^\circ) = m_{(q,p)}^\circ$.
     \item Since $s' < t$ for $k > 2$, we are now in Subcase 1 of Case 1 in Section 3.2 of \cite{CS2}, so $\mathcal{G}_{56} = -G[s',t] = -G[d+7,(d+6)(k-1)-6]$, where we have that $m(-G) = -m(G)$. Thus, we have that $m(\mathcal{G}_{56}) = -m_{(k-2)q,(k-2)p}$.  
 \end{itemize}
 
 Therefore, by Theorem 4.12 in \cite{CS2}, we have that \[m_{(kq,kp)} = m(G_{(kq,kp)}) = m_{((k-1)q,(k-1)p)}m_{(q,p)}^\circ -  m_{((k-2)q,(k-2)p)}.
 \]

\end{proof}

Define the family of (normalized) Chebyshev polynomials of the second kind by $U_0(x) = 1, U_1(x) = x$, and for $k \geq 2$, $U_k(x) = xU_{k-1}(x) - U_{k-2}(x)$. By repeated use of Theorem \ref{thm:Recurrence} and Theorem \ref{thm:BandMarkov} we can also express $m_{(kq,kp)}$ completely in terms of $m_{(q,p)} = m_{\fpq}$.

\begin{cor}
Let $p \leq q$ satisfy $\gcd(p,q) = 1$. Let $k \geq 1$. Then, \[
m_{(kq,kp)} = U_{k-1}(m^\circ_{(q,p)}) m_{(q,p)} = U_{k-1}(6m_{(q,p)} - 1) m_{(q,p)} .
\]
\end{cor}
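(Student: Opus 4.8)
The plan is a short induction on $k$ that feeds the three-term recurrence of Theorem~\ref{thm:Recurrence} directly into the defining recurrence for the Chebyshev polynomials $U_k$. First I would fix $p \leq q$ with $\gcd(p,q) = 1$ and abbreviate $M = m_{(q,p)} = m_{\fpq}$ and $x = m^\circ_{(q,p)}$. By Theorem~\ref{thm:BandMarkov} we have $x = 6M - 1$, so the two expressions $U_{k-1}(m^\circ_{(q,p)})\,M$ and $U_{k-1}(6m_{(q,p)} - 1)\,M$ appearing in the statement are literally identical; it therefore suffices to prove $m_{(kq,kp)} = U_{k-1}(x)\,M$ for all $k \geq 1$.

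For the base cases, $k = 1$ gives $U_0(x) = 1$ and $m_{(q,p)} = M$, which is an identity, and $k = 2$ follows from Theorem~\ref{thm:Recurrence} with the convention $m_0 = 0$: it yields $m_{(2q,2p)} = x\,M - m_0 = x\,M = U_1(x)\,M$ since $U_1(x) = x$. I would note that one can streamline the bookkeeping by declaring $U_{-1} \equiv 0$, which is exactly what running the Chebyshev recurrence one step backwards forces and which matches the convention $m_0 = 0$; with that convention only the single base case $k = 1$ is needed.

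For the inductive step I would take $k \geq 3$, assume $m_{((k-1)q,(k-1)p)} = U_{k-2}(x)\,M$ and $m_{((k-2)q,(k-2)p)} = U_{k-3}(x)\,M$, apply Theorem~\ref{thm:Recurrence}, and factor out $M$:
\[
m_{(kq,kp)} = x\, m_{((k-1)q,(k-1)p)} - m_{((k-2)q,(k-2)p)} = \bigl(x\,U_{k-2}(x) - U_{k-3}(x)\bigr)M = U_{k-1}(x)\,M,
\]
where the last equality is precisely the recurrence $U_{k-1}(x) = x\,U_{k-2}(x) - U_{k-3}(x)$ defining the $U_k$. This closes the induction, and substituting $x = 6M - 1$ recovers the second form in the statement.

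I do not expect a genuine obstacle here: all of the substantive content — deriving the recurrence $m_{(kq,kp)} = m^\circ_{(q,p)} m_{((k-1)q,(k-1)p)} - m_{((k-2)q,(k-2)p)}$ via snake-graph calculus and the evaluation $m^\circ_{(q,p)} = 6m_{(q,p)} - 1$ — is already established in Theorems~\ref{thm:Recurrence} and~\ref{thm:BandMarkov}. The only point requiring care is aligning the index conventions, namely that $m_0 = 0$ corresponds to $U_{-1} = 0$ and that Theorem~\ref{thm:Recurrence} is stated only for $k \geq 2$, so the case $k = 1$ must be checked separately; once that is pinned down the computation is immediate.
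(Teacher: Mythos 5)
Your proposal is correct and is exactly the argument the paper intends: the corollary is stated as following ``by repeated use of Theorem \ref{thm:Recurrence} and Theorem \ref{thm:BandMarkov},'' which is precisely your induction feeding the three-term recurrence into the Chebyshev recurrence, with $m^\circ_{(q,p)} = 6m_{(q,p)}-1$ supplying the second form. Your attention to the base cases and the $m_0 = 0$ (equivalently $U_{-1}=0$) convention is the only bookkeeping needed, and you have handled it correctly.
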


For example,  from Table \ref{table:GenMarkov},  $m_{3,3} = \mathcal{N}[3,5,3,53] = 846 = U_2(m^\circ_{(1,1)}) m_{(1,1)} =3 U_2(17)$.

\section*{Acknowledgements}

The first author would like to thank Gregg Musiker for the original idea to work on these generalizations of Markov numbers and Yasuaki Gyoda for discussions about the project.

  \end{document}